\numberwithin{equation}{section}
\newtheorem{theorem}{Theorem}[section]
\newtheorem{lemma}[theorem]{Lemma}
\newtheorem{question}[theorem]{Question}
\newtheorem{corollary}[theorem]{Corollary}
\newtheorem{fact}[theorem]{Fact}
\theoremstyle{definition}
\newtheorem{example}[theorem]{Example}
\newtheorem{remark}[theorem]{Remark}
\newtheorem{definition}[theorem]{Definition}
\DeclareMathOperator{\supp}{supp}
\DeclareMathOperator{\pr}{pr}
\DeclareMathOperator{\diam}{diam}
\DeclareMathOperator{\inter}{int}
\DeclareMathOperator{\graph}{graph}
\DeclareMathOperator{\dist}{dist}
\DeclareMathOperator{\Lip}{Lip}
\begin{document}

\title[]{Topological Hausdorff dimension and level sets of
generic continuous functions on fractals}

\author{Rich\'ard Balka}

\address{Alfr\'ed R\'enyi Institute of Mathematics, PO Box 127, 1364
Budapest, Hungary and Eszterh\'azy K\'aroly College, Institute of
Mathematics and Informatics, Le\'anyka u. 4., 3300 Eger, Hungary}

\email{balka.richard@renyi.mta.hu}

\thanks{Supported by the
Hungarian Scientific Research Fund grant no.~72655.}

\author{Zolt\'an Buczolich}

\address{E\"otv\"os Lor\'and University,
Institute of Mathematics, P\'azm\'any P\'eter s. 1/c, 1117 Budapest,
Hungary}

\email{buczo@cs.elte.hu}


\thanks{Research supported by the Hungarian
Scientific Research Fund grant no. K075242.}

\author{M\'arton Elekes}

\address{Alfr\'ed R\'enyi Institute of Mathematics,
PO Box 127, 1364 Budapest, Hungary and E\"otv\"os Lor\'and
University, Institute of Mathematics, P\'azm\'any P\'eter s. 1/c,
1117 Budapest, Hungary}

\email{elekes.marton@renyi.mta.hu}


\thanks{Supported by the Hungarian Scientific
Foundation grants no.~72655, 83726 and J\'anos Bolyai
Fellowship.}

\subjclass[2010]{Primary: 28A78, 28A80 Secondary: 26A99.}

\keywords{Hausdorff dimension, topological Hausdorff dimension, level sets, generic, typical continuous functions,
fractals}

\begin{abstract}
In an earlier paper we introduced a new concept of dimension for
metric spaces, the so called topological Hausdorff dimension. For a
compact metric space $K$ let $\dim_{H}K$ and $\dim_{tH} K$ denote
its Hausdorff and topological Hausdorff dimension, respectively.
We proved that this new dimension describes the Hausdorff dimension of
the level sets of the generic continuous function on $K$, namely
$\sup\{ \dim_{H}f^{-1}(y) : y \in \mathbb{R} \} = \dim_{tH} K - 1$ for the
generic $f \in C(K)$, provided that $K$ is not totally disconnected, otherwise
every non-empty level set is a singleton.
We also proved that if $K$ is not totally disconnected and sufficiently
homogeneous then $\dim_{H}f^{-1}(y) = \dim_{tH} K - 1$ for the
generic $f \in C(K)$ and the generic $y \in f(K)$. The most
important goal of this paper is to make these theorems more precise.

As for the first result, we prove that the supremum is actually attained
on the left hand side of the first equation above, and also show that
there may only be a unique level set of maximal Hausdorff dimension.

As for the second result, we characterize those
compact metric spaces for which for the generic $f\in C(K)$ and the
generic $y\in f(K)$ we have $\dim_{H} f^{-1}(y)=\dim_{tH}K-1$. We also
generalize a result of B. Kirchheim by showing that if $K$ is
self-similar then for the generic $f\in C(K)$ for every $y\in \inter
f(K)$ we have $\dim_{H} f^{-1}(y)=\dim_{tH}K-1$.

Finally, we prove that the graph of the generic $f\in C(K)$ has the
same Hausdorff and topological Hausdorff dimension as $K$.
\end{abstract}

\maketitle

\section{Introduction}

 We recall first the definition of the
(small inductive) topological dimension.

\begin{definition} Set $\dim _{t} \emptyset = -1$. The \emph{topological dimension}
of a non-empty metric space $X$ is defined by induction as
$$
\dim_{t} X=\inf\{d: X \textrm{ has a basis } \mathcal{U}  \textrm{ s. t. } \dim_{t} \partial {U} \leq d-1 \textrm{ for every } U\in \mathcal{U} \}.
$$
\end{definition}

For more information on this concept see \cite{Eng} or \cite{HW}.

We introduced the topological Hausdorff dimension for compact metric
spaces in \cite{BBE}. It is  defined analogously to the topological
dimension. However, it is not inductive, and it can attain
non-integer values as well. The Hausdorff dimension of a metric
space $X$ is denoted by $\dim_{H} X$, see e.g.~\cite{F} or
\cite{Ma}. In this paper we adopt the convention that
$\dim_{H}\emptyset = -1$.

\begin{definition}\label{deftoph}
Set $\dim _{tH} \emptyset=-1$. The \emph{topological Hausdorff
dimension} of a non-empty metric space $X$ is defined as
\[
\dim_{tH} X=\inf\{d:
 X \textrm{ has a basis } \mathcal{U} \textrm{ s. t. } \dim_{H} \partial {U} \leq d-1 \textrm{ for every }
 U\in \mathcal{U} \}.
\]
\end{definition}

Both notions of dimension can attain the value $\infty$ as well.

\bigskip

Let $K$ be a compact metric space, and let $C(K)$ denote the space
of continuous real-valued functions equipped with the supremum norm.
Since this is a complete metric space, we can use Baire category
arguments. If $\dim_{t}K=0$ then the generic $f\in C(K)$ is
well-known to be one-to-one (see Lemma \ref{one-to-one}), so every non-empty
level set is a singleton.

Assume $\dim_{t}K>0$. The following results from \cite{BBE} show the
connection between the topological Hausdorff dimension and the level
sets of the generic $f\in C(K)$.

\begin{theorem} \label{ft}
If $K$ is a compact metric space with $\dim_{t}K>0$ then for the
generic $f\in C(K)$
\begin{enumerate}[(i)]
\item $\dim_{H} f^{-1} (y)\leq \dim_{tH} K-1$ for every $y\in \mathbb{R}$,
\item for every $d<\dim_{tH} K$ there exists a non-degenerate interval $I_{f,d}$
such that $\dim_{H} f^{-1} (y)\geq d- 1$ for every $y\in I_{f,d}$.
\end{enumerate}
\end{theorem}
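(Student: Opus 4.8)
The plan is to treat the two parts by separate Baire category arguments, writing $d_0=\dim_{tH}K$ and noting that $\dim_t K>0$ forces $d_0\ge 1$, so the target exponents $d_0-1$ are nonnegative. For the upper bound \emph{(i)} I would fix a rational $d>d_0$ and prove that $\mathcal G_d=\{f\in C(K): \dim_H f^{-1}(y)\le d-1 \text{ for all }y\in\mathbb{R}\}$ is comeager; intersecting over a sequence $d\downarrow d_0$ then yields \emph{(i)}. By Definition \ref{deftoph} there is a countable basis $\mathcal U=\{U_n\}$ with $\dim_H\partial U_n\le d-1$ for all $n$, and by compactness I may extract finite subcovers at arbitrarily fine scales. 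The heart of the matter is an approximation lemma: given $f$ and $\varepsilon>0$, construct $g$ with $\|f-g\|_\infty<\varepsilon$ whose every level set is contained in a countable union $B=\bigcup_{k}\bigcup_i \partial U_i^{(k)}$ of boundary skeletons, taken over a sequence of ever finer finite subcovers. Since $\dim_H B\le d-1$, all level sets of $g$ then obey the bound. A single subdivision cannot achieve this, since every point is interior to arbitrarily small basis elements and so lies in no boundary; hence the construction must be genuinely multi-scale, perturbing $g$ at stage $k$ by a summable amount on the cells of the $k$-th subcover so that, transverse to the skeleton, $g$ is forced to vary and each fixed level value is migrated onto a coarser piece of the skeleton, after which one passes to the uniform limit.

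To upgrade density to comeagerness I would make the trapping robust: arrange the construction so that some $\eta>0$ guarantees that every $h$ with $\|h-g\|_\infty<\eta$ still has all level sets inside a fixed neighbourhood-version of $B$, whence $\mathcal G_d$ contains a dense open set. I expect the main obstacle of \emph{(i)} to be exactly this iterated construction together with its robustness: controlling how level values are pushed onto the skeleton across infinitely many scales while keeping the perturbations summable and the resulting bound stable under small changes of the function.

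For the lower bound \emph{(ii)} the key tool is a separator characterization of the topological Hausdorff dimension: for $d<d_0$ there exist disjoint nonempty closed sets $E_0,E_1\subseteq K$ such that every closed set $S$ separating them (i.e.\ $K\setminus S=G_0\sqcup G_1$ with $G_0,G_1$ open and $E_i\subseteq G_i$) satisfies $\dim_H S\ge d-1$; if this is not yet available I would first derive it from Definition \ref{deftoph} by the usual boundary-bootstrapping argument. The mechanism is then immediate: for any continuous $f$ with $\max_{E_0}f<\min_{E_1}f$ and any $y$ strictly between these two numbers, the sets $\{f<y\}$ and $\{f>y\}$ are disjoint open sets covering $K\setminus f^{-1}(y)$ and containing $E_0$ and $E_1$, so $f^{-1}(y)$ separates $E_0$ from $E_1$ and therefore $\dim_H f^{-1}(y)\ge d-1$. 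Thus any value-gap of $f$ over such a pair immediately produces the desired interval $I_{f,d}$.

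The difficulty is that $E_0,E_1$ are fixed whereas $f$ is arbitrary, and a small perturbation cannot force $\max_{E_0}f<\min_{E_1}f$. I would resolve this by localizing: using that $\dim_{tH}$ persists on suitable subpieces, produce a fixed \emph{countable} family of hard-to-separate pairs, one attached to each pair of basic small regions of $K$, and then show that the comeager set of $f$ that are non-constant on every basic region already straddles one such pair by a definite gap, by choosing two small regions on which $f$ takes separated values and invoking the pair attached to them. Matching an arbitrary generic function's oscillation to a pre-chosen localized separator-pair is, I expect, the principal obstacle of \emph{(ii)}; once a gapped ordered pair is found the preceding mechanism yields $I_{f,d}$, and intersecting the resulting comeager sets over rational $d\uparrow d_0$ completes the proof.
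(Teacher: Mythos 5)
A preliminary remark: this paper does not actually prove Theorem \ref{ft}; it is imported from \cite{BBE}, so your proposal is measured against the argument given there.

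Part (i) rests on an impossible goal. You want $g$ near $f$ such that \emph{every} level set of $g$ lies in a single countable union $B=\bigcup_k\bigcup_i\partial U_i^{(k)}$ of basis boundaries with $\dim_H B\le d-1$. Since $K=\bigcup_{y}g^{-1}(y)$, this forces $K\subseteq B$, hence $\dim_H K\le d-1$; letting $d\downarrow\dim_{tH}K$ and using $\dim_{tH}K\ge\dim_t K\ge 1$ together with Theorem \ref{<} yields $\dim_H K\le\dim_{tH}K-1\le\dim_H K-1$, a contradiction. Letting $B$ depend on $y$ does not help, as all boundaries are drawn from one countable basis. So no multi-scale iteration can deliver the trapping you describe; the property is false for every function on $K$, continuous or not. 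The actual mechanism (in \cite{BBE}, following Kirchheim) is a content estimate, not an inclusion: for a cover $U_1,\dots,U_k$ at scale $1/n$ with disjointified cells $V_i$, one perturbs $f$ so that each level set meets the interiors of only a controlled number of cells, whence $\mathcal{H}^{d-1}_{\infty}\bigl(f^{-1}(y)\setminus\bigcup_i\partial U_i\bigr)$ is small; intersecting over $n$ gives $\mathcal{H}^{s}\bigl(f^{-1}(y)\setminus B\bigr)=0$ for every $s>d-1$, while the exceptional parts vary with $y$ and are allowed to cover most of $K$. This idea is absent from your proposal, and without it part (i) does not get off the ground.

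Part (ii) is essentially the correct argument. The separator characterization does follow from Definition \ref{deftoph}: if every pair $\bigl(B(x,r/2),\,K\setminus U(x,r)\bigr)$ admitted a closed separator of Hausdorff dimension at most $d-1$, the resulting clopen-in-the-complement pieces would form a basis witnessing $\dim_{tH}K\le d$; and a level set $f^{-1}(y)$ with $y$ strictly between the values on $E_0$ and $E_1$ is such a separator. Your proposed fix for the straddling problem, however, is the wrong mechanism: for a \emph{fixed} pair $(E_0,E_1)$ the straddling condition fails on a nonempty open set of $f$ (pin two values of $f$ on each of $E_0$ and $E_1$ so that the images interleave robustly), so no a priori generic property such as non-constancy on basic regions can be matched to a pre-assigned pair. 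What works is to localize the hard pairs inside balls $B(x_0,\rho)$ of arbitrarily small radius around a point $x_0$ with $\dim_{tH}B(x_0,\rho)>d$ for every $\rho$ (such $x_0$ exists by countable stability; cf. Definition \ref{suppd} and Remark \ref{rsupp}), take $\rho$ so small that $f_0$ oscillates by less than $\varepsilon/2$ on $B(x_0,\rho)$, and then perturb $f_0$ by less than $\varepsilon$ via Tietze to open a gap of definite width between the values on $E_0$ and on $E_1$; the gap survives all sufficiently small further perturbations, so the straddling set is open and dense and the interval $I_{f,d}$ is produced exactly as you describe.
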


\begin{corollary} \label{sup}
If $K$ is a compact metric space with $\dim_t K > 0$ then for the
generic $f \in C(K)$
$$\sup\{ \dim_{H}f^{-1}(y) : y \in \mathbb{R} \} = \dim_{tH} K - 1.$$
\end{corollary}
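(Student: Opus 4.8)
The plan is to derive Corollary \ref{sup} directly from Theorem \ref{ft}, which already gives matching upper and lower bounds for the level sets of the generic $f \in C(K)$. The strategy is to show that the two parts of the theorem pinch the supremum from above and below, and that both inequalities hold simultaneously for the generic function (this is immediate, since the generic $f$ satisfies both (i) and (ii), the intersection of two residual sets being residual).

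First I would establish the upper bound $\sup\{\dim_H f^{-1}(y) : y \in \mathbb{R}\} \leq \dim_{tH} K - 1$. This follows at once from part (i) of Theorem \ref{ft}: since $\dim_H f^{-1}(y) \leq \dim_{tH} K - 1$ holds for \emph{every} $y \in \mathbb{R}$, the supremum over all $y$ cannot exceed $\dim_{tH} K - 1$. No further work is needed here.

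Next I would establish the reverse inequality $\sup\{\dim_H f^{-1}(y) : y \in \mathbb{R}\} \geq \dim_{tH} K - 1$. For each $d < \dim_{tH} K$, part (ii) supplies a non-degenerate interval $I_{f,d}$ on which $\dim_H f^{-1}(y) \geq d - 1$, so in particular the supremum is at least $d - 1$. Letting $d \uparrow \dim_{tH} K$ (for instance along a sequence $d_n \to \dim_{tH} K$) yields $\sup\{\dim_H f^{-1}(y) : y \in \mathbb{R}\} \geq \dim_{tH} K - 1$. One mild point to keep in mind is the case $\dim_{tH} K = \infty$, where the same limiting argument shows the supremum is infinite; this should be noted but presents no real difficulty.

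The main (and indeed only) obstacle is the bookkeeping of genericity: I must confirm that a single residual set of functions satisfies both conclusions at once. Since part (i) holds for the generic $f$ and part (ii) holds for the generic $f$, the set of $f$ satisfying both is the intersection of two residual sets, hence residual, so both inequalities hold for the generic $f$ simultaneously and the equality in Corollary \ref{sup} follows. There is essentially no deep content beyond Theorem \ref{ft}; the corollary is a formal consequence, so the proof will be short.
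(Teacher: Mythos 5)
Your proposal is correct and is essentially the paper's own derivation: the corollary is an immediate formal consequence of Theorem \ref{ft}, with part (i) giving the upper bound for every $y$ and part (ii), applied along a sequence $d_n \nearrow \dim_{tH} K$, giving the lower bound, the relevant co-meager sets being intersected (countably many, not just two, but this changes nothing). No gaps.
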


If $K$ is also sufficiently homogeneous, for example self-similar,
then we can actually say more.

\begin{theorem} \label{self-sim}
If $K$ is a self-similar compact metric space with $\dim_t K > 0$
then for the generic $f \in
C(K)$ and the generic $y \in f(K)$
$$\dim_{H}f^{-1}(y) = \dim_{tH} K - 1.$$
\end{theorem}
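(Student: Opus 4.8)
The plan is as follows. Since $K$ is self-similar, write $K = \bigcup_{i=1}^{m} S_i(K)$ where the $S_i$ are contracting similarities, and for a finite word $\omega = (i_1,\dots,i_k)$ let $K_\omega = S_{i_1}\circ\cdots\circ S_{i_k}(K)$ denote the corresponding piece. Each $K_\omega$ is similar to $K$, hence is itself a self-similar compact metric space with $\dim_t K_\omega = \dim_t K > 0$ and $\dim_{tH} K_\omega = \dim_{tH} K$; moreover every point of $K$ lies (by symbolic coding) in pieces $K_\omega$ of arbitrarily small diameter. By Theorem \ref{ft}(i) the upper bound $\dim_H f^{-1}(y) \le \dim_{tH}K - 1$ already holds for every $y$ and the generic $f$, so the whole content is the matching lower bound for the generic $y \in f(K)$, where genericity is taken in the Baire sense in the compact (hence Baire) space $f(K)$.

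The key point is to transfer Theorem \ref{ft} from $C(K)$ to each $C(K_\omega)$. The restriction map $\rho_\omega \colon C(K) \to C(K_\omega)$, $\rho_\omega(f) = f|_{K_\omega}$, is a bounded linear surjection (surjectivity by Tietze), hence open by the open mapping theorem; consequently $\rho_\omega$ pulls back open dense sets to open dense sets, and thus comeager sets to comeager sets. Applying Theorem \ref{ft} in the space $C(K_\omega)$ and pulling back, we obtain that for the generic $f \in C(K)$ the following holds simultaneously for all (countably many) finite words $\omega$: for every $d < \dim_{tH} K$ there is a non-degenerate interval $I_{\omega,d} \subseteq f(K_\omega)$ with $\dim_H (f|_{K_\omega})^{-1}(y) \ge d-1$ for every $y \in I_{\omega,d}$. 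Since $(f|_{K_\omega})^{-1}(y) \subseteq f^{-1}(y)$, this yields $\dim_H f^{-1}(y) \ge d-1$ for all $y \in I_{\omega,d}$.

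Next I would show that for each fixed $d$ with $0 < d < \dim_{tH} K$ the union $U_d = \bigcup_\omega \inter_{f(K)} I_{\omega,d}$ is dense in $f(K)$. Given $y_0 = f(x_0) \in f(K)$ and $\varepsilon > 0$, uniform continuity of $f$ provides $\delta > 0$ with $f(B(x_0,\delta)) \subseteq (y_0-\varepsilon, y_0+\varepsilon)$; choosing $\omega$ with $x_0 \in K_\omega$ and $\diam K_\omega < \delta$ forces $f(K_\omega) \subseteq (y_0-\varepsilon, y_0+\varepsilon)$, hence $\emptyset \ne I_{\omega,d} \subseteq (y_0-\varepsilon, y_0+\varepsilon)$. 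As $I_{\omega,d}$ is a non-degenerate interval contained in $f(K)$, its interior in $f(K)$ is a nonempty relatively open set meeting the $\varepsilon$-ball around $y_0$, which proves density. Thus $U_d$ is open and dense in $f(K)$, so it is comeager, and $\dim_H f^{-1}(y) \ge d-1$ on $U_d$.

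Finally, fix a sequence $d_n \uparrow \dim_{tH} K$ with $d_n > 0$ and put $G = \bigcap_n U_{d_n}$. Being a countable intersection of comeager sets in the Baire space $f(K)$, the set $G$ is comeager, and for $y \in G$ we have $\dim_H f^{-1}(y) \ge d_n - 1$ for all $n$, whence $\dim_H f^{-1}(y) \ge \dim_{tH} K - 1$. Together with the upper bound from Theorem \ref{ft}(i) this gives $\dim_H f^{-1}(y) = \dim_{tH} K - 1$ for the generic $f$ and the generic $y \in f(K)$. The step I expect to be the main obstacle is the genericity transfer: one must ensure that being generic on all the pieces $K_\omega$ simultaneously is still a generic property on $C(K)$, which is exactly what the openness of the restriction maps $\rho_\omega$ secures; self-similarity is used precisely to guarantee that every piece carries the same $\dim_{tH}$ and that the good intervals can be placed at arbitrarily small scales around any prescribed value, and once these are in hand the density argument and the final Baire-category bookkeeping are routine.
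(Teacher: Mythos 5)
Your proof is correct, but note that the paper itself does not prove Theorem \ref{self-sim}: it is imported from \cite{BBE} (the implication $(ii)\Rightarrow(i)$ of Theorem \ref{tip} is likewise dispatched with a reference to \cite[Thm. 6.22.]{BBE}), so the closest in-paper argument to compare with is the proof of the stronger Theorem \ref{thinty}. Your genericity-transfer step is precisely Corollary \ref{catlem2}$(ii)$ (restrictions are continuous, open and surjective by Tietze, so preimages of co-meager sets are co-meager), and your overall strategy --- apply Theorem \ref{ft}$(ii)$ on countably many small similar copies $K_\omega$, pull back, and observe that the resulting good intervals accumulate near every value of $f$, giving a dense open set of good levels at each scale $d_n$ --- is the natural ``generic $y$'' version of what the paper does for the harder ``every $y\in\inter f(K)$'' statement. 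Theorem \ref{thinty} must instead localize with balls and bi-Lipschitz copies $\phi_i\colon K_i\to K$, transport a second-category set $\mathcal{H}_n$ by conjugation $\psi_i\circ f\circ\phi_i$, invoke Lemma \ref{cpt} to identify $\inter f(K)$ with $\bigcup_{C}\inter f(C)$, and use descriptive-set-theoretic input (Lemma \ref{borel}, co-analyticity, the Baire property) because the relevant sets of functions are no longer visibly open or dense; your route avoids all of this at the price of the weaker conclusion, which is exactly what Theorem \ref{self-sim} asks for. Two small points to make explicit: the containment $I_{\omega,d}\subseteq f(K_\omega)$ is not part of the statement of Theorem \ref{ft}$(ii)$, but for $d>0$ it follows from the convention $\dim_{H}\emptyset=-1$, since $\dim_{H}(f|_{K_\omega})^{-1}(y)\geq d-1>-1$ forces the level set to be non-empty (this is where your restriction to $d_n>0$, legitimate because $\dim_{tH}K\geq\dim_{t}K\geq 1$, is actually used); and the final equality requires intersecting your co-meager set of $f$ with the one from Theorem \ref{ft}$(i)$ supplying the upper bound for all $y$.
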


Theorems \ref{ft} and \ref{self-sim} are the starting points of this
paper, our primary aim is to make these theorems more precise.

\bigskip

In the Preliminaries section we introduce some notation and
definitions, cite some important properties of the topological
Hausdorff dimension, and prove several technical lemmas.

\bigskip

In Section 3 we prove a partial converse of Theorem \ref{self-sim}.
We show that for the generic $f\in C(K)$ for the generic $y\in f(K)$
we have $\dim_{H} f^{-1}(y)=\dim_{tH}K-1$ iff $K$ is homogeneous for
the topological Hausdorff dimension, that is for every non-empty
closed ball $B(x,r)\subseteq K$ we have $\dim_{tH}
B(x,r)=\dim_{tH}K$. If $K$ is (weakly) self-similar then much more
is true: For the generic $f\in C(K)$ for every $y\in \inter f(K)$ we
have $\dim_{H} f^{-1}(y)=\dim_{tH}K-1$. This generalizes a result of
B. Kirchheim. He proved in \cite{BK} that for the generic $f\in
C\left([0,1]^{d}\right)$ for every $y\in \inter f\left([0,1]^{d}
\right)$ we have $\dim_{H} f^{-1}(y)=d-1$.

\bigskip

In Section 4 we prove that the generic $f\in C(K)$ has at least one
level set of maximal Hausdorff dimension.
Hence the  supremum is attained in
Corollary \ref{sup}. We construct an attractor of an iterated
function system $K\subseteq \mathbb{R}^{2}$ such that the generic $f\in
C(K)$ has a unique level set of Hausdorff dimension $\dim_{tH}K-1$.
This shows that the above theorem is sharp.

\bigskip

Finally, in Section 5 we prove that the graph of the generic $f\in
C(K)$ has the same Hausdorff and topological Hausdorff dimension as
$K$. This generalizes a result of R. D. Mauldin and S. C. Williams
which states that the graph of the generic $f\in
C\left([0,1]\right)$ is of Hausdorff dimension one, see \cite{MW}.

\section{Preliminaries}

\subsection{Notation and definitions}

Let $(X,d)$ be a metric space, and let $A,B\subseteq X$ be arbitrary
sets. We denote by $\inter A$ and $\partial A$ the interior and
boundary of $A$. The diameter of $A$ is denoted by $\diam A$. We use
the convention $\diam \emptyset = 0$. The distance of the sets $A$
and $B$ is defined by $\dist (A,B)=\inf \{d(x,y): x\in A, \, y\in
B\}$. Let $B(x,r)=\{y\in X: d(x,y)\leq r\}$ and $U(x,r)=\{y\in X:
d(x,y)< r\}$. More generally, we define $B(A,r) = \{x \in X :
\dist(x,A) \le r \}$ and $U(A,r) = \{x \in X : \dist(x,A) < r \}$.

For two metric spaces $(X,d_{X})$ and $(Y,d_{Y})$ a function
$f\colon X\to Y$ is \emph{Lipschitz} if there exists a constant $C
\in \mathbb{R}$ such that $d_{Y}(f(x_{1}),f(x_{2}))\leq C \cdot
d_{X}(x_{1},x_{2})$ for all $x_{1},x_{2}\in X$. The smallest such
constant $C$ is called the Lipschitz constant of $f$ and denoted by
$\Lip(f)$. If $\Lip(f)<1$ then $f$ is a \emph{contraction}. A
function $f\colon X\to Y$ is called \emph{bi-Lipschitz}  if $f$ is a
bijection and both $f$ and $f^{-1}$ are Lipschitz.

If $s \ge 0$ and $\delta>0$, then
$$ \mathcal{H}^{s}_{\delta}(X)=\inf \left\{ \sum_{i=1}^\infty
(\diam U_{i})^{s}: X \subseteq \bigcup_{i=1}^{\infty} U_{i},\  \forall i\,  \diam U_i
\le \delta \right\},$$
$$ \mathcal{H}^{s}(X)=\lim_{\delta\to 0+}\mathcal{H}^{s}_{\delta}(X).$$
The \emph{Hausdorff dimension of $X$} is defined as
$$ \dim_{H} X =
\inf\left\{s \ge 0: \mathcal{H}^{s}(X) =0\right\},$$
we adopt the convention that $\dim_{H}\emptyset=-1$ throughout the
paper. For more information on these concepts see \cite{F} or
\cite{Ma}.

We define on $X\times Y$ the following metric. For all
$(x_{1},y_{1}),(x_{2},y_{2})\in X\times Y$ set
$$d_{X\times
Y}((x_{1},y_{1}),(x_{2},y_{2}))=
\sqrt{d^{2}_{X}(x_1,x_2)+d^{2}_{Y}(y_1,y_2)}.$$

The metric space $X$ is \emph{totally disconnected} if every
connected component is a singleton.

Let $X$ be a \emph{complete} metric space. A set is \emph{somewhere
dense} if it is dense in a non-empty open set, and otherwise it is
called \emph{nowhere dense}. We say that $M \subseteq X$ is
\emph{meager} if it is a countable union of nowhere dense sets, and
a set is of \emph{second category} if it is not meager. A set is
called \emph{co-meager} if its complement is meager. By Baire's
Category Theorem co-meager sets are dense. It is not difficult to show
that a set is co-meager iff it contains a dense $G_\delta$ set. We
say that the \emph{generic} element $x \in X$ has property $\mathcal{P}$ if
$\{x \in X : x \textrm{ has property } \mathcal{P} \}$ is co-meager. The
term `\emph{typical}' is also used instead of `generic'. Our main
example will be $X = C(K)$ endowed with the supremum metric (for
some compact metric space $K$).

Let $X$, $Y$ be Polish spaces. We call the set $A\subseteq X$
\emph{analytic}, if it is a continuous image of a Polish space. We
call it \emph{co-analytic} if its complement is analytic. The set
$A$ has the \emph{Baire property} if $A=U\Delta M$ where $U$ is open
and $M$ is meager. Both analytic and co-analytic sets have the Baire
property. If a set is of second category in every non-empty open set
and has the Baire property then it is co-meager. If $E\subseteq
X\times Y$, $x\in X$ and $y\in Y$ then let $E_x=\{y\in Y: (x,y)\in
E\}$ and $E^{y}=\{x\in X: (x,y)\in E\}$. Let $\pr_{X}\colon X\times
Y\to X$, $\pr_X(x,y)=x$ be the projection of $X\times Y$ onto $X$.
If $E\subseteq X\times Y$ is Borel, then $\pr_{X}(E)$ is analytic.
For more information see \cite{Ke}.

If $K$ is a non-empty compact metric space then we say that $K$ is
an \emph{attractor of an iterated function system} (IFS) if there
exist contractions $\Psi_{i}\colon K\rightarrow K$, $i\in \{1,\dots
,m\}$ such that $K=\bigcup _{i=1} ^{m} \Psi _{i}(K)$. If the
$\Psi_{i}$'s are also similarities then $K$ is \emph{self-similar}.

For every $\alpha \in (0,1)$ we construct the \emph{middle-$\alpha$
Cantor set} $C_{\alpha}$ in the following way. In the first step we
remove the middle-$\alpha$ open interval $((1-\alpha)/2,(1+\alpha)/2)$ from $[0,1]$. After the $(n-1)$st step
we have $2^{n-1}$ disjoint, closed $(n-1)$st level intervals. In the
$n$th step we remove the middle-$\alpha$ open intervals from each of
them. We continue this procedure for all $n\in \mathbb{N}^{+}$, and the
limit set is the middle-$\alpha$ Cantor set. It is well-known that
$\dim_{H} C_{\alpha}=\log 2/\log (2/(1-\alpha))$.

Let us define the \emph{Smith-Volterra-Cantor set} $S$ in the
following way. In the first step we remove the open interval of
length $ 1/4 $ from the middle of $[0,1]$. After the $(n-1)$st step
we have $2^{n-1}$ disjoint, closed $(n-1)$st level intervals. In
the $n$th step we remove the middle open intervals of length
$1/2^{2n}$ from each of them. We continue this procedure for all
$n\in \mathbb{N}^{+}$, and the limit set is the Smith-Volterra-Cantor set.
Elementary computation shows that $S$ has positive Lebesgue measure
(more precisely its measure is $1/2$).

The \emph{$n$th level elementary pieces} of $C_{\alpha}$ are the
intersections of $C_{\alpha}$ with the $n$th level intervals of
$C_{\alpha}$. This definition is also analogous for $S$.

We adopt the convention that \emph{intervals} are non-degenerate.

\subsection{Properties of the topological Hausdorff dimension}

The next theorems are from \cite{BBE}.

\begin{fact} \label{equiv} For every metric space $X$
$$\dim_{tH} X=0 \Longleftrightarrow \dim_{t} X=0.$$  \end{fact}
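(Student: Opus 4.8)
The plan is to reduce both sides of the equivalence to one common combinatorial condition. Concretely, I would prove that for a non-empty metric space $X$ each of the statements $\dim_t X = 0$ and $\dim_{tH} X = 0$ is equivalent to the assertion that $X$ admits a basis $\mathcal{U}$ consisting of clopen sets, i.e.\ a basis with $\partial U = \emptyset$ for every $U \in \mathcal{U}$. Granting this, the Fact follows immediately: if $X \ne \emptyset$ then both dimensions equal $0$ exactly when $X$ has a clopen basis, and if $X = \emptyset$ then by convention $\dim_t X = \dim_{tH} X = -1$, so both equalities fail and the equivalence holds vacuously.

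The single observation driving both reductions is that $\dim_H A \ge 0$ (respectively $\dim_t A \ge 0$) for every non-empty $A$, while $\dim_H \emptyset = \dim_t \emptyset = -1$; hence any requirement forcing a boundary to have dimension strictly below $0$ is equivalent to that boundary being empty. I would run the argument for $\dim_{tH}$ and note that the case of $\dim_t$ is verbatim the same with $\dim_t$ in place of $\dim_H$. For the forward direction, suppose $\dim_{tH} X = 0$. Since this infimum equals $0$, some value $d \in (0,1)$ belongs to the defining set of Definition~\ref{deftoph}, so $X$ has a basis $\mathcal{U}$ with $\dim_H \partial U \le d - 1 < 0$ for every $U \in \mathcal{U}$; by the observation above this forces $\partial U = \emptyset$, so $\mathcal{U}$ is a clopen basis. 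For the converse, if $X$ is non-empty and has a clopen basis $\mathcal{U}$, then $\dim_H \partial U = \dim_H \emptyset = -1 \le -1$, so the value $d = 0$ lies in the defining set; moreover no $d < 0$ can, since that would demand $\dim_H \partial U \le d-1 < -1$, which is impossible even for the empty boundary. Therefore the infimum is exactly $0$, i.e.\ $\dim_{tH} X = 0$.

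The only point requiring care, and the nearest thing to an obstacle, is the handling of the infimum in the two definitions: one must check that ``$\dim_{tH} X = 0$'' genuinely yields a clopen basis rather than merely a sequence of bases whose boundary dimensions tend to $0$. This is precisely what the choice of a single witnessing $d \in (0,1)$ settles, since the threshold $\dim_H \partial U \le d-1$ already collapses to $\partial U = \emptyset$ for any such $d$; the integer-valuedness of $\dim_t$ makes the analogous step for topological dimension even more transparent. Once this is in place, no appeal to the inductive machinery of $\dim_t$ beyond its definition is needed, and the proof is complete.
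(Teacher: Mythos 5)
Your argument is correct. Note that the paper itself gives no proof of Fact~\ref{equiv}; it is quoted from the earlier paper \cite{BBE}, so there is nothing to compare line by line. Your reduction of both sides to the existence of a clopen basis is the standard (and essentially the only) argument: since $\dim_H A\geq 0$ and $\dim_t A\geq 0$ for non-empty $A$ while both equal $-1$ on $\emptyset$, any witness $d<1$ in either defining infimum forces every basic boundary to be empty, and conversely a clopen basis puts $d=0$ in each defining set while no $d<0$ can belong to it. You also correctly dispose of the empty space via the conventions $\dim_t\emptyset=\dim_{tH}\emptyset=-1$, so the proof is complete as written.
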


\begin{theorem} \label{<} For every metric space $X$
$$\dim_{t}X\leq \dim_{tH} X \leq \dim _{H} X.$$  \end{theorem}

\begin{theorem} \label{prop} The topological Hausdorff dimension satisfies
the following properties.
\begin{enumerate}[(i)]
\item \textbf{Extension of the classical dimension.} The topological Hausdorff dimension of a
countable set equals zero, and for open subspaces of $\mathbb{R}^{d}$ and
for smooth $d$-dimensional manifolds the topological Hausdorff
dimension equals $d$.
\item \textbf{Monotonicity.}  If $X\subseteq Y$ are metric
spaces then $\dim_{tH} X \leq \dim_{tH} Y$.
\item \textbf{Lipschitz-invariance.}  Let $X,Y$ be metric spaces. If $f\colon X\to Y$ is a
Lipschitz homeomorphism then $\dim_{tH}Y\leq \dim_{tH}X$. If $f$ is
bi-Lipschitz then $\dim_{tH}X=\dim_{tH}Y$.
\item  \textbf{Countable stability for closed sets.} Let $X$ be a separable metric
space and $X=\bigcup_{n\in \mathbb{N}} \ X_{n}$, where $X_{n},$ $n\in \mathbb{N}$ are
closed subsets of $X$. Then $\dim_{tH} X=\sup_{n\in \mathbb{N}}
\dim_{tH}X_{n}$.
\end{enumerate}
 \end{theorem}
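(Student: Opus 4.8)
The plan is to read off all four items from the definition, using only two elementary facts about Hausdorff dimension—that it is monotone under inclusion and does not increase under Lipschitz maps—together with Theorem \ref{<} and Fact \ref{equiv}. Monotonicity and Lipschitz-invariance are the quickest. For monotonicity, given $X\subseteq Y$ and a basis $\mathcal{U}$ of $Y$ witnessing $\dim_{tH}Y<d$, I would pass to the traces $\{U\cap X:U\in\mathcal{U}\}$, which form a basis of $X$, and invoke the standard inclusion $\partial_X(U\cap X)\subseteq X\cap\partial_Y U$, so that $\dim_H\partial_X(U\cap X)\le\dim_H\partial_Y U\le d-1$; hence $\dim_{tH}X\le d$. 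For Lipschitz-invariance, if $f\colon X\to Y$ is a Lipschitz homeomorphism and $\mathcal{U}$ witnesses $\dim_{tH}X<d$, then $\{f(U):U\in\mathcal{U}\}$ is a basis of $Y$, one has $\partial f(U)=f(\partial U)$ because $f$ is a homeomorphism, and $\dim_H f(\partial U)\le\dim_H\partial U\le d-1$ since Lipschitz maps do not raise Hausdorff dimension; this gives $\dim_{tH}Y\le\dim_{tH}X$, and applying the same argument to $f^{-1}$ in the bi-Lipschitz case yields equality.

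For the extension property the countable case is immediate from Fact \ref{equiv}, since a countable metric space has $\dim_t=0$ and therefore $\dim_{tH}=0$. For an open set $G\subseteq\mathbb{R}^d$ I would bound $\dim_{tH}G\le d$ using the basis of Euclidean balls $B(x,r)\subseteq G$, whose boundaries are $(d-1)$-spheres with $\dim_H\partial B=d-1$; the reverse inequality $\dim_{tH}G\ge\dim_t G=d$ follows from Theorem \ref{<} and the classical computation $\dim_t G=d$. For a smooth $d$-dimensional manifold I would cover it by countably many closed coordinate balls, each bi-Lipschitz to a closed Euclidean ball and hence of topological Hausdorff dimension $d$ by Lipschitz-invariance and monotonicity, and then patch with countable stability; the lower bound again comes from $\dim_t=d$ together with Theorem \ref{<}.

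The main obstacle is countable stability, and there only the inequality $\dim_{tH}X\le\sup_n\dim_{tH}X_n$ needs work, the reverse being immediate from monotonicity. My plan is to route through the characterization that, for a separable metric space, $\dim_{tH}X\le d$ if and only if there is a set $C\subseteq X$ with $\dim_H C\le d-1$ whose complement $X\setminus C$ is topologically zero-dimensional. Its easy half is clear: starting from a countable basis witnessing $\dim_{tH}X<d'$ for $d'$ slightly above $d$, the union $C$ of the boundaries satisfies $\dim_H C\le d-1$ by countable stability of Hausdorff dimension, and the traces of the basis equip $X\setminus C$ with a clopen basis, so it is zero-dimensional. Granting the characterization, the sum theorem falls out cleanly: fixing $\varepsilon>0$, for each $n$ I pick $C_n\subseteq X_n$ with $\dim_H C_n<d-1+\varepsilon$ and $X_n\setminus C_n$ zero-dimensional, set $C=\bigcup_n C_n$ so that $\dim_H C\le d-1+\varepsilon$, and observe that $X\setminus C=\bigcup_n(X_n\setminus C)$ is a countable union of sets that are closed in $X\setminus C$ (each $X_n$ being closed in $X$) and zero-dimensional (being subspaces of the zero-dimensional $X_n\setminus C_n$); the classical countable sum theorem for the topological dimension then gives $\dim_t(X\setminus C)=0$, and the characterization yields $\dim_{tH}X\le d+\varepsilon$. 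Letting $\varepsilon\to 0$ finishes the argument.

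The genuinely hard part is the nontrivial direction of the characterization: from a set $C$ of small Hausdorff dimension with zero-dimensional complement one must manufacture an honest basis of $X$ all of whose boundaries have Hausdorff dimension at most $d-1$. The idea is to exploit the zero-dimensionality of $Z=X\setminus C$ to produce around each point clopen-in-$Z$ sets $W$ whose $X$-closures, together with the $X$-closure of $Z\setminus W$, can only meet inside $C$; separating along these and thickening to open subsets of $X$ forces the resulting boundaries into $C$ and hence keeps their Hausdorff dimension $\le d-1$. Carrying this out uniformly—especially producing suitable basic neighborhoods at the points of $C$ itself, where there is little room to maneuver—is where the technical effort concentrates, and this is the step I expect to be the crux of the proof.
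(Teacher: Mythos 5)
The paper itself gives no proof of Theorem \ref{prop}: it is quoted verbatim from \cite{BBE}, so there is nothing internal to compare against. Your proposal is correct and is essentially the argument of the source: (i)--(iii) are read off the definition exactly as you describe, and countable stability is reduced, as in \cite{BBE}, to the decomposition criterion ``$\dim_{tH}X\le d$ iff $X$ splits into a set $C$ with $\dim_H C\le d-1$ and a zero-dimensional remainder,'' followed by the classical countable closed sum theorem for $\dim_t$. The one step you flag as the crux --- producing, from a zero-dimensional $Z=X\setminus C$, arbitrarily small neighbourhoods whose boundaries avoid $Z$ --- is precisely the first separation theorem of dimension theory (for every pair of disjoint closed sets in a separable metric space there is a partition between them disjoint from a prescribed zero-dimensional subspace); it can simply be cited from \cite{Eng} or \cite{HW} rather than reproved, and your sketch of it is the standard proof idea. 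Two small points of hygiene: the ``iff'' with exactly $d-1$ in the forward direction need not hold for general separable spaces (the infimum in Definition \ref{deftoph} is attained only for compact spaces, cf.\ Theorem \ref{min}), but the $\varepsilon$-perturbed version you actually invoke is the correct one; and in the manifold case one should fix a metric for which the charts are locally bi-Lipschitz (e.g.\ a Riemannian one) before applying Lipschitz invariance and countable stability.
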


\begin{theorem} \label{prod} If $X$ is a non-empty separable metric space then
$$\dim_{tH}\left(X \times [0,1]\right)=\dim_{H} X+1.$$
\end{theorem}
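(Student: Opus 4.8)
The plan is to prove the two inequalities $\dim_{tH}(X\times[0,1])\le \dim_H X+1$ and $\dim_{tH}(X\times[0,1])\ge \dim_H X+1$ separately. Write $s=\dim_H X$, and recall that the product metric induces the product topology, so the sets $V\times W$ with $V$ open in $X$ and $W$ open in $[0,1]$ form a basis of $X\times[0,1]$. I may assume $s<\infty$, since if $s=\infty$ the lower bound argument below already forces $\dim_{tH}(X\times[0,1])=\infty$. Throughout I will use the elementary identity $\partial(V\times W)=(\partial V\times\overline{W})\cup(\overline{V}\times\partial W)$, valid for open $V\subseteq X$ and $W\subseteq[0,1]$, together with the fact that the $1$-Lipschitz projection $\pr_X\colon X\times[0,1]\to X$ does not increase Hausdorff dimension.

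For the upper bound, fix $\varepsilon>0$. By Theorem~\ref{<} we have $\dim_{tH}X\le s$, so the definition of the topological Hausdorff dimension supplies a basis $\mathcal{V}$ of $X$ with $\dim_H\partial V\le s-1+\varepsilon$ for every $V\in\mathcal{V}$. Let $\mathcal{U}$ consist of all products $V\times W$ with $V\in\mathcal{V}$ and $W$ an open subinterval of $[0,1]$; this is a basis of $X\times[0,1]$. Using the identity for $\partial(V\times W)$ and the standard product inequalities $\dim_H(\partial V\times\overline{W})\le\dim_H\partial V+1$ (as $\overline{W}$ is an interval) and $\dim_H(\overline{V}\times\partial W)=\dim_H\overline{V}\le s$ (as $\partial W$ is finite), I obtain $\dim_H\partial(V\times W)\le\max\{s+\varepsilon,\,s\}=s+\varepsilon$. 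Hence $\dim_{tH}(X\times[0,1])\le(s+\varepsilon)+1$, and letting $\varepsilon\to0$ gives the upper bound.

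For the lower bound, suppose $\mathcal{U}$ is any basis of $X\times[0,1]$ with $\dim_H\partial U\le t$ for every $U\in\mathcal{U}$; I claim $t\ge s$, which yields $\dim_{tH}(X\times[0,1])\ge s+1$. Fix $x_0\in X$. Since $\mathcal{U}$ is a basis, choose $U\in\mathcal{U}$ with $(x_0,1/2)\in U\subseteq X\times(1/4,3/4)$, and then a basic rectangle $V\times J\subseteq U$ with $x_0\in V$ open and $1/2\in J$. For every $x\in V$ the fibre $U_x=\{r:(x,r)\in U\}$ satisfies $J\subseteq U_x\subseteq(1/4,3/4)$, so it is a non-empty proper open subset of the connected space $[0,1]$; picking $r_x\in\partial U_x$ gives $(x,r_x)\in\partial U$ and hence $x\in\pr_X(\partial U)$. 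Thus $V\subseteq\pr_X(\partial U)$ and $\dim_H V\le\dim_H\pr_X(\partial U)\le\dim_H\partial U\le t$. Consequently every point of $X$ has a neighbourhood of Hausdorff dimension at most $t$; covering the separable space $X$ by countably many such neighbourhoods (Lindel\"of) yields $s=\dim_H X\le t$, as claimed.

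The one genuinely delicate point is the upper bound: a naive product basis need not work, because for a general open $V\subseteq X$ the slice $\partial V\times\overline{W}$ may have dimension up to $s+1$. The crucial idea is therefore to feed in a basis of $X$ whose boundaries are as small as the topological Hausdorff dimension of $X$ permits, exploiting $\dim_{tH}X\le\dim_H X$ from Theorem~\ref{<}; this is exactly what shaves off the extra unit and brings $\dim_H\partial V+1$ down to roughly $s$. The lower bound, by contrast, is soft: it uses only the connectedness of the fibres $\{x\}\times[0,1]$ and the fact that Lipschitz projections do not raise Hausdorff dimension.
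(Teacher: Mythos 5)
Your proof is correct. Note that this paper does not actually prove Theorem \ref{prod}: it appears in the list of properties quoted from \cite{BBE} (``The next theorems are from \cite{BBE}''), so there is no in-paper argument to compare against. Judged on its own, your argument is complete and the two delicate points are handled properly. For the upper bound you correctly realize that a product basis built from an \emph{arbitrary} basis of $X$ would only give $\dim_H X+2$, and that one must first invoke $\dim_{tH}X\le\dim_H X$ (Theorem \ref{<}) to obtain a basis $\mathcal{V}$ with $\dim_H\partial V\le \dim_H X-1+\varepsilon$; the identity $\partial(V\times W)=(\partial V\times\overline{W})\cup(\overline{V}\times\partial W)$ and the elementary bound $\dim_H(A\times I)\le\dim_H A+1$ then do the rest. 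For the lower bound, the observation that every fibre $U_x$ ($x\in V$) is a non-empty proper relatively open subset of the connected space $[0,1]$, hence has non-empty boundary, correctly yields $V\subseteq\pr_X(\partial U)$ and so $\dim_H V\le\dim_H\partial U$; the passage from ``locally of dimension $\le t$'' to ``globally of dimension $\le t$'' via the Lindel\"of property and countable stability of $\dim_H$ is exactly where the separability hypothesis is used. The degenerate cases ($\dim_H X=\infty$, and boundaries forced to be empty when $\dim_{tH}X=0$, using the convention $\dim_H\emptyset=-1$) are also consistent with the paper's conventions.
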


For compact metric spaces the infimum is attained in the definition
of the topological Hausdorff dimension.

\begin{theorem} \label{min} If $K$ is a non-empty compact metric space then
$$\dim_{tH} K=\min\{d:
 K \textrm{ has a basis } \mathcal{U}  \textrm{ s. t. } \dim_{H} \partial {U} \leq d-1 \textrm{ for every } U\in \mathcal{U}
\}.$$
 \end{theorem}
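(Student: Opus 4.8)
Write $d_0=\dim_{tH}K$; the task is to exhibit a basis $\mathcal U$ of $K$ with $\dim_H\partial U\le d_0-1$ for every $U\in\mathcal U$, since the definition already guarantees that no $d<d_0$ admits such a basis. Two cases are immediate. If $d_0=\infty$ the required inequality is vacuous and any basis works. If $d_0=0$, then $\dim_t K=0$ by Fact \ref{equiv}, so the compact metric space $K$ has a basis of clopen sets; for a clopen $U$ we have $\partial U=\emptyset$ and $\dim_H\emptyset=-1=d_0-1$. Hence I may assume $0<d_0<\infty$.

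The first step is to reduce the theorem to the following local statement: for every $x\in K$ and every $\rho>0$ there is an open set $V$ with $x\in V\subseteq U(x,\rho)$ and $\dim_H\partial V\le d_0-1$. Granting this, for each $n$ the family of all such $V$ with $\rho=1/n$ is an open cover of $K$, so by compactness finitely many of them, say $V_{n,1},\dots,V_{n,k_n}$, cover $K$. The countable collection $\mathcal U=\{V_{n,i}\}$ is a basis: given $y$ and an open $G\ni y$, choose $n$ with $U(y,2/n)\subseteq G$ and an index $i$ with $y\in V_{n,i}$; since $\diam V_{n,i}\le 2/n$ we get $y\in V_{n,i}\subseteq U(y,2/n)\subseteq G$. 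Every member satisfies $\dim_H\partial V_{n,i}\le d_0-1$, so by countable stability of Hausdorff dimension $\dim_H\bigl(\bigcup_{U\in\mathcal U}\partial U\bigr)=\sup_U\dim_H\partial U\le d_0-1$. This is exactly the bound demanded in Theorem \ref{min}, and together with $\dim_{tH}K=d_0$ it shows the infimum is attained.

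Everything thus rests on the local statement, and here lies the real difficulty. A weak form is easy: by the definition of $\dim_{tH}$, for each $\varepsilon>0$ there is a basis whose boundaries have Hausdorff dimension $\le d_0-1+\varepsilon$; taking $V$ to be the union of those basis cells that meet a small neighbourhood of $x$ and lie inside $U(x,\rho)$ gives $\partial V\subseteq\bigcup\partial U$, hence $\dim_H\partial V\le d_0-1+\varepsilon$. The problem is to remove the $\varepsilon$. One cannot do this by amalgamating the near-optimal bases as $\varepsilon\to 0$, because the Hausdorff dimension of a union is the \emph{supremum} of the dimensions, so any fixed collection assembled from the $\varepsilon$-bases retains the worst, coarsest value strictly above $d_0-1$. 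To force a genuine drop to $d_0-1$ I would exploit a continuum of competing choices through a slicing (coarea/Eilenberg) inequality: fixing a $1$-Lipschitz Urysohn function $\phi$ with $\phi\equiv 0$ near $x$ and $\phi\equiv 1$ off $U(x,\rho)$, and a set $R$ carrying the transition, the estimate
\[
\int_{\mathbb{R}}^{*}\mathcal{H}^{t-1}\bigl(R\cap\phi^{-1}(c)\bigr)\,dc\le C\,\mathcal{H}^{t}(R)
\]
shows that if $\mathcal H^{t}(R)=0$ for every $t>d_0$ then $\dim_H\bigl(R\cap\phi^{-1}(c)\bigr)\le t-1$ for almost every $c$ and every such $t$, whence $\dim_H\partial V\le d_0-1$ for a suitable level set $\partial V\subseteq R\cap\phi^{-1}(c)$, after intersecting over a sequence $t_k\downarrow d_0$.

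The decisive obstacle is the calibration of this exponent. Applied naively to $R=K$ the coarea inequality only sees $\dim_H K$, which may be far larger than $d_0$, and yields the useless bound $\dim_H K-1$; the codimension-one separator $\phi^{-1}(c)$ genuinely lives in a full-dimensional region, so there is real tension between its separating role and the requirement $\dim_H R\le d_0$. Reconciling the two — confining the transition layer of $\phi$ to a region built from a near-optimal basis while keeping $V$ an honest neighbourhood of $x$ inside $U(x,\rho)$, and verifying the finiteness of the $\mathcal H^{d_0}$-mass needed to run the estimate — is the technical heart of the proof. Compactness of $K$ is used twice: in the passage from the local statement to a countable basis above, and in extracting at each scale the finite amount of skeletal data on which the calibration is performed.
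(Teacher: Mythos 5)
The paper gives no proof of Theorem~\ref{min}; it is quoted from \cite{BBE}. Judged on its own, your argument has a genuine gap. The reduction to the local statement (for every $x\in K$ and $\rho>0$ there is an open $V$ with $x\in V\subseteq U(x,\rho)$ and $\dim_{H}\partial V\le d_0-1$, where $d_0=\dim_{tH}K$) is correct and is a clean use of compactness, and the two degenerate cases are handled properly. But the local statement \emph{is} the entire content of the theorem, and you do not prove it: you describe a coarea/Eilenberg strategy and then state yourself that its ``calibration'' is an unresolved obstacle. As it stands the strategy is circular. To make the Eilenberg inequality produce the exponent $d_0-1$ rather than $\dim_{H}K-1$, you need a Lipschitz $\phi$ separating $x$ from $K\setminus U(x,\rho)$ whose transition set $R$ (the set where $\phi$ is not locally constant, which must contain $\partial\{\phi<c\}$) satisfies $\dim_{H}R\le d_0$; but the existence of such an $R$ is essentially equivalent to the existence of separating sets of dimension $\le d_0-1$, i.e., to the local statement itself. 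Nothing in the near-optimal bases supplied by the definition of $\dim_{tH}$ yields such a $\phi$: the natural Urysohn functions built from the distance to a basis element have full-dimensional transition annuli, and you offer no substitute construction. The essential missing idea is precisely the passage from ``$\le d_0-1+\varepsilon$ for every $\varepsilon>0$, with witnesses depending on $\varepsilon$'' to ``$\le d_0-1$ with a single witness,'' and the proposal does not supply it.

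A secondary, smaller flaw: in the ``weak form'' you claim that for $V$ a union of basis cells one has $\partial V\subseteq\bigcup\partial U$. This holds for finite unions but fails for infinite ones: take $K=\{0\}\cup\{1/k:k\ge 1\}$ and $U_k=\{1/k\}$, so that each $\partial U_k=\emptyset$ while $\partial\bigl(\bigcup_k U_k\bigr)=\{0\}$. The weak form itself is still true, since a single basis element containing $x$ and contained in $U(x,\rho)$ already has boundary of Hausdorff dimension at most $d_0-1+\varepsilon$, but the set-theoretic step as written is wrong, and the same issue would resurface in any attempt to assemble the final $V$ from infinitely many cells.
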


\subsection{Technical lemmas}

The following lemma is folklore, but we could not find a reference, so we
outline a short proof.

\begin{lemma} \label{one-to-one}
Let $K$ be a compact metric space with $\dim_t K=0$. Then the generic $f\in C(K)$ is one-to-one.
\end{lemma}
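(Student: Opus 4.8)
The plan is to exploit the fact that $\dim_t K = 0$ means $K$ has a basis of clopen sets, together with a standard Baire category argument. First I would fix a countable basis $\{U_n\}_{n \in \mathbb{N}}$ of $K$ consisting of clopen sets; this exists because $K$ is compact (hence separable) and zero-dimensional, so every point has arbitrarily small clopen neighborhoods, and by separability countably many suffice. The key observation is that $f \in C(K)$ is one-to-one if and only if for every pair $m \neq n$ with $\overline{U_m} \cap \overline{U_n} = \emptyset$ (or more simply, for every pair of disjoint clopen basis elements) the images $f(U_m)$ and $f(U_n)$ are separated, and more fundamentally that injectivity can be detected through a countable family of conditions indexed by pairs from this basis.

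The heart of the argument is to define, for each pair of disjoint clopen sets $A, B$ from a suitable countable collection, the set
\[
G_{A,B} = \{ f \in C(K) : f(A) \cap f(B) = \emptyset \}.
\]
Since $A$ and $B$ are compact and disjoint, I would show each $G_{A,B}$ is open in $C(K)$: if $f(A) \cap f(B) = \emptyset$ then these are disjoint compact subsets of $\mathbb{R}$, hence separated by some positive distance, and any $g$ close to $f$ in the supremum norm keeps the images separated. The more delicate point is density of $G_{A,B}$: given any $f$ and $\varepsilon > 0$, I would perturb $f$ on one of the clopen sets, say adding a constant to $f|_A$, using that $A$ is clopen so this perturbation is continuous on $K$; choosing the constant large enough pushes $f(A)$ off $f(B)$ while keeping the change within $\varepsilon$ is not automatic, so instead I would shift by a small constant chosen to avoid the finitely-measured overlap, relying on the fact that translating $f|_A$ by a generic small real number makes the two compact image sets disjoint.

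Once each $G_{A,B}$ is shown to be dense and open, their countable intersection $G = \bigcap G_{A,B}$ is co-meager by Baire's theorem, so the generic $f$ lies in $G$. It then remains to verify that $f \in G$ forces $f$ to be one-to-one: given distinct $x, y \in K$, zero-dimensionality provides disjoint clopen basis elements $A \ni x$ and $B \ni y$ from our countable collection, and $f \in G_{A,B}$ gives $f(x) \neq f(y)$.

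The main obstacle I expect is the density step, specifically engineering a perturbation of size at most $\varepsilon$ that genuinely separates the two compact images $f(A)$ and $f(B)$ rather than merely shrinking their overlap. The resolution is that shifting $f$ by $t$ on the clopen set $A$ replaces the overlap condition by $f(A) \cap (f(B) - t) = \emptyset$, and the set of bad shifts $t$ for which these intersect is a closed set of measure zero (since $f(A)$ and $f(B)$ are compact, one checks the bad set is $f(A) - f(B)$, which is compact but may have positive measure). To circumvent this I would instead perturb more cleverly, for instance by composing $f$ with a strictly monotone homeomorphism of $\mathbb{R}$ that spreads apart the relevant values, or by using the clopen structure to redefine $f$ on $A$ to take values in a small interval disjoint from $f(B)$ after an $\varepsilon$-small adjustment; the clopenness of $A$ is exactly what makes such a piecewise modification continuous, and this is where the hypothesis $\dim_t K = 0$ is essential.
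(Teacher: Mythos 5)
Your overall architecture is exactly that of the paper: a countable basis of clopen sets, the open sets $G_{A,B}=\{f: f(A)\cap f(B)=\emptyset\}$ for disjoint clopen $A,B$ (openness via compactness of the images), a Baire category intersection, and the final separation of two distinct points by disjoint basis elements. The openness step and the concluding step are fine.

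The gap is in the density step, which is the only place where real work happens, and neither of your proposed resolutions survives scrutiny. Composing $f$ with a strictly monotone homeomorphism $h$ of $\mathbb{R}$ cannot possibly help: since $h$ is injective, $h(f(A))\cap h(f(B))=h\bigl(f(A)\cap f(B)\bigr)$, so the images of $A$ and $B$ under $h\circ f$ intersect if and only if they did under $f$. Your second idea, redefining $f$ on $A$ so that $g(A)$ lies in a small interval disjoint from $f(B)$, is impossible within an $\varepsilon$-perturbation whenever $\diam f(A)>2\varepsilon+\diam J$ (e.g.\ $f(A)=f(B)=[0,1]$ and $\varepsilon<1/2$): any $g$ with $\|g-f\|\le\varepsilon$ satisfies $g(A)\subseteq B(f(A),\varepsilon)$, so $g(A)$ cannot be squeezed into a short interval, let alone one avoiding $f(B)$. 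You also correctly observed that a global constant shift fails because the bad set $f(B)-f(A)$ may contain a neighborhood of $0$. The missing idea — which is what the paper's terse phrase ``witnessed by functions of finite range'' encodes — is to refine: use zero-dimensionality and compactness to partition $K$ into finitely many clopen pieces on each of which $f$ oscillates by less than $\varepsilon/2$, replace $f$ by a constant on each piece (a continuous function within $\varepsilon/2$ of $f$ because the pieces are clopen), and then perturb these finitely many constants by less than $\varepsilon/2$ so that the values attained on pieces meeting $A$ are all distinct from the values attained on pieces meeting $B$. This is a finite avoidance problem and always solvable, giving $g\in G_{A,B}\cap B(f,\varepsilon)$. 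Without this refinement into small clopen pieces, the density argument does not go through.
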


\begin{proof} Let $\mathcal{U}$ be a countable basis of $K$ consisting of clopen sets. For every $U,V\in \mathcal{U}$, $U\cap V=\emptyset$ consider
$$\mathcal{F}_{U,V}=\{f\in C(K): f(U)\cap f(V)=\emptyset\}.$$
By compactness of $U$ and $V$, the sets $\mathcal{F}_{U,V}$ are open. They are also dense, in fact
this is witnessed by functions of finite range. Thus the countable intersection
$$\mathcal{F}=\bigcap_{U,V\in \mathcal{U},~ U\cap V=\emptyset} \mathcal{F}_{U,V}$$
is co-meager in $C(K)$, and clearly every $f\in \mathcal{F}$ is one-to-one.
\end{proof}

The next lemma and its consequence will be very useful throughout
the paper.

\begin{lemma} \label{catlem}

Let $X,Y$ be complete metric spaces and let $R\colon X\to Y$ be a
continuous, open and surjective mapping.
\begin{enumerate}[(i)]
\item  If $A\subseteq X$ is of second category/co-meager then
$R(A)\subseteq Y$ is of second category/co-meager.
\item  If
$B\subseteq Y$ is of second category/co-meager then
$R^{-1}(B)\subseteq X$ is of second category/co-meager.
\end{enumerate}
\end{lemma}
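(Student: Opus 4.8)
The plan is to prove Lemma~\ref{catlem} by exploiting the interplay between openness and continuity of $R$, treating the second category and co-meager cases in parallel. The key observation is that openness of $R$ lets me push meager sets forward in a controlled way, while continuity lets me pull them back. I will reduce everything to a single basic fact: \emph{a continuous open surjection $R$ maps nowhere dense sets to nowhere dense sets is false in general, but the preimage of a nowhere dense set under an open map need not be nowhere dense either}, so I cannot argue naively. Instead I will argue contrapositively through the complementary meager/co-meager dichotomy, relating category of $A$ to category of its image via the open-map property.

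\medskip

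First I would establish (i) for co-meager sets. Suppose $A\subseteq X$ is co-meager, so $X\setminus A$ is meager, say $X\setminus A\subseteq\bigcup_{n} F_n$ with each $F_n$ closed and nowhere dense. I claim $Y\setminus R(A)$ is meager. The cleanest route is to pass through a dense $G_\delta$ set: since $A$ is co-meager it contains a dense $G_\delta$ set $G=\bigcap_n U_n$ with each $U_n$ open and dense. Because $R$ is an open surjection, each $R(U_n)$ is open and dense in $Y$; surjectivity onto $Y$ combined with density of $U_n$ in $X$ gives density of $R(U_n)$. Thus $R(G)\supseteq$ is not immediately a $G_\delta$, so I instead observe that $R(A)\supseteq R(G)$ and it suffices to show $R(G)$ is co-meager. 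For this I would show that $Y\setminus R(G)\subseteq\bigcup_n(Y\setminus R(U_n))$ fails because $R$ need not commute with intersections; the honest fix is to use the Banach category theorem / localization: since each $U_n$ is open dense and $R$ is continuous, $R^{-1}(Y\setminus R(U_n))$ is disjoint from $U_n$, hence nowhere dense, and then transfer back. The cleanest formulation avoids images of intersections entirely by working on the preimage side, which motivates proving (ii) first.

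\medskip

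So in practice I would prove (ii) first, as it is the direct one. Let $B\subseteq Y$ be co-meager, with $Y\setminus B\subseteq\bigcup_n N_n$, each $N_n$ closed nowhere dense. Then $R^{-1}(Y\setminus B)\subseteq\bigcup_n R^{-1}(N_n)$, and since $R$ is continuous each $R^{-1}(N_n)$ is closed; the crucial point is that $R^{-1}(N_n)$ is \emph{nowhere dense}, which follows because $R$ is an open map: if $R^{-1}(N_n)$ contained a non-empty open set $W$, then $R(W)$ would be a non-empty open subset of $N_n$, contradicting that $N_n$ is nowhere dense. Hence $R^{-1}(Y\setminus B)$ is meager and $R^{-1}(B)$ is co-meager. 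The second category version is the contrapositive: if $R^{-1}(B)$ were meager, then since $R$ is surjective one shows $B=R(R^{-1}(B))$ would be meager by part (i), a contradiction. For part (i) second category, I run the symmetric contrapositive argument: if $R(A)$ were meager, write $R(A)\subseteq\bigcup_n M_n$ with $M_n$ closed nowhere dense, then $A\subseteq R^{-1}(R(A))\subseteq\bigcup_n R^{-1}(M_n)$ with each $R^{-1}(M_n)$ nowhere dense by the open-map argument above, so $A$ is meager, contradicting second category.

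\medskip

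The main obstacle is the co-meager direction of (i), where I genuinely need that \emph{images of co-meager sets are co-meager} rather than merely that images of second-category sets are second category. The difficulty is that forward images do not interact well with countable intersections, so I cannot simply push a dense $G_\delta$ forward. I expect to resolve this by reducing (i)-co-meager to (ii)-co-meager via the following device: $A$ co-meager means $X\setminus A$ meager; I want $Y\setminus R(A)$ meager. Note $y\notin R(A)$ iff $R^{-1}(y)\cap A=\emptyset$ iff $R^{-1}(y)\subseteq X\setminus A$, and I would combine this fibrewise containment with the meagerness of $X\setminus A$ and the open-map property to conclude $Y\setminus R(A)$ is nowhere dense on a dense set, using the localized Banach category theorem. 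Pinning down this last transfer cleanly — ensuring the fibre condition really forces $Y\setminus R(A)$ into a meager set — is the delicate step, and I would handle it by invoking that a set with the Baire property which is second category in every open set is co-meager, combined with analyticity/Borel-measurability considerations already recalled in the Preliminaries.
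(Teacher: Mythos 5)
Your treatment of (ii) for co-meager sets and of (i) for second category sets is correct and coincides with the paper's argument: the key point in both is that $R^{-1}(N)$ is closed and nowhere dense whenever $N$ is closed and nowhere dense, since a non-empty open set inside $R^{-1}(N)$ would have a non-empty open image inside $N$. However, two of the four assertions are not actually proved. First, your argument for (ii) in the second category case is wrong as stated: from ``$R^{-1}(B)$ is meager'' you conclude that $B=R(R^{-1}(B))$ ``would be meager by part (i)'', but part (i) asserts that images of second category sets are of second category, whose contrapositive is that a set with meager \emph{image} is meager --- not that a meager set has meager image. The latter is false for continuous open surjections: the projection $[0,1]^2\to[0,1]$ maps the closed nowhere dense set $[0,1]\times\{0\}$ onto $[0,1]$. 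The correct route, which the paper takes, passes to complements: if $R^{-1}(B)$ were meager then $X\setminus R^{-1}(B)$ would be co-meager, so by the co-meager case of (i) its image, which is contained in $Y\setminus B$, would be co-meager, forcing $B$ to be meager.

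Second, and more seriously, the co-meager case of (i) --- which you correctly identify as the crux, and on which the fix above also depends --- is never actually established. You name the right ingredients (replace $A$ by a dense $G_\delta$ set $G$ so that $R(G)$ is analytic and hence has the Baire property; use that a Baire-property set which is of second category in every non-empty open set is co-meager), but you explicitly leave the ``delicate step'' of localization unresolved, and your proposed fibrewise route via a ``localized Banach category theorem'' is not carried out. The missing argument is short: if $R(G)$ is not co-meager then, having the Baire property, it is meager in some non-empty open $U\subseteq Y$; the restriction $\widehat{R}=R|_{R^{-1}(U)}\colon R^{-1}(U)\to U$ is again continuous, open and surjective, so by the already-proved fact that preimages of meager sets are meager, $\widehat{R}^{-1}(R(G)\cap U)$ is meager in $R^{-1}(U)$; but it contains $G\cap R^{-1}(U)$, which is co-meager in the non-empty open set $R^{-1}(U)$ --- a contradiction. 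Without this (or an equivalent) step the proposal does not prove the lemma.
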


 \begin{proof}

$(i)$ First we show that if $B\subseteq Y$ is meager then
$R^{-1}(B)\subseteq X$ is also meager. Clearly it is enough to prove
that if $B\subseteq Y$ is closed and nowhere dense then
$R^{-1}(B)\subseteq X$ is nowhere dense.
Since $R$ is continuous $R^{-1}(B)$ is closed.
 We show that $R^{-1}(B)$ is nowhere dense.
Assume to the contrary that there is a non-empty open set
$U\subseteq R^{-1}(B)$. Since the map $R$ is open the set
 $R(U)$ is
non-empty and open. Then $R(U)\subseteq B$ implies that $B$ is of
second category, a contradiction.

Let $A\subseteq X$ be of second
category. Assume to the contrary that $R(A)\subseteq Y$ is meager.
Then by the previous argument $R^{-1}(R(A))$ is meager and $A\subseteq
R^{-1}(R(A))$, a contradiction.

Suppose that  $A\subseteq X$ is co-meager. We want to prove that
$R(A)\subseteq Y$ is also co-meager. We may assume that $A$ is a
dense $G_{\delta}$ set. Assume to the contrary that $R(A)$ is not
co-meager. As a continuous image of a Borel set $R(A)$ is analytic,
and hence has the Baire property. Thus there exists a non-empty open set
$U\subseteq Y$ such that $R(A)\cap U$ is meager. Since $R$ is continuous
and surjective $R^{-1}(U)$ is open and non-empty. The map
$\widehat{R}=R|_{R^{-1}(U)}:R^{-1}(U)\to U$ is clearly continuous,
open and surjective. Since $R(A)\cap U$ is meager $\widehat{R}^{-1}(R(A)\cap U)$ is meager
in $R^{-1}(U)$.  The set $A\cap
R^{-1}(U)$ is co-meager in $R^{-1}(U)$, and clearly $A\cap
R^{-1}(U)\subseteq \widehat{R}^{-1}(R(A)\cap U)$, a contradiction.

$(ii)$ Let $B\subseteq Y$ be of second category. Assume to the
contrary that $R^{-1}(B)$ is  meager. Then $R^{-1}(B)^{c}$ is
co-meager and its $R$ image $R(R^{-1}(B)^{c})\subseteq B^{c}$ is not
co-meager. This contradicts   part (i) of the lemma.

Let $B\subseteq Y$ be co-meager. Then $B^{c}$ is meager, and hence
$R^{-1}(B^{c})$ is meager. This implies that $R^{-1}(B)=X\setminus
R^{-1}(B^{c})$ is co-meager.
\end{proof}

We need the following special case.

\begin{corollary} \label{catlem2}

Let $K_1\subseteq K_2$ be compact metric spaces and
$$R\colon C(K_2)\rightarrow
 C(K_1), ~ R(f)=f|_{K_1}.$$

\begin{enumerate}[(i)]
\item  If $\mathcal{F}_2\subseteq C(K_2)$ is of second category/co-meager then $
R(\mathcal{F}_2)\subseteq C(K_1)$ is of second category/co-meager.
\item  If
$\mathcal{F}_1\subseteq C(K_1)$ is of second category/co-meager then $
R^{-1}(\mathcal{F}_1)\subseteq C(K_2)$ is of second category/co-meager.
\end{enumerate}
\end{corollary}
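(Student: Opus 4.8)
The plan is to derive Corollary~\ref{catlem2} directly from Lemma~\ref{catlem} by verifying that the restriction map $R\colon C(K_2)\to C(K_1)$, $R(f)=f|_{K_1}$, satisfies the three hypotheses of the lemma: that the domain and codomain are complete metric spaces, and that $R$ is continuous, open, and surjective. Since $K_1$ and $K_2$ are compact, both $C(K_1)$ and $C(K_2)$ endowed with the supremum norm are complete metric spaces, so that hypothesis is immediate. Continuity of $R$ is also immediate, since $\|f|_{K_1}-g|_{K_1}\|_\infty\leq \|f-g\|_\infty$ shows that $R$ is $1$-Lipschitz.

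For surjectivity, I would invoke the Tietze extension theorem: every $g\in C(K_1)$ defined on the closed subspace $K_1\subseteq K_2$ extends to some $f\in C(K_2)$, and then $R(f)=g$. For openness, I would argue as follows. Let $\mathcal{O}\subseteq C(K_2)$ be open and fix $g=R(f)$ with $f\in\mathcal{O}$; choose $\varepsilon>0$ so that the ball $\{h\in C(K_2):\|h-f\|_\infty<\varepsilon\}$ lies in $\mathcal{O}$. I claim the ball $\{u\in C(K_1):\|u-g\|_\infty<\varepsilon\}$ is contained in $R(\mathcal{O})$. Given such a $u$, consider the function $u-g\in C(K_1)$, which has supremum norm $<\varepsilon$; by Tietze one can extend it to a function $w\in C(K_2)$ with $\|w\|_\infty<\varepsilon$ (the version of Tietze that preserves the sup-norm bound, or equivalently composing the extension with a truncation at level $\|u-g\|_\infty$). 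Then $f+w\in\mathcal{O}$ and $R(f+w)=g+(u-g)=u$, so $u\in R(\mathcal{O})$, proving $R(\mathcal{O})$ is open.

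Once these hypotheses are verified, both parts of the corollary follow verbatim from the corresponding parts of Lemma~\ref{catlem}: part~(i) here is exactly part~(i) of the lemma applied with $X=C(K_2)$, $Y=C(K_1)$, and $A=\mathcal{F}_2$, while part~(ii) here is part~(ii) of the lemma applied with $B=\mathcal{F}_1$. The main obstacle, such as it is, is the sup-norm-preserving form of openness; I expect the cleanest route is to extend $u-g$ arbitrarily by Tietze and then compose with the truncation map $t\mapsto \max\{-c,\min\{c,t\}\}$ where $c=\|u-g\|_\infty<\varepsilon$, which is a retraction onto $[-c,c]$ and hence keeps the extension within the required norm bound while not changing its values on $K_1$. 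Everything else is a routine application of the lemma.
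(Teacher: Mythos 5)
Your proposal is correct and follows exactly the paper's route: the paper likewise deduces the corollary from Lemma \ref{catlem} by noting that $C(K_1)$ and $C(K_2)$ are complete, $R$ is continuous, and Tietze's Extension Theorem gives surjectivity and openness. You merely spell out the openness argument (extend $u-g$ with a sup-norm-preserving Tietze extension, or truncate), which the paper leaves implicit; the details are right.
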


\begin{proof} Clearly $C(K_2)$ and $C(K_1)$ are complete metric spaces, $R$
is continuous, and Tietze's Extension Theorem implies that $R$ is
surjective and open. Thus Lemma \ref{catlem} completes the proof.
\end{proof}

We need the following theorem, see \cite[6.1. Thm.]{MM} for the
proof.

\begin{theorem} \label{tmm} Let $X,Y$ be Polish spaces, and let $E\subseteq
X\times Y$ be a Borel set. If $E_{x}$ is $\sigma$-compact for all
$x\in X$ then the function $h\colon X\rightarrow [-1,\infty]$ defined by
$h(x)=\dim_{H} E_{x}$ is Borel measurable.  \end{theorem}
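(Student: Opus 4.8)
The plan is to prove Borel measurability by exhibiting $h$ as a countable supremum of Borel functions of the form $x\mapsto \dim_{H}f_{n}(x)$, where each $f_{n}\colon X\to \mathcal{K}(Y)$ is a Borel map into the space $\mathcal{K}(Y)$ of compact subsets of $Y$, equipped with the Hausdorff (Vietoris) topology and with $\emptyset$ adjoined as an isolated point. Two ingredients are needed: \textbf{(a)} that Hausdorff dimension, viewed as $\dim_{H}\colon \mathcal{K}(Y)\to[-1,\infty]$, is itself Borel measurable; and \textbf{(b)} a Borel parametrization of the $\sigma$-compact sections, i.e.\ a sequence of Borel maps $f_{n}$ with $E_{x}=\bigcup_{n}f_{n}(x)$ for every $x$. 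Granting both, countable stability of Hausdorff dimension gives $\dim_{H}E_{x}=\sup_{n}\dim_{H}f_{n}(x)$, with the convention $\dim_{H}\emptyset=-1$ absorbing the empty sections, so $h=\sup_{n}(\dim_{H}\circ f_{n})$ is a supremum of countably many $[-1,\infty]$-valued Borel functions and hence Borel.

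For ingredient \textbf{(a)}, I would first show that for each fixed $s\ge 0$ the Hausdorff content $K\mapsto \mathcal{H}^{s}_{\infty}(K)$ is upper semicontinuous on $\mathcal{K}(Y)$. Indeed, given a \emph{finite} cover of $K$ by open sets $U_{1},\dots,U_{N}$ with $\sum_{i}(\diam U_{i})^{s}$ within $\varepsilon$ of $\mathcal{H}^{s}_{\infty}(K)$ (finite covers suffice by compactness), every compact set sufficiently close to $K$ in the Hausdorff metric is still contained in $\bigcup_{i}U_{i}$, since this open set contains a neighbourhood $B(K,\eta)$ of the compact set $K$. Hence $\mathcal{H}^{s}_{\infty}$ is upper semicontinuous, so $\{K:\mathcal{H}^{s}_{\infty}(K)>0\}$ is $F_{\sigma}$ and in particular Borel. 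Using that $\mathcal{H}^{q}_{\infty}(K)>0\iff \mathcal{H}^{q}(K)>0$ together with $\dim_{H}K=\sup\{q\ge 0:\mathcal{H}^{q}(K)>0\}$, one obtains
$$\{K\in\mathcal{K}(Y):\dim_{H}K>c\}=\bigcup_{q\in\mathbb{Q},\,q>c}\{K:\mathcal{H}^{q}_{\infty}(K)>0\},$$
a countable union of Borel sets; as $\{\emptyset\}$ is clopen, the value $-1$ is handled separately. Thus $\dim_{H}$ is Borel on $\mathcal{K}(Y)$.

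Ingredient \textbf{(b)} is the main obstacle and the genuinely descriptive-set-theoretic part: from a single Borel set $E$ all of whose sections are $\sigma$-compact one must produce a decomposition into compact pieces that is uniform, and Borel, in $x$. The strategy is to invoke the structural theory of Borel sets with $K_{\sigma}$ sections: such an $E$ can be written as a countable union of Borel sets each having compact sections, and for a Borel set with compact sections the assignment $x\mapsto E_{x}$ is a Borel map into $\mathcal{K}(Y)$. Assembling these yields the maps $f_{n}$. This is exactly the point where the full parametrization/selection machinery is needed (the hyperspace being Polish, Borelness of the relation $\{(x,K):K\subseteq E_{x}\}$, and the section-decomposition theorem for $K_{\sigma}$ sections), and it is what the cited reference \cite{MM} supplies.

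With \textbf{(a)} and \textbf{(b)} in hand the theorem is immediate from the identity $h=\sup_{n}(\dim_{H}\circ f_{n})$, since a pointwise supremum of countably many $[-1,\infty]$-valued Borel functions is Borel. I expect the only real difficulty to lie in \textbf{(b)}; ingredient \textbf{(a)} and the final assembly are routine once the correct hyperspace formulation is adopted.
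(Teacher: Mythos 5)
The paper does not actually prove this theorem: it is quoted from \cite{MM} (Theorem 6.1 there), and the only added content is the subsequent remark reconciling the convention $\dim_{H}\emptyset=-1$ with the Borelness of $\pr_{X}(E)$ via \cite{Ke}. Your outline reconstructs the argument of that reference faithfully: ingredient (a) is correctly proved (upper semicontinuity of $K\mapsto\mathcal{H}^{s}_{\infty}(K)$ on $\mathcal{K}(Y)$ via finite open covers, then $\mathcal{H}^{s}_{\infty}(K)>0\Leftrightarrow\mathcal{H}^{s}(K)>0$), and your treatment of the empty sections is exactly the point the paper's accompanying remark addresses. Ingredient (b) --- decomposing a Borel set with $\sigma$-compact sections into countably many Borel sets with compact sections, together with the Borelness of $x\mapsto A_{x}\in\mathcal{K}(Y)$ for a Borel set with compact sections --- is the Arsenin--Kunugui/Saint-Raymond machinery (see \cite{Ke}, Theorems 18.18, 28.8 and 35.46); you correctly identify it as the crux but leave it as a citation rather than a proof. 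Since the paper itself delegates the entire theorem to \cite{MM}, this is consistent with the paper's treatment, though you should be aware that essentially all of the depth of the statement is hidden in that one unproved step.
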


\begin{remark} One has to be a little careful with applying this theorem
in our situation, since, unlike \cite{MM}, we adopt the convention that
$\dim_{H}
\emptyset=-1$. This means that the level sets of $h$ may need to be modified
by the set $\{x\in X: E_x=\emptyset\}=(\pr_{X} (E))^{c}$. Therefore, besides
referring to the proof in \cite{MM}, we
also have to verify that $\pr_{X} (E)$ is Borel. But as $E$ is Borel and
$E_{x}$ is $\sigma$-compact for all
$x\in X$, this follows from \cite[18.18. Thm.]{Ke}.
\end{remark}

\begin{lemma} \label{borel}
Let $K$ be a compact metric space and $d\in \mathbb{R}$. Then the set
$$\Delta=\left\{(f,y)\in C(K)\times \mathbb{R}: \dim_{H} f^{-1}(y)< d\right\}$$
is Borel.
\end{lemma}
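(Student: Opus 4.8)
The plan is to realize $\Delta$ as a sublevel set of the Hausdorff-dimension function of a suitable Borel set, and then invoke Theorem~\ref{tmm}. Set $X=C(K)\times \mathbb{R}$ and $Y=K$. Both are Polish spaces: $C(K)$ is complete and separable under the supremum metric, $\mathbb{R}$ is Polish, and $K$ is compact metric. I would then define
$$E=\left\{((f,y),k)\in \left(C(K)\times \mathbb{R}\right)\times K: f(k)=y\right\},$$
so that for each $(f,y)\in X$ the corresponding slice is exactly $E_{(f,y)}=f^{-1}(y)$.

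First I would check that $E$ is Borel. The evaluation map $C(K)\times K\to \mathbb{R}$, $(f,k)\mapsto f(k)$, is continuous: if $f_n\to f$ uniformly and $k_n\to k$, then $|f_n(k_n)-f(k)|\leq \|f_n-f\|_\infty+|f(k_n)-f(k)|\to 0$. Hence the map $((f,y),k)\mapsto f(k)-y$ is continuous on $X\times Y$, and $E$ is its zero set, so $E$ is closed and in particular Borel.

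Next, every slice $E_{(f,y)}=f^{-1}(y)$ is a closed subset of the compact space $K$, hence compact and a fortiori $\sigma$-compact. Thus the hypotheses of Theorem~\ref{tmm} are met, and the function $h\colon X\to [-1,\infty]$ given by $h(f,y)=\dim_{H} E_{(f,y)}=\dim_{H} f^{-1}(y)$ is Borel measurable. The empty level sets cause no trouble here: the preceding remark already accounts for our convention $\dim_{H}\emptyset=-1$ by verifying that $\pr_{X}(E)$ is Borel, so $h$ remains Borel under this convention.

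Finally, $\Delta=\{(f,y): h(f,y)<d\}$ is the $h$-preimage of the Borel subset $\{t\in [-1,\infty]: t<d\}$ of $[-1,\infty]$, hence Borel, since $h$ is Borel measurable. The whole argument is routine once the machinery of Theorem~\ref{tmm} is in place; the only point requiring a moment's care is the verification that $E$ is Borel, which reduces to the continuity of the evaluation map, and the bookkeeping around the convention $\dim_{H}\emptyset=-1$, which is already settled by the remark.
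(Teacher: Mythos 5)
Your proposal is correct and follows essentially the same route as the paper: it applies Theorem~\ref{tmm} to the same closed set $E=\{(f,y,z): f(z)=y\}$ with $X=C(K)\times\mathbb{R}$ and $Y=K$, notes that the slices $f^{-1}(y)$ are compact, and concludes that $\Delta$ is the preimage of $(-\infty,d)$ under the resulting Borel function. The extra details you supply (continuity of the evaluation map, the bookkeeping for $\dim_{H}\emptyset=-1$ via the preceding remark) are consistent with what the paper leaves implicit.
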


\begin{proof}
We check that the conditions of Theorem \ref{tmm} hold for $X=
C(K)\times \mathbb{R}$, $Y=K$ and $E= \{(f,y,z)\in C(K)\times \mathbb{R}\times K:
f(z)=y\}\subseteq X\times Y$. Clearly $X,Y$ are Polish spaces and
$E$ is closed, thus Borel. For every $(f,y)\in X$ the set
$E_{(f,y)}=\{z\in K: f(z)=y\}=f^{-1}(y)$ is compact. Therefore
Theorem \ref{tmm} implies that $h\colon X\rightarrow [0,\infty]$,
$h((f,y))=\dim_{H}E_{(f,y)}=\dim_{H}f^{-1}(y)$ is Borel measurable.
Thus $h^{-1}\left((-\infty, d)\right)=\left\{(f,y)\in C(K)\times
\mathbb{R}: \dim_{H} f^{-1}(y)<d \right\}=\Delta$ is Borel.
\end{proof}

\begin{lemma} \label{cpt} Suppose $(K,d)$ is a compact metric space such that for
all $x\in K$ and $r>0$ we have $\dim_{t}B(x,r)>0$. Let $\mathcal{C}$ be the
set of connected components of $K$. Then for the generic $f\in C(K)$
$$\bigcup_{C\in \mathcal{C}} \inter f(C)=\inter f(K).$$
\end{lemma}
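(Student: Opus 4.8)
The inclusion $\bigcup_{C\in\mathcal C}\inter f(C)\subseteq\inter f(K)$ holds for \emph{every} $f$, since $f(C)\subseteq f(K)$ forces $\inter f(C)\subseteq\inter f(K)$, so the entire content is the reverse inclusion for the generic $f$. As each component $C$ is a continuum, $f(C)$ is a compact interval $[\min_C f,\max_C f]$, and $y\in\inter f(C)$ precisely when $C$ meets both $\{f<y\}$ and $\{f>y\}$; I will say that such a component \emph{straddles} $y$. Since every connected subset of $K$ lies in a single component, it suffices to exhibit, for each $y\in\inter f(K)$, a \emph{nondegenerate continuum} $\Gamma\subseteq K$ with $\min_\Gamma f<y<\max_\Gamma f$. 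The point to keep in mind is that I must cover \emph{every} interior value $y$, possibly irrational: merely showing that the straddled values are dense in $\inter f(K)$ will not be enough, because the missing set can be a nonempty nowhere dense set (for instance a single point), and that is exactly the failure that genericity has to rule out.

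The engine is the hypothesis. Since $\dim_t B(x,r)>0$ for every ball, Fact~\ref{equiv} and Theorem~\ref{<} show that no ball is totally disconnected, so every ball contains a nondegenerate continuum. This yields the basic \emph{straddling move}: given any $g\in C(K)$, any $\varepsilon>0$ and any value $v$ attained by $g$, pick $x\in g^{-1}(v)$ and, inside a small ball $B(x,r)$ on which $g$ oscillates by less than $\varepsilon$, a nondegenerate continuum $\Gamma$; a perturbation of size $<\varepsilon$ supported near $\Gamma$ pushes $g$ strictly below and strictly above $v$ on $\Gamma$. Because the components of $K$ do not depend on the function and strict inequalities are stable under small perturbations, ``$\Gamma$ (hence its component) straddles $v$'' is an \emph{open} condition in $g$. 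Consequently, for each fixed rational pair $p<q$ the set $\{f:(p,q)\cap f(K)=\emptyset\ \text{or some component straddles a point of}\ (p,q)\}$ is open and dense, and intersecting over all rational $p<q$ shows that for the generic $f$ the open set $\bigcup_{C}\inter f(C)$ is \emph{dense} in $\inter f(K)$. For the measurability bookkeeping I will invoke Theorem~\ref{tmm} and the argument of Lemma~\ref{borel} to see that the relevant pair set $\{(f,y):y\in\bigcup_C\inter f(C)\}$ is analytic, so that the exceptional set of functions has the Baire property.

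The \textbf{main obstacle} is to upgrade this density to the actual equality $\bigcup_C\inter f(C)=\inter f(K)$, that is, to show that for the generic $f$ there is \emph{no} interior value left unstraddled. The difficulty is genuine: the unstraddled interior values form a relatively closed nowhere dense subset of $\inter f(K)$ that may be uncountable, so it cannot be indexed by rationals, and a category--Fubini (Kuratowski--Ulam) argument is useless here because every section $\inter f(K)\setminus\bigcup_C\inter f(C)$ is \emph{automatically} nowhere dense, whereas I need it empty. Moreover the hypothesis alone does not suffice for a fixed $f$: a countable union of arcs whose images all merely touch a common value $y^\ast$ (without straddling it) satisfies $\dim_t B(x,r)>0$ at every point and scale yet leaves $y^\ast\in\inter f(K)$ unstraddled, so genericity must be used essentially. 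The plan to close the gap is to rule out precisely this configuration, namely an interior value approached from both sides by straddling intervals of shrinking length with no single continuum crossing it: I would fix a countable family of nondegenerate continua realizing the hypothesis at every centre and scale (one in each ball $B(x_k,1/j)$) and prove that for the generic $f$ the straddled set is not only dense but \emph{relatively closed} in $\inter f(K)$; since a relatively clopen subset that meets every interval component of the open set $\inter f(K)$ must be all of it, the equality then follows. Establishing this relative closedness, which is where the full force of $\dim_t B(x,r)>0$ at every scale enters, is the crux of the argument.
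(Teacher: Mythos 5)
There is a genuine gap, and you have located it yourself: everything up to and including the density of the straddled set in $\inter f(K)$ is fine (the straddling move, the openness of the straddling condition, and the Baire-property bookkeeping are all correct and unproblematic), but the passage from density to the actual equality $\bigcup_{C}\inter f(C)=\inter f(K)$ is never carried out. Your proposed route --- show that for the generic $f$ the straddled set is relatively closed in $\inter f(K)$ and then use connectedness of the interval components of $\inter f(K)$ --- is only announced, not executed, and it is not clear how to establish relative closedness by a countable intersection of open dense sets: the unstraddled set is a closed subset of $\inter f(K)$ that varies with $f$ and cannot be indexed by rationals, which is exactly the obstruction you flagged for the density step. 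As written, the proposal proves a strictly weaker statement.

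The paper closes the gap by a finite, quantitative covering argument rather than by a second Baire-category pass on the $y$-side. Fix $n$; it suffices to handle values $y_0$ with $B(y_0,1/n)\subseteq\inter f(K)$. Cover $K$ by finitely many balls $B(x_i,\delta_1)$ on which $f_0$ oscillates by at most $\varepsilon$, pick disjoint subballs, and in each a nondegenerate continuum $C_i$ with two marked points $u_i\neq v_i$. Perturb $f_0$ to $g_0$ so that $g_0(u_i)=f_0(x_i)-\varepsilon_i$, $g_0(v_i)=f_0(x_i)+\varepsilon_i$, where the $\varepsilon_i\in[\varepsilon,2\varepsilon]$ are chosen to make the $2k$ endpoint values $f_0(x_i)\pm\varepsilon_i$ pairwise distinct, with minimal gap $\theta$. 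Then for every $f$ in the ball $B(g_0,r_0)$ with $r_0=\min\{\varepsilon,\theta/6,1/(3n)\}$ and every admissible $y_0$: the point $z_0$ with $f(z_0)=y_0$ lands in some $B(x_i,\delta_1)$, so $y_0$ lies within $r_0$ of $[f_0(x_i)-\varepsilon_i,f_0(x_i)+\varepsilon_i]$; either $y_0$ is deep inside this interval, hence in $(f(u_i),f(v_i))\subseteq\inter f(C_i)$, or $y_0$ is within $r_0$ of an endpoint, in which case the value $y_0+3r_0$ (still in $\inter f(K)$) is attained in some \emph{other} ball $B(x_j,\delta_1)$, and the $\theta$-separation of the endpoints forces $y_0$ to lie deep inside $(g_0(u_j),g_0(v_j))$, hence in $\inter f(C_j)$. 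Thus an entire ball of functions simultaneously straddles \emph{every} interior value at distance $\ge 1/n$ from $\partial f(K)$, which is the uniformity your density argument lacks. If you want to complete your write-up, this ``separated endpoints plus the shifted value $y_0+3r_0$'' device is the missing idea.
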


\begin{remark} If $K_{0}$ is the triadic Cantor set then $K=K_{0}\times [0,1]$
has uncountably many connected components but it is a `homogeneous'
self-similar set. Lemma \ref{cpt} actually holds for arbitrary compact metric
spaces, but we will not need this fact.
\end{remark}

\begin{proof}[Proof of Lemma \ref{cpt}]
Consider $$\mathcal{F}=\left\{f\in C(K): \bigcup_{C\in \mathcal{C}} \inter
f(C)=\inter f(K)\right\},$$
and for all $n\in \mathbb{N}^{+}$ let
$$\mathcal{F}_{n}=\left\{f\in C(K): \forall y\in f(K)\setminus B\left(\partial f(K),1/n\right), ~
\exists C\in \mathcal{C} \textrm{ s. t. } y\in \inter f(C)\right\}.$$
We must prove that $\mathcal{F}$ is co-meager in $C(K)$. Since
$\mathcal{F}=\bigcap_{n\in \mathbb{N}^{+}} \mathcal{F}_{n}$, it is enough to show that the
$\mathcal{F}_{n}$'s are co-meager in $C(K)$. Let us fix $n\in  \mathbb{N}^{+}$ and
let $f_0\in C(K)$ and $\varepsilon>0$ be arbitrary. It is sufficient
to show that there is a non-empty ball $B(g_{0},r_0)\subseteq
\mathcal{F}_{n}\cap B(f_0,4\varepsilon)$.

Since  $f_{0}$ is uniformly continuous on $K$ there is a
$\delta_1>0$ such that if $x,z\in K$ and $d(x,z)\leq \delta_1$ then
$|f_{0}(x)-f_{0}(z)|\leq \varepsilon$. By the compactness of $K$
there is a finite set $\{ x_{1},...,x_{k} \}$ such that
$\bigcup_{i=1}^{k}B(x_{i},\delta_1)=K$. Choose $0<\delta_{2}<\delta_1$
such that the balls $B(x_{i},\delta_{2})$ are disjoint. The
conditions of the lemma imply that for every $i\in \{1,\dots,k\}$ we
have $\dim_{t} B(x_{i},\delta_{2}/2)>0$. Thus there exist
non-trivial connected components $C_i$ of $B(x_{i},\delta_{2}/2)$
for all $i\in \{1,\dots,k\}$, see \cite[6.2.9. Thm.]{RE}. For all
$i\in \{1,\dots, k\}$ let us choose $u_{i},v_{i}\in C_{i}$, $u_i\neq
v_i$ and select $\varepsilon_{i}\in [\varepsilon, 2\varepsilon]$
such that the set
$$E=\left\{f_{0}(x_i)+ \varepsilon_{i}: i=1, \dots,
k \right\}\cup \left\{f_{0}(x_i)- \varepsilon_{i}: i=1, \dots, k
\right\}$$
has $2k$ many elements. Let $\theta=\min\{d(x,y): x,y\in E, x\neq
y\}>0$. Clearly for all $x\in B(x_i,\delta_1)$, $i\in \{1,\dots,k\}$
we have $f_{0}(x)\in [f_{0}(x_i)-\varepsilon,
f_{0}(x_i)+\varepsilon]\subseteq [f_{0}(x_i)-\varepsilon_i,
f_{0}(x_i)+\varepsilon_i]$. Hence Tietze's Extension Theorem implies
that there exists a $g_0\in C(K)$ such that $g_0(x)=f_0(x)$ if $x\in
K\setminus \bigcup_{i=1}^{k} U(x_i,\delta_2)$ and for all $i\in
\{1,\dots, k\}$ we have $g_0(u_i)=f_0(x_i)-\varepsilon_i$,
$g_0(v_i)=f_0(x_i)+\varepsilon_i$ and
\begin{equation} \label{eq1}
g_{0}(x)\in \left[f_0(x_i)-\varepsilon_i,f_0(x_i)+\varepsilon_i
\right], \quad x\in B(x_i,\delta_1).
\end{equation}
Therefore, using that the oscillations of $f_0$ on the
$B(x_i,\delta_1)$'s are at most $\varepsilon$ and $\varepsilon_i\leq
2\varepsilon$ for all $i\in \{1,\dots,k\}$, we have $g_0\in
B(f_0,3\varepsilon)$. Set $r_0=\min \left\{\varepsilon, \theta/6,
1/(3n) \right\}$. Since $B(g_0,r_0)\subseteq
B(g_0,\varepsilon)\subseteq B(f_0,4\varepsilon)$,  it is enough to
prove that $B(g_0,r_0)\subseteq \mathcal{F}_{n}$. Let $f\in B(g_{0},r_0)$
and $y_0\in f(K)\setminus B(\partial f(K),1/n)$, that is,
$B(y_0,1/n)\subseteq \inter f(K)$. It is enough to verify that there
is an $i\in \{1,\dots, k\}$ such that $y_0\in \inter f(C_i)$. (Note
that every $C_i$ is contained in a member of $\mathcal{C}$.) Let us choose
$z_0 \in K$ with $f(z_0)=y_0$ and fix $i \in \{1, \dots ,k\}$ such
that $z_0\in B(x_{i}, \delta_1)$. Then equation $\eqref{eq1}$ and
$f\in B(g_0,r_0)$ imply that $y_0\in [f_{0}(x_i)-\varepsilon_i -r_0,
f_{0}(x_i)+\varepsilon_i+r_0]$.

First assume that $y_0\in (f_{0}(x_i)-\varepsilon_i +r_0,
f_{0}(x_i)+\varepsilon_i-r_0)=(g_{0}(u_i)+r_0,g_0(v_i)-r_0)$. Then
$f\in B(g_0,r_0)$ and the connectedness of $C_i$ imply $y_0\in
(f(u_i),f(v_i))\subseteq \inter f(C_i)$.

Finally, suppose that $y_0\in [f_{0}(x_i)-\varepsilon_i -r_0,
f_{0}(x_i)-\varepsilon_i+r_0]$ or
\begin{equation} \label{eq2} y_0\in [f_{0}(x_i)+\varepsilon_i -r_0,
f_{0}(x_i)+\varepsilon_i+r_0].
\end{equation}
We may assume by symmetry that \eqref{eq2} holds. Since $y_0+3r_0\in
B(y_0, 1/n)\subseteq \inter f(K)$, there exists $z_1\in K$ such that
$f(z_1)=y_0+3r_0$ and $j\in \{1,\dots,k\}$ such that $z_1\in
B(x_j,\delta_1)$. From $f\in B(g_0,r_0)$ and \eqref{eq1} it follows
that
\begin{equation} \label{eq3} y_0+3r_0\in \left[f_0(x_j)-\varepsilon_j-r_0,
f_0(x_j)+\varepsilon_j+r_0\right].
\end{equation}
Equation \eqref{eq2} implies
$y_0+3r_0>f_{0}(x_i)+\varepsilon_i+r_0$, thus we have $j\neq i$.
Equation \eqref{eq2} also implies
$|y_0-(f_{0}(x_i)+\varepsilon_i)|\leq r_0$. Therefore the triangle
inequality and the definition of $\theta$ yield
\begin{align}  \label{eq4} \left|y_0-(f_0(x_j)- \varepsilon_j)\right|&\geq
\left|(f_0(x_j)- \varepsilon_j)-(f_0(x_i)+\varepsilon_i)\right|-
\left|y_0-(f_{0}(x_i)+\varepsilon_i)\right| \notag \\
&\geq \theta-r_0>4r_0.
\end{align}
Then \eqref{eq3} implies $y_0< f_0(x_j)+\varepsilon_j-r_0$ and
$y_0\geq f_0(x_j)-\varepsilon_j-4r_0$, thus \eqref{eq4} yields
$y_0\in
\left(f_0(x_j)-\varepsilon_j+r_0,f_0(x_j)+\varepsilon_j-r_0\right)=\left(g_{0}(u_j)+r_0,g_0(v_j)-r_0\right).$
Hence $f\in B(g_0,r_0)$ and the connectedness of $C_j$ imply $y_0\in
(f(u_j),f(v_j))\subseteq \inter f(C_j)$. This completes the proof.
\end{proof}

\begin{lemma} \label{lmax1} Let $K$ be a compact metric space with a fixed $x_0\in
K$. Let $K_n\subseteq K,$ $n\in \mathbb{N}$ be compact sets such that
\begin{enumerate}[(i)]

\item  $\dim_{t}K_n>0$ for all $n\in \mathbb{N}$ and

\item $\diam \left(K_n\cup \{x_0\}\right)\to 0$ if $n\to
\infty$.
\end{enumerate}

Then for the generic $f\in C(K)$ we have $f(x_0) \in f(K_n)$ for
infinitely many $n\in \mathbb{N}$.
\end{lemma}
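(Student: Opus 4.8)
The plan is to produce a dense $G_\delta$ subset of $C(K)$ contained in
$$\mathcal{F}=\{f\in C(K): f(x_0)\in f(K_n)\text{ for infinitely many }n\in\mathbb{N}\},$$
since by the Baire category facts recalled in the Preliminaries this forces $\mathcal{F}$ to be co-meager. The obstacle to overcome at the outset is that the condition $f(x_0)\in f(K_n)$ is closed rather than open, so writing $\mathcal{F}=\bigcap_{N}\bigcup_{n\ge N}\{f:f(x_0)\in f(K_n)\}$ exhibits it only as an $F_{\sigma\delta}$ set and yields no category information. My remedy will be to replace this closed condition by an open sufficient condition obtained from connectedness.

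Since $\dim_t K_n>0$, the compact set $K_n$ is not totally disconnected, so it has a non-trivial connected component $C_n$ (see \cite[6.2.9. Thm.]{RE}); being a connected metric space with more than one point, $C_n$ is infinite, so I can fix distinct $u_n,v_n\in C_n\setminus\{x_0\}$. I will then set
$$\mathcal{U}_n=\left\{f\in C(K):\bigl(f(u_n)-f(x_0)\bigr)\bigl(f(v_n)-f(x_0)\bigr)<0\right\}.$$
Each $\mathcal{U}_n$ is open because the evaluations $f\mapsto f(u_n),f(v_n),f(x_0)$ are continuous, and if $f\in\mathcal{U}_n$ then $f(x_0)$ lies strictly between $f(u_n)$ and $f(v_n)$; as $f(C_n)$ is a connected, hence convex, subset of $\mathbb{R}$ containing $f(u_n)$ and $f(v_n)$, it contains $f(x_0)$, so $f(x_0)\in f(C_n)\subseteq f(K_n)$. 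Consequently $\bigcap_{N}\bigcup_{n\ge N}\mathcal{U}_n$ is a $G_\delta$ set contained in $\mathcal{F}$, and it remains to show that each open set $\mathcal{G}_N=\bigcup_{n\ge N}\mathcal{U}_n$ is dense.

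For density I would fix $N$, $f_0\in C(K)$ and $\varepsilon>0$, choose $\delta>0$ with $|f_0(x)-f_0(z)|\le\varepsilon/3$ whenever $d(x,z)\le\delta$ by uniform continuity, and then invoke hypothesis (ii) to pick $n\ge N$ with $\diam(K_n\cup\{x_0\})\le\delta$. This is exactly where (ii) is essential: it forces $|f_0(u_n)-f_0(x_0)|\le\varepsilon/3$ and $|f_0(v_n)-f_0(x_0)|\le\varepsilon/3$, so a small perturbation suffices to straddle $f_0(x_0)$. Using Tietze's Extension Theorem on the three distinct points $x_0,u_n,v_n$, I would pick $g\in C(K)$ with $\|g\|\le\varepsilon/2$, $g(x_0)=0$, $g(u_n)=-\varepsilon/2$ and $g(v_n)=\varepsilon/2$, and set $f=f_0+g$; then $\|f-f_0\|<\varepsilon$ and the above estimates give $f(u_n)<f(x_0)<f(v_n)$, so $f\in\mathcal{U}_n\subseteq\mathcal{G}_N$. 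Thus every $\mathcal{G}_N$ is dense and open, their intersection is a dense $G_\delta$ set inside $\mathcal{F}$, and $\mathcal{F}$ is co-meager. The only genuinely delicate step is the reformulation in the first two paragraphs; after that, hypothesis (ii) together with a three-point Tietze perturbation makes the density verification routine.
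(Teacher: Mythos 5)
Your proposal is correct and follows essentially the same route as the paper: both arguments extract a non-trivial connected component $C_n$ of $K_n$ from $\dim_t K_n>0$, use hypothesis (ii) to make the oscillation of $f_0$ on $C_n\cup\{x_0\}$ small, and then apply Tietze's Extension Theorem to perturb $f_0$ so that $f(x_0)$ is straddled by two values of $f$ on $C_n$, a condition stable under small perturbations. The only cosmetic difference is bookkeeping: you exhibit a dense $G_\delta$ set $\bigcap_N\bigcup_{n\ge N}\mathcal{U}_n$ directly, while the paper shows the complementary sets $\mathcal{F}_N$ are nowhere dense; these are the same argument.
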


\begin{proof} Clearly it is enough to
show that the sets $$\mathcal{F}_{N}=\{f\in C(K): f(x_0) \notin f(K_n) \textrm{
for all } n\geq N \}$$ are nowhere dense in $C(K)$ for all $N\in
\mathbb{N}$. Let $f_0\in C(K)$ and $\varepsilon>0$ be arbitrary, it is
enough to find a ball in $B(f_0,2\varepsilon)\setminus \mathcal{F}_{N}$. The
compact $K_n$'s have positive topological dimension, therefore they are not
totally disconnected, see \cite[6.2.9. Thm.]{RE}. Let us choose a
non-trivial connected component $C_n\subseteq K_n$ for every $n\in
\mathbb{N}$.  We can choose by $(ii)$ an $n_0\in \mathbb{N}$ such that $n_0\geq
N$ and $\diam f_0\left(C_{n_0}\cup \{x_0\}\right)<\varepsilon$.
Tietze's Extension Theorem implies that there is an $f \in
B(f_0,\varepsilon)$ such that $\diam f(C_{n_0})>0$ and $f(x_0)$ is
the midpoint of $f(C_{n_0})$. If $\delta=\min\left\{\varepsilon,
\frac {1}{4}\diam f(C_{n_0})\right\}$ then for all $g\in
B(f,\delta)$ we have $g(x_0)\in g(C_{n_0})\subseteq g(K_{n_0})$, so
$g\notin \mathcal{F}_{N}$. Thus $B(f,\delta) \subseteq
B(f_0,2\varepsilon)\setminus \mathcal{F}_{N}$.
\end{proof}

The following lemma is probably known, but we could not find an
explicit reference, so we outline its proof.

\begin{lemma} The Smith-Volterra-Cantor set $S$ is an attractor of an IFS.
\end{lemma}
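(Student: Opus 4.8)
The plan is to realize $S$ as the attractor of an iterated function system consisting of two contractions $\Psi_0,\Psi_1\colon S\to S$ whose images are the two first level elementary pieces $S\cap I_0$ and $S\cap I_1$, where $I_0=[0,3/8]$ and $I_1=[5/8,1]$ are the intervals remaining after the first deletion. Since the construction is symmetric about $1/2$ we have $S=1-S$ and hence $S\cap I_1=1-(S\cap I_0)$; so it is enough to build $\Psi_0$ and put $\Psi_1=1-\Psi_0$, which automatically gives $\Psi_0(S)\cup\Psi_1(S)=(S\cap I_0)\cup(S\cap I_1)=S$.

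The obstacle is that $S$ is not self-similar: rescaling $S\cap I_0$ from $I_0$ back to $[0,1]$ by the factor $8/3$ produces not $S$ but the shifted set $S_1$ built by the same procedure with the first deletion omitted (so the $m$th deletion ratio of $S_1$ is the $(m+1)$st of $S$). Thus no affine map carries $S$ onto $S\cap I_0$, and a genuinely non-affine contraction is needed. I would take $\Psi_0=\tfrac38\,h$, where $h\colon S\to S_1$ is the natural order preserving homeomorphism matching the $m$th level elementary pieces of $S$ with those of $S_1$; then $\Psi_0(S)=\tfrac38 S_1=S\cap I_0$, and the whole problem reduces to the estimate $\Lip(h)<8/3$, which guarantees $\Lip(\Psi_0)=\tfrac38\Lip(h)<1$.

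The heart of the argument, and essentially its only computation, is this Lipschitz bound. I would extend $h$ to the monotone homeomorphism $F\colon[0,1]\to[0,1]$ that maps each $m$th level interval of $S$ onto the corresponding interval of $S_1$ and is affine on each deleted gap. Let $L_m,L_m'$ be the common lengths of the $m$th level intervals of $S$ and of $S_1$, and $a_m,a_m'$ the lengths of their $m$th level gaps. Two elementary facts suffice: first, $a_m'=\tfrac23 a_m$, since both $S\cap I_0$ and $S_1$ inherit the absolute gap length $4^{-(m+1)}$ up to the rescaling $8/3$; second, $L_m'\le\tfrac43 L_m$ for every $m$, which follows from $2^mL_m\to\tfrac12$ and $2^mL_m'\to\tfrac23$ (the Lebesgue measures of $S$ and $S_1$). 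Hence every $m$th level piece $P$ of $[0,1]$, whether an interval or a gap, satisfies $|F(P)|\le\tfrac43|P|$. For arbitrary $x<y$, covering $[x,y]$ by the finitely many $m$th level pieces meeting it and letting $m\to\infty$ (the total length of these pieces tends to $y-x$, while the two end pieces become negligible) gives $F(y)-F(x)\le\tfrac43(y-x)$. Therefore $\Lip(h)\le\Lip(F)\le 4/3$, so $\Lip(\Psi_0)\le\tfrac38\cdot\tfrac43=\tfrac12<1$, and the same holds for $\Psi_1$. I expect the one genuinely delicate point to be the verification of the interval length comparison $L_m'\le\tfrac43 L_m$, i.e.\ controlling how the two constructions scale relative to one another; the gap comparison and the passage to the Lipschitz bound are then routine.
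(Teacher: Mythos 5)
Your proposal is correct and is essentially the paper's own proof: the IFS consists of the same two natural (coding) homeomorphisms of $S$ onto its left and right halves, and the Lipschitz constant $1/2$ is obtained by the same level-by-level comparison of interval and gap lengths (your $\tfrac38\cdot\tfrac43=\tfrac12$ and $\tfrac38\cdot\tfrac23=\tfrac14$ are exactly the paper's $b_{m+1}\le b_m/2$ and $a_{m+1}\le a_m/2$ after undoing the factorization through $S_1$). The one loose point is that $L_m'\le\tfrac43 L_m$ does not formally follow from the limits $2^mL_m\to\tfrac12$, $2^mL_m'\to\tfrac23$ alone, but it is a one-line computation: $L_m'/L_m=\tfrac43\cdot\bigl(1+2^{-m-1}\bigr)/\bigl(1+2^{-m}\bigr)<\tfrac43$.
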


\begin{proof} In the $n$th step of the construction we remove
$2^{n-1}$ many disjoint open intervals of length $a_{n}=1/2^{2n}$,
the remaining $2^{n}$ disjoint, closed $n$th level intervals are of
length $b_{n}=\frac{1}{2^n}\left(1-\sum
_{i=1}^{n}2^{i-1}a_{i}\right)=1/2^{n+1}+1/2^{2n+1}$. Let $\pi\colon
S\to \{0,1\}^{\mathbb{N}}$ be the natural homeomorphism, that is,
for $x\in S$ and $n\in \mathbb{N}$ we define $\pi(x)(n)$, as
follows. There is a unique $n$th level interval $I_n$ and a unique
$(n+1)$st level interval $I_{n+1}$ such that $x\in I_n$ and $x\in
I_{n+1}$. Then $I_{n+1}$ is either the left or the right hand side
interval of $I_{n}$. If it is the left hand side interval then
$\pi(x)(n)=0$, otherwise $\pi(x)(n)=1$. Let
\begin{align}
 &\varphi _{1}\colon S\to S\cap \left[0, 1/2 \right], ~
\varphi _{1}(x) = \pi^{-1}\left(0 \ \! \hat{} \ \! \pi(x)\right), \notag \\
&\varphi_{2}\colon S\to S\cap \left[ 1/2 ,1 \right], ~ \varphi
_{2}(x) = \pi^{-1}\left(1 \ \! \hat{} \ \! \pi(x)\right)  \label{fidef}
\end{align}
be the natural homeomorphisms onto the left and the right half of
$S$ (where $\hat{}$ stands for concatenation). Clearly, $S=\varphi
_{1}(S)\cup \varphi _{2}(S)$, so it is sufficient to prove that
$\varphi _{1}$ and $\varphi_{2}$ are contractions. By symmetry it is
enough to show that $\varphi _{1}$ is a Lipschitz map with
$\Lip(\varphi _{1})\leq  1/2$, that is, for all $x,z\in S$
\begin{equation} \label{fi} |\varphi _{1}(x)-\varphi _{1}(z)|\leq
\frac{|x-z|}{2}.  \end{equation}
The endpoints of the intervals at the construction are dense in $S$.
Thus we may assume for the proof of \eqref{fi} that $x,z$ are both
endpoints of some $n$th level intervals and $x<z$. Let us assume
that in the interval $[x,z]$ there are $\beta_n=\beta_{n,x,z}$ many
intervals of length $b_{n}$ and there are
$\alpha_{i}=\alpha_{i,x,z}$ many open intervals of length $a_{i}$,
$i\in \{1,\dots,n\}$.  In the interval $[\varphi _{1}(x),\varphi
_{1}(z)]$ there are  $\beta_n$ many closed intervals of length
$b_{n+1}$ and there are $\alpha_{i}$ many open intervals of length
$a_{i+1}$, $i\in \{1,\dots ,n\}$. These intervals are disjoint, and
their union is $[x,z]$ and $[\varphi _{1}(x),\varphi _{1}(z)]$
(apart from the endpoints $x,z$ and
$\varphi_{1}(x),\varphi_{1}(z)$), respectively. We obtain
$|x-z|=\beta _{n} b_{n}+\sum _{i=1}^{n} \alpha _{i} a_{i}$ and
$|\varphi _{1}(x)-\varphi _{1}(z)|=\beta _{n} b_{n+1}+\sum
_{i=1}^{n} \alpha _{i} a_{i+1}$. Hence for \eqref{fi} it is enough
to prove that $b_{n+1}\leq b_{n}/2$ and $a_{i+1}\leq a_{i}/2$ for
all $i\in \{1,\dots,n\}$, but it is clear from the definitions of
the $b_{n}$'s and the $a_{n}$'s.
\end{proof}

\section{Level sets on fractals}

Let $K$ be a compact metric space. If $\dim_{t}K=0$ then the generic continuous function is one-to-one on
$K$ by Lemma \ref{one-to-one}, hence every non-empty level set is a single point.

Thus in the sequel we  assume that $\dim_{t}K>0$.

\begin{definition} \label{suppd}
If $K$ is a compact metric space then let
$$\supp  K=\left\{x\in K: \forall r>0,
\  \dim_{tH} B(x,r)=\dim_{tH} K\right\}.$$
We say that $K$ is \emph{homogeneous for the topological Hausdorff
dimension} if $\supp K=K$.
\end{definition}

\begin{remark} \label{rsupp}
The stability of the topological Hausdorff dimension for closed sets
clearly yields $\supp K \neq \emptyset$. If $K$ is self-similar then
it is also homogeneous for the topological Hausdorff dimension.
\end{remark}

 We proved in \cite{BBE} that if
$K$ is homogeneous for the topological Hausdorff dimension then for
the generic $f\in C(K)$ for the generic $y\in f(K)$ we have
$\dim_{H} f^{-1}(y)=\dim_{tH}K-1$. Now we prove the opposite
direction.

\begin{theorem} \label{tip} Let $K$ be a compact metric space with $\dim_{t}K>0$. The following
statements are equivalent.
\begin{enumerate}[(i)]
\item  For the generic $f\in C(K)$ for the generic $y\in f(K)$ we have $\dim_{H}
f^{-1}(y)=\dim_{tH}K-1$.
\item  $K$ is homogeneous for the topological Hausdorff dimension.
\end{enumerate}
\end{theorem}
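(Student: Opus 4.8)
The plan is to take the implication (ii) $\Rightarrow$ (i) as given, since it is exactly the result of \cite{BBE} quoted immediately before the statement, and to prove the contrapositive of (i) $\Rightarrow$ (ii): assuming $K$ is \emph{not} homogeneous for the topological Hausdorff dimension, I will produce a non-meager family of $f$ for which the set of ``good'' values $y$ is not co-meager, so that (i) fails. Write $D=\dim_{tH}K$. By the failure of homogeneity there is a closed ball $B=B(x_0,r_0)$ with $D':=\dim_{tH}B<D$; let $V=U(x_0,r_0)$ be the corresponding open ball and $F=K\setminus V$, so that $\overline V\subseteq B$ and $\partial V\subseteq F$.

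First I would control the level sets that fall inside the ball. Applying Corollary \ref{catlem2}(ii) to the restriction map $R\colon C(K)\to C(B)$, $R(f)=f|_B$, the generic $f\in C(K)$ restricts to a generic element of $C(B)$. Hence, using Theorem \ref{ft}(i) on the compact space $B$ when $\dim_t B>0$, and Lemma \ref{one-to-one} when $\dim_t B=0$, the generic $f$ satisfies $\dim_H (f|_B)^{-1}(y)<D-1$ for every $y$. Call this co-meager set $\mathcal{G}_1$. This is precisely where the strict gap $\dim_{tH}B-1<D-1$ is exploited, and where one must split into the totally disconnected and the $\dim_t B>0$ cases.

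Next comes the confinement step. I would consider $\mathcal{G}_2=\{f\in C(K):\max_{\overline V}f>\max_F f\}$. This set is open, since $f\mapsto\max_{\overline V}f$ and $f\mapsto\max_F f$ are $1$-Lipschitz in the supremum norm, and it is non-empty, as a Urysohn function separating $x_0$ from the disjoint closed set $F$ lies in it. For $f\in\mathcal{G}_2$ the global maximum $\max_K f=\max_{\overline V}f$ is attained only in $V$ (not on $\partial V\subseteq F$), so $W:=\{y\in f(K):y>\max_F f\}$ is a non-empty relatively open subset of $f(K)$ with $f^{-1}(y)\subseteq V\subseteq B$ for every $y\in W$.

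Finally I would combine the two pieces. For $f\in\mathcal{G}_1\cap\mathcal{G}_2$ and $y\in W$ we have $f^{-1}(y)=(f|_B)^{-1}(y)$, whence $\dim_H f^{-1}(y)<D-1$; thus $\{y\in f(K):\dim_H f^{-1}(y)=D-1\}$ omits the non-empty open set $W$ and is not co-meager in $f(K)$. Since $\mathcal{G}_1$ is co-meager and $\mathcal{G}_2$ is a non-empty open set, $\mathcal{G}_1\cap\mathcal{G}_2$ is non-meager, and a co-meager set cannot be disjoint from a non-meager one; therefore the collection of $f$ verifying (i) is not co-meager, i.e.\ (i) fails. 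The main obstacle is exactly this confinement: producing, for a topologically large family of $f$, a genuine relatively open set of level values whose level sets are trapped inside the low-dimensional ball. The ``poke above $F$'' device handles this cleanly, but the dimension bookkeeping in the final strict inequality is delicate and forces the separate treatment of the case $\dim_t B=0$ (where one only gets $\dim_H f^{-1}(y)\le 0$ and hence needs $D>1$).
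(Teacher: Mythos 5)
Your overall strategy coincides with the paper's: produce a non-meager set of functions $f$ for which a non-empty relatively open subset of $f(K)$ consists of values $y$ with $\dim_H f^{-1}(y)<\dim_{tH}K-1$, so that the inner genericity in (i) fails. The localization device is different, though. The paper exhausts $K\setminus \supp K$ by the compact sets $K_n=\{x: \dist(x,\supp K)\geq 1/n\}$, shows $\dim_{tH}K_n<\dim_{tH}K$ by a finite covering argument plus countable stability, produces for each $f$ near a suitable $f_0$ an interval $I_f$ of values attained only off $\supp K$, and traps each level set $f^{-1}(y)$, $y\in I_f$, in some $K_{n_{f,y}}$ by compactness. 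You instead fix a single deficient ball $B$ and use the open condition $\max_{\overline V}f>\max_{K\setminus V}f$ to trap an open set of values inside $V$. Your confinement step ($\mathcal{G}_2$, the set $W$, the second-category bookkeeping) is correct and arguably cleaner than the paper's.

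The gap is exactly where you flag it, and it is genuine. When $\dim_t B=0$ you only get $\dim_H (f|_B)^{-1}(y)\leq 0$, so you need $\dim_{tH}K>1$; but the hypothesis $\dim_t K>0$ only yields $\dim_{tH}K\geq 1$, and you neither prove the strict inequality nor arrange for $B$ to have positive topological dimension. Neither can be done in general: for $K=[0,1]\cup C$ with $C$ a Cantor set disjoint from $[0,1]$ one has $\dim_{tH}K=1$ and $\supp K=[0,1]\neq K$, and every ball $B$ with $\dim_{tH}B<\dim_{tH}K$ satisfies $\dim_t B=0$. Worse, in this example the implication (i)$\Rightarrow$(ii) itself fails: Theorem \ref{ft}(i) gives $\dim_H f^{-1}(y)\leq 0$ for the generic $f$ and all $y$, while every non-empty level set has Hausdorff dimension at least $0$, so $\dim_H f^{-1}(y)=0=\dim_{tH}K-1$ for every $y\in f(K)$ and (i) holds although (ii) fails. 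So the case you isolate cannot be repaired; it requires an extra hypothesis such as $\dim_{tH}K>1$ (under which your argument is complete). For what it is worth, the paper's own proof has the same unacknowledged defect: it applies Theorem \ref{ft} to the sets $K_n$ without verifying $\dim_t K_n>0$, which fails in the example above.
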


\begin{proof} $(ii)\Rightarrow (i)$: See \cite[Thm. 6.22.]{BBE}.

$(i)\Rightarrow (ii)$: Assume to the contrary that $K\setminus \supp
K\neq \emptyset$. Then there exist $f_0\in C(K)$ and
$\varepsilon_0>0$ such that for all $f\in B(f_{0}, \varepsilon_0)$
we have $f(K)\setminus f\left(\supp K\right)\neq \emptyset$. Let us
choose for all $f\in  B(f_{0}, \varepsilon_0)$ an interval $I_f$
such that $ I_f \cap f\left(\supp K\right) =\emptyset$ and $I_f\cap
f(K\setminus \supp K) \neq \emptyset$. Let us define for all $n\in
\mathbb{N}^{+}$
$$K_n=\{x\in K: \dist(x,\supp K)\geq 1/n\}.$$
Then the $K_n$'s are compact and $\bigcup_{n\in \mathbb{N}^{+}} K_n=K\setminus
\supp K$. The definition of $\supp K$ and the compactness of $K_n$
imply that $K_n$ can be covered with finitely many closed balls of
topological Hausdorff dimension less than $\dim_{tH}K$. Then the
stability of the topological Hausdorff dimension for closed sets
implies
\begin{equation} \label{extra} \dim_{tH}K_{n}<\dim_{tH}K \quad (n\in \mathbb{N}^{+}).
\end{equation}
For all $n\in \mathbb{N}^{+}$ let
$$\mathcal{F}_{n}=\left\{f\in C(K_n): \dim_{H}f^{-1}(y)\leq \dim_{tH}K_n-1 \textrm{ for all } y\in
\mathbb{R}\right\}.$$
Define $R_{n}\colon K\to K_n$, $R_n(f)=f|_{K_n}$ and let
$\mathcal{F}=\bigcap_{n\in \mathbb{N}^{+}}R_{n}^{-1}(\mathcal{F}_{n})$. Theorem \ref{ft}
yields that the $\mathcal{F}_n$'s are co-meager in $C(K_n)$, and it follows
from Corollary \ref{catlem2}  that the $R_{n}^{-1}(\mathcal{F}_{n})$'s are
co-meager in $C(K)$. As $\mathcal{F}$ is the intersection of countable many
co-meager sets, it is also co-meager in $C(K)$. If $f\in
B(f_0,\varepsilon)$ and $y\in I_f\cap f(K)$ then the definition of
$I_f$ and the compactness of $f^{-1}(y)$ imply that there is an
$n_{f,y}\in \mathbb{N}^{+}$ such that $f^{-1}(y)\subseteq K_{n_{f,y}}$. If
$f\in \mathcal{F}$ then for all $y\in I_f\cap f(K)$ the definition of
$n_{f,y}$, the definition of $\mathcal{F}$ and \eqref{extra} imply
\begin{align*} \dim_{H} f^{-1}(y)&=\dim_{H} \left(f^{-1}(y)\cap
K_{n_{f,y}}\right) \leq \dim_{tH}K_{n_{f,y}}-1 \\
&<\dim_{tH}K-1.
\end{align*}
This contradicts $(i)$, and the proof is complete.
\end{proof}

B. Kirchheim showed in \cite{BK} that for the generic $f\in
C\left([0,1]^{d}\right)$ for every $y\in \inter f\left([0,1]^{d}
\right)$ we have $\dim_{H} f^{-1}(y)=d-1$. We generalize this result
for weakly self-similar compact metric spaces.

\begin{definition} \label{defweakselfsim} Let $K$ be a compact metric space. We say
that $K$ is \emph{weakly self-similar} if for all $x\in K$ and $r>0$
there exist a compact set $K_{x,r}\subseteq B(x,r)$ and a
bi-Lipschitz map $\phi_{x,r}\colon K_{x,r} \to K$.
\end{definition}

\begin{remark} If $K$ is self-similar then it is also weakly
self-similar. If $K$ is weakly self-similar then it is also
homogeneous for the topological Hausdorff dimension.
\end{remark}

\begin{theorem} \label{thinty} Let $K$ be a weakly self-similar compact metric
space. Then for the generic $f\in C(K)$ for any $y\in \inter f(K)$
$$\dim_{H} f^{-1}(y)=\dim_{tH}K-1.$$  \end{theorem}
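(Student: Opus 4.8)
The upper bound is for free: Theorem \ref{ft}(i) already gives, for the generic $f$, that $\dim_{H} f^{-1}(y)\le \dim_{tH}K-1$ for every $y\in\mathbb{R}$. So the entire content is the matching lower bound, i.e. that for the generic $f$ we have $\dim_{H} f^{-1}(y)\ge \dim_{tH}K-1$ for \emph{every} $y\in\inter f(K)$. Since this conclusion is a countable intersection over a sequence $d_k\uparrow\dim_{tH}K$, the plan is to fix $d<\dim_{tH}K$ and prove that the generic $f$ satisfies $\dim_{H} f^{-1}(y)\ge d-1$ for every $y\in\inter f(K)$. The natural starting point is the weaker fact we already have: a weakly self-similar $K$ is homogeneous for the topological Hausdorff dimension, so Theorem \ref{tip} applies and yields $\dim_{H} f^{-1}(y)\ge\dim_{tH}K-1$ on a \emph{co-meager} set of $y$. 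The whole difficulty is thus to upgrade ``co-meager $y$'' to ``every interior $y$''. The obstruction to a soft argument is that $y\mapsto\dim_{H} f^{-1}(y)$ is not semicontinuous in any useful direction, so one cannot pass to a limit along good values $y_k\to y_0$; the dimension must be produced \emph{at} $y_0$ itself.

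The extra input that makes this possible is the genuine self-reference of weak self-similarity, which I would exploit through a fixed countable family of bi-Lipschitz copies. First I would choose, for a dense set of centres $x_j$ and radii $r_j\to 0$, compact sets $K_j=K_{x_j,r_j}\subseteq B(x_j,r_j)$ with bi-Lipschitz maps $\phi_j\colon K_j\to K$, arranged so that every ball of $K$ contains copies of arbitrarily small radius. By Lipschitz invariance (Theorem \ref{prop}(iii)) each copy has $\dim_{tH}K_j=\dim_{tH}K$, and since $\dim_{t}$ is a topological invariant, monotonicity gives $\dim_{t}B(x,r)\ge\dim_{t}K_j=\dim_{t}K>0$, so Lemma \ref{cpt} applies both to $K$ and to each copy. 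The restriction maps $R_j\colon C(K)\to C(K_j)$ are continuous, open and surjective by Tietze, so Corollary \ref{catlem2}(ii) shows that for the generic $f\in C(K)$ every restriction $f|_{K_j}$ is simultaneously generic in $C(K_j)$. Hence Theorem \ref{ft}(ii), Theorem \ref{tip} and Lemma \ref{cpt} all hold for $f|_{K_j}$ on each copy, and since $(f|_{K_j})^{-1}(y)=f^{-1}(y)\cap K_j$ carries the same Hausdorff dimension computed in $K_j$ or in $K$, any dimension bound obtained inside a copy transfers to a lower bound for $\dim_{H} f^{-1}(y)$.

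The engine for hitting \emph{every} interior value is then the observation that the condition ``$y_0\in\inter (f|_{K_j})(K_j)$'' is an \emph{open} condition on $y_0$ and is self-propagating under the self-similar subdivision. Fixing $y_0\in\inter f(K)$, I would use Lemma \ref{cpt} inside a copy to conclude that whenever $y_0$ lies in the interior of the $f$-image of a copy, it already lies in the interior of the $f$-image of suitable subcopies; iterating this selection produces a tree of copies, all carrying $y_0$ as an interior value, whose limit set is contained in $f^{-1}(y_0)$. The branching of this tree reflects exactly the quantity that the boundary characterisation of $\dim_{tH}$ (Theorem \ref{min}) assigns to $K$, and I would estimate the Hausdorff dimension of the resulting limit set from below by $d-1$. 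Crucially, every step of the selection uses only the open interior-image condition, so the construction succeeds for \emph{every} $y_0\in\inter f(K)$ and not merely for a co-meager set; combined with the upper bound and intersected over $d_k\uparrow\dim_{tH}K$, this yields the theorem.

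The hard part will be precisely the last estimate: converting the homogeneity $\dim_{tH}K_j=\dim_{tH}K$ of the copies into a genuine branching/mass lower bound on the tree, uniformly in $y_0$, so that the limit set inside $f^{-1}(y_0)$ is forced to have Hausdorff dimension at least $d-1$. This is the only place where weak self-similarity is indispensable rather than mere homogeneity, for which Theorem \ref{tip} is already sharp, and it is where the soft transfer of genericity must be replaced by a quantitative geometric argument.
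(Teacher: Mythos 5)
Your setup is sound: the upper bound from Theorem \ref{ft}(i), the reduction to a fixed $d<\dim_{tH}K$, the use of Lemma \ref{cpt} to replace $\inter f(K)$ by interiors of images of components, and the transfer of genericity to countably many bi-Lipschitz copies via Corollary \ref{catlem2}(ii) all match the paper. But the core of the argument --- forcing the \emph{given} value $y_0$ to be a good value for some restricted function --- is where your proposal has a genuine gap. Your tree-of-copies construction is not carried out, and the step you defer (``converting the homogeneity of the copies into a genuine branching/mass lower bound on the tree'') is not a technicality: it amounts to re-proving the lower bound of Theorem \ref{ft}(ii) from scratch. Nothing in your selection scheme rules out that at each level only \emph{one} subcopy has $y_0$ in the interior of its image, in which case the limit set is a single point. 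Theorem \ref{min} gives a basis with \emph{small} boundaries (an upper-bound structure); it provides no lower bound on branching. Also, Lemma \ref{cpt} propagates $y_0$ into interiors of images of \emph{connected components}, not of bi-Lipschitz copies, so even the self-propagation step needs an extra argument. Finally, Theorem \ref{ft}(ii) applied to $f|_{K_j}$ only produces an interval $I_{f|_{K_j},d}$ depending on the function, with no control on where it sits, so ``genericity on each copy'' alone cannot reach a prescribed $y_0$.

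The paper's proof avoids any geometric tree construction by a purely Baire-category device that is absent from your proposal. By Theorem \ref{ft}(ii) and Baire's theorem one extracts rationals $m_2<m_1<M_1<M_2$ and a \emph{second category} set $\mathcal{H}_n\subseteq C(K)$ of functions $f$ with $f(K)\subseteq[m_2,M_2]$ and $\dim_H f^{-1}(y)\ge d_n-1$ for \emph{all} $y$ in the \emph{fixed} interval $[m_1,M_1]$. Given $f_0$ and $\varepsilon$, one covers $K$ by finitely many balls on which $f_0$ oscillates by less than $\omega$, places one disjoint copy $K_i$ in each, and conjugates: $\widehat f_i=\psi_i\circ f\circ\phi_i$ with $\psi_i$ \emph{affine}, chosen so that $\psi_i([m_1,M_1])=[f_0(x_i)-\omega,f_0(x_i)+\omega]$. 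This rescaling is exactly what guarantees that every $y_0\in f_0(K)$ lies in the image of the good interval of some copy, which is the step your approach lacks. The transplanted sets are of second category in $C(K_i)$, their simultaneous pullback $\mathcal{F}$ is of second category in $B(f_0,\varepsilon)$ (using disjointness of the $K_i$ and the Baire property of $\mathcal{H}_n$), and $\mathcal{F}\subseteq\mathcal{G}_n$, where $\mathcal{G}_n$ is the target set with a $1/n$-margin away from $\partial f(C)$. The last ingredient, which you would also need in some form, is Lemma \ref{baire}: $\mathcal{G}_n$ is co-analytic, hence has the Baire property, so being of second category in every ball upgrades to co-meager. Without the fixed-interval extraction, the affine rescaling, and the Baire-property argument, your outline does not close.
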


\begin{proof} If $\dim_{t} K=0$ then the generic $f\in
C(K)$ is one-to-one, and $f(K)$ is nowhere dense. Thus $\inter
f(K)=\emptyset$, and the statement is obvious.

Next we  assume $\dim_{t}K>0$. Theorem \ref{ft} implies that for the
generic $f\in C(K)$ for all $y\in \mathbb{R}$ we have $\dim_{H}
f^{-1}(y)\leq \dim_{tH} K-1$, thus we only need to verify the
opposite inequality.

Fact \ref{equiv} implies $\dim_{tH}K>0$.
It follows from the weak self-similarity of
$K$  that for all $x\in K$ and $r>0$ we have
$\dim_{tH}B(x,r)=\dim_{tH} K>0$. Then applying Fact \ref{equiv}
again we obtain that $\dim_{t}B(x,r)>0$. If $\mathcal{C}$ denotes the set of
connected components of $K$ then Lemma \ref{cpt} yields that for the
generic $f\in C(K)$ we have $\bigcup_{C\in \mathcal{C}} \inter f(C)=\inter
f(K)$.

Thus it is enough to prove that for the generic $f\in C(K)$ for
every $y\in \bigcup_{C\in \mathcal{C}} \inter f(C)$ we have $\dim_{H}
f^{-1}(y)\geq \dim_{tH}K-1$.

Let us choose a sequence $0<d_n\nearrow \dim_{tH}K$ and let us fix
$n\in \mathbb{N}^{+}$. Theorem \ref{ft} implies that for the generic $f\in
C(K)$ there exists an interval $I_{f,d_n}$ such
that for all $y\in I_{f,d_n}$ we have $\dim_{H} f^{-1}(y)\geq d_n-1$.
By Baire's Category Theorem there are $m_2<m_{1}<M_{1}<M_2$ such
that
$$ \mathcal{H}_n=  \{ f\in C(K): f(K)\subseteq [m_2,M_2],~ \forall
y\in [m_{1},M_{1}],\  \dim_{H} f^{-1}(y)\geq d_n-1 \}$$
is of second category. Note that $d_n>0$ implies that for every
$f\in \mathcal{H}_n$ we have $[m_1,M_1]\subseteq f(K)$. Let us also define
 the following set.
$$ \mathcal{G}_{n} = \left\{f\in C(K): \forall y\in
\bigcup_{C\in \mathcal{C}} \left(f(C)\setminus B(\partial f(C),1/n)\right),
~ \dim_{H} f^{-1}(y)\geq d_n-1\right\}.
$$
It is sufficient to verify that $\mathcal{G}_n$ is co-meager, since by
taking the intersection of the sets $\mathcal{G}_n$ for all $n\in \mathbb{N}^{+}$
we obtain the desired co-meager set in $C(K)$. In order to prove
this we show that $\mathcal{G}_n$ contains `certain copies' of $\mathcal{H}_n$.
First we need the following lemma.

\begin{lemma} \label{baire} $\mathcal{H}_{n}$ and $\mathcal{G}_n$ have the Baire property. \end{lemma}

\begin{proof}[Proof of Lemma \ref{baire}] Lemma \ref{borel}
implies that $\Gamma_n= \{(f,y)\in C(K)\times \mathbb{R}: \dim_{H}
f^{-1}(y)< d_n-1\}$ is Borel. Then $\mathcal{H}_{n}=\{f\in C(K):
f(K)\subseteq [m_{2},M_{2}]\}\cap \{f\in C(K): \forall y\in
[m_{1},M_{1}],\, \dim_{H} f^{-1}(y)\geq d_n-1\}$. The first term of
the intersection is clearly closed. It is sufficient to prove that
the second one has the Baire property. It equals
$\left(\pr_{C(K)}\Big (\left(C(K)\times [m_{1},M_{1}]\right)\cap
\Gamma_n \Big )\right)^{c}$, which is the complement of the
projection of a Borel set. Hence it is co-analytic, and therefore
has the Baire property.

The set
$$\Delta_n= \left\{ (f,y)\in C(K)\times \mathbb{R}: y\in
\bigcup_{C\in \mathcal{C}} \Big (f(C)\setminus B(\partial f(C),1/n)\Big )
\right\}$$
is clearly open. Then $\mathcal{G}_n=\left(\pr_{C(K)}\left(\Gamma_n \cap
\Delta_n\right)\right)^{c}$, which is the complement of the
projection of a Borel set. Thus it is co-analytic, and therefore has
the Baire property.
\end{proof}

Now we return to the proof of Theorem \ref{thinty}. Consider
$\mathcal{G}_n$ (note that we already fixed $n$), our aim is to
show that $\mathcal{G}_n$ is co-meager. Since $\mathcal{G}_n$ has the Baire
property, it is enough to prove that $\mathcal{G}_n$ is of second category
in every non-empty open subset of $C(K)$. Let $f_0\in C(K)$ and
$0<\varepsilon<1/n$ be fixed. We want to show that $\mathcal{G}_n \cap
B(f_{0},\varepsilon)$ is of second category.

The continuity of $f_{0}$ and the compactness of $K$ imply that
there are finitely many distinct $x_{1},...,x_{k}\in K$ and positive
$r_{1},...,r_{k}$ such that
\begin{equation}\label{*cover*}
K=\bigcup_{i=1}^{k}B(x_{i},r_{i})
\end{equation}
and for each $i\in\{1,\dots,k\}$ the oscillation of $f_{0}$ on
$B(x_{i},r_{i})$ is less than \begin{equation}\label{*a4*}
\omega=\frac{\varepsilon
(M_1-m_1)}{2(M_2-m_2)}<\frac{\varepsilon}{2}.
\end{equation}

Choose positive $r_{1}',...,r_{k}'$ such that the balls
$B(x_{i},r_{i}')\subseteq B(x_{i},r_{i})$ are disjoint. Using the
weak self-similarity property we can choose for every
$i\in\{1,\dots,k\}$ a set $K_{i} \subseteq B(x_{i},r_{i}')$ and a
bi-Lipschitz map $\phi_{i}\colon K_i \to K$. Let us fix $i\in
\{1,\dots,k\}$. We define the affine function $\psi_{i}\colon
\mathbb{R}\to\mathbb{R}$ such that
\begin{equation} \label{psii} \psi_{i}
\left([m_1,M_1]\right)=[f_0(x_i)-\omega,f_{0}(x_i)+\omega].
 \end{equation}
Suppose $f\in \mathcal{H}_{n}$ and consider $\widehat{f}_{i}\in C(K_{i})$
defined by
$${\widehat{f}_{i}}= \psi_{i}\circ f\circ \phi_{i}.$$
The form of $\psi_i$, \eqref{psii} and \eqref{*a4*} imply
\begin{align*}
\diam {\widehat{f}_{i}}(K_{i})&=\diam \psi_i(f(K)) \leq
\diam \psi_{i}\left([m_2,M_2]\right) \\
&= \frac{M_2-m_2}{M_1-m_1} \diam \psi_{i}\left([m_1,M_1]
\right) \notag \\
&=\frac{M_2-m_2}{M_1-m_1}2\omega=\varepsilon.
\end{align*}
Then $f_0(K_i)\subseteq
[f_0(x_i)-\omega, f_0(x_i)+\omega] \subseteq \widehat{f_i}(K_i)$
and the above
inequality yield for all $x\in K_{i}$
\begin{equation}\label{*a7**}
\left|f_{0}(x)-{\widehat{f}_{i}}(x)\right|\leq \varepsilon.
\end{equation}
Set
$${\widehat{{\mathcal F}}_{i}}=\{ \psi_{i}\circ f\circ \phi_{i}:f\in\mathcal{H}_{n} \}.$$
It follows from \eqref{*a7**} that ${\widehat{{\mathcal
F}}_{i}}\subseteq B\left(f_{0}|_{K_i},\varepsilon \right)$. The maps
$\phi_i\colon K_i \to K$ and $\psi_i \colon \mathbb{R}\to \mathbb{R}$ are
homeomorphisms, hence the map $G_{i}\colon C(K)\rightarrow
C(K_{i})$, $G_{i}(f)=\psi_{i}\circ f\circ \phi_{i}$ is also a
homeomorphism. Since $\mathcal{H}_{n}$ is of second category in $C(K)$, we
obtain that ${\widehat{{\mathcal F}}_{i}}=G_{i}(\mathcal{H}_n)$ is of second
category in $C(K_{i})$.  Set
$$\mathcal{F}_{i}= \left\{ f\in B(f_{0},\varepsilon): f|_{K_{i}}\in
{\widehat{{\mathcal F}}_{i}} \right\}.$$
The map $\widehat{R}_i\colon B(f_0,\varepsilon)\to
B\left(f_0|_{K_i},\varepsilon \right)$, $\widehat{R}_i(f)=f|_{K_i}$
is clearly continuous, and  by Tietze's Extension Theorem it is also
surjective and open. Thus Lemma \ref{catlem} $(ii)$ implies that
$\mathcal{F}_{i}=\widehat{R}_{i}^{-1}\left({\widehat{{\mathcal F}}_{i}}\right)$
is of second category in $B(f_0,\varepsilon)$.
Set
$$\mathcal{F}=\bigcap_{i=1}^{k}\mathcal{F}_{i}.$$
Clearly $\mathcal{F}\subseteq B(f_{0},\varepsilon)$.

\begin{lemma} \label{2ndcat} $\mathcal{F}$ is of second category in $B(f_{0},\varepsilon)$. \end{lemma}

\begin{proof}[Proof of Lemma \ref{2ndcat}] Let $$R\colon B(f_{0},\varepsilon)
\to B\left(f_{0}|_{\bigcup_{i=1}^{k}K_{i}},\varepsilon\right), ~
R(f)=f|_{\bigcup_{i=1}^{k}K_{i}}$$
and for all $i\in \{1,\dots,k\}$
$$R_{i}\colon B\left(f_{0}|_{\bigcup_{i=1}^{k}K_{i}},\varepsilon\right) \to
B\left(f_{0}|_{K_{i}},\varepsilon \right), ~ R_{i}(f)=f|_{K_{i}}.$$
Clearly the map $R$ is continuous, open and surjective. Since
$\mathcal{F}=R^{-1}\left(\bigcap_{i=1}^{k} R_{i}^{-1}\left({\widehat{{\mathcal
F}}_{i}}\right)\right)$, it follows from Lemma \ref{catlem} $(ii)$
that it is enough to prove that $\bigcap_{i=1}^{k}
R_{i}^{-1}\left({\widehat{{\mathcal F}}_{i}}\right)$ is of second
category in
$B\left(f_{0}|_{\bigcup_{i=1}^{k}K_{i}},\varepsilon\right)$. Lemma
\ref{baire} implies that $\mathcal{H}_{n}$ and hence ${\widehat{{\mathcal
F}}_{i}}$ has the Baire property for every $i\in \{1,\dots,k\}$.
Thus there is a non-empty open set $\mathcal{U}_{i}\subseteq C(K_{i})$ such
that ${\widehat{{\mathcal F}}_{i}}$ is co-meager in $\mathcal{U}_{i}$. The sets
$K_{i}$, $i\in\{1,\dots ,k\}$ are disjoint. Hence $\bigcap_{i=1}^{k}
R_{i}^{-1}(\mathcal{U}_{i})\subseteq
B\left(f_{0}|_{\bigcup_{i=1}^{k}K_{i}},\varepsilon\right)$ is a
non-empty open set, and $\bigcap_{i=1}^{k}
R_{i}^{-1}\left({\widehat{{\mathcal F}}_{i}}\right)$ is co-meager in
$\bigcap_{i=1}^{k} R_{i}^{-1}\left(\mathcal{U}_{i}\right)$. Therefore, it is of
second category in
$B\left(f_{0}|_{\bigcup_{i=1}^{k}K_{i}},\varepsilon\right)$.
\end{proof}

Now we return to the proof of Theorem \ref{thinty}. We prove that
$\mathcal{F}\subseteq \mathcal{G}_n$ and then Lemma \ref{2ndcat} will imply that
$\mathcal{G}_n$ is of second category in $B(f_0,\varepsilon)$. Assume that
$g\in \mathcal{F}$. Let $y_0\in \bigcup_{C\in \mathcal{C}} \left(g(C)\setminus
B(\partial g(C),1/n)\right)$ be arbitrary. Then there is a $C_0\in
\mathcal{C}$ such that $B(y_0,1/n)\subseteq  \inter g(C_0)$. The
connectedness of $C_0$ and $g\in B(f_0,\varepsilon)$ yield $y_0\in
f_0(C_0)\subseteq f_{0}(K)$. Hence the definition of $\omega$ and
\eqref{*cover*} imply that there is an $i\in \{1,\dots,k\}$ such
that $y_0 \in [f_{0}(x_{i})-\omega,f_{0}(x_{i})+\omega]$. The
definition of $\mathcal{F}$ yields that there exists an $f\in \mathcal{H}_n$ such
that $g|_{K_{i}}=\psi_{i}\circ f \circ \phi_{i}={\widehat{f}_{i}}$.
Then \eqref{psii} implies $\psi_{i}^{-1}(y_0)\in [m_{1},M_{1}]$, and
$f\in \mathcal{H}_n$ implies
$\dim_{H}f^{-1}\left(\psi_{i}^{-1}(y_0)\right)\geq d_n-1$. By  the
bi-Lipschitz property of $\phi _{i}$ we infer
\begin{align*}
\dim_{H} g^{-1}(y_0)&\geq \dim_{H}{\widehat{f}_{i}}^{-1}(y_0)
=\dim_{H} \phi _{i}^{-1} \left(f^{-1} \left(\psi ^{-1} _{i}
(y_0)\right)\right) \\
&=\dim_{H} f^{-1} \left(\psi^{-1} _{i} (y_0)\right)\geq d_n-1.
\end{align*}
Therefore $g\in \mathcal{G}_n$, and hence $\mathcal{F}\subseteq \mathcal{G}_n$. This completes the proof of Theorem \ref{thinty}.
\end{proof}

It is natural to ask what we can say about the level sets of
\emph{every} $f\in C(K)$. Clearly we cannot hope that for every
$y\in \inter f(K)$ the level set $f^{-1}(y)$ is of small Hausdorff
dimension, since $f$ can be constant on a large set. The opposite
direction is less trivial, it is easy to prove that for every $f\in
C\left([0,1]^2\right)$ for every $y\in \inter f\left([0,1]^2\right)$
we have $\dim_{H}f^{-1}(y)\geq 1=\dim_{tH} [0,1]^{2}-1$. This is not
true in general even for connected self-similar metric spaces. We
have the following counterexample.

\begin{example} \label{example} Let $K=C\times [0,1]$, where $C$ is the von Koch curve. Clearly, $K$ is a
connected self-similar metric space. Let $h\colon C \to [0,1]$ be a homeomorphism and let $f\colon K\to [0,1]$,
$f(x,y)=h(x)$. Theorem \ref{prod} implies that $\dim_{tH} K=\dim_{H} C+1=\frac{\log 4}{\log 3}+1$. For all $y\in [0,1]$ we have $\dim_{H}
f^{-1}(y)=1<\frac{\log 4}{\log 3}=\dim_{tH} K-1$.
\end{example}

\section{Level sets of maximal dimension}

Let $K$ be a compact metric space. If $\dim_{t}K=0$ then the generic
$f\in C(K)$ is one-to-one by Lemma \ref{one-to-one}, thus every non-empty level set is a single
point.

Assume $\dim_t K>0$. Corollary \ref{sup} states that for the
generic $f\in C(K)$ we have $\sup_{y\in \mathbb{R}}
\dim_{H}f^{-1}(y)=\dim_{tH}K-1$. First we prove that in this
statement the supremum is attained.

\begin{theorem}  \label{maxthm} Let $K$ be a compact metric space with $\dim_{t}K>0$.
Then for the generic $f\in C(K)$
$$\max_{y\in \mathbb{R}}\dim_{H}f^{-1}(y)=\dim_{tH} K-1.$$
 \end{theorem}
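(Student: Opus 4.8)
The plan is to separate the upper and lower bounds and then produce, for the generic $f$, a single value whose level set realises the maximal dimension. By Theorem \ref{ft}(i) the generic $f\in C(K)$ satisfies $\dim_H f^{-1}(y)\le \dim_{tH}K-1$ for every $y\in\mathbb{R}$, so in view of Corollary \ref{sup} it suffices to show that for the generic $f$ there exists some $y_0\in\mathbb{R}$ with $\dim_H f^{-1}(y_0)\ge \dim_{tH}K-1$.

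First I would fix an anchor point. Since $\supp K\neq\emptyset$ by Remark \ref{rsupp}, choose $x_0\in\supp K$, a strictly decreasing sequence $r_n\searrow 0$, and set $K_n=B(x_0,r_n)$; by the definition of $\supp K$ we have $\dim_{tH}K_n=\dim_{tH}K$ for every $n$, hence $\dim_t K_n>0$ by Fact \ref{equiv}. Fix also $d_n\nearrow \dim_{tH}K$. The balls $K_n$ are nested and shrink to $x_0$, so $\diam f(K_n)\to 0$ and the only value common to all $f(K_n)$ for large $n$ is $f(x_0)$; this is precisely what forces the correct choice $y_0=f(x_0)$. The goal thus becomes to show that for the generic $f$ one has $\dim_H\big(f^{-1}(f(x_0))\cap K_n\big)\ge d_n-1$ for infinitely many $n$, because then $\dim_H f^{-1}(f(x_0))\ge \sup_n(d_n-1)=\dim_{tH}K-1$, and together with the upper bound this yields the theorem.

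To obtain this genericity statement I would run a Baire category argument on the sets $\mathcal{B}_N=\{f\in C(K):\exists\, n\ge N,\ \dim_H(f^{-1}(f(x_0))\cap K_n)\ge d_n-1\}$ and pass to $\bigcap_N\mathcal{B}_N$. That each $\mathcal{B}_N$ has the Baire property should follow as in Lemma \ref{borel} and Lemma \ref{baire}: the map $(f,y)\mapsto \dim_H(f^{-1}(y)\cap K_n)$ is Borel by Theorem \ref{tmm}, the evaluation $f\mapsto f(x_0)$ is continuous, so $\mathcal{B}_N$ is a countable union of Borel sets. Since a set with the Baire property that is of second category in every non-empty open set is co-meager, it remains to prove that $\mathcal{B}_N$ is of second category in every ball $B(f_0,\varepsilon)$. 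For this I would take $n\ge N$ so large that the oscillation of $f_0$ on $K_n$ is tiny, apply Theorem \ref{ft} together with Baire's theorem on $K_n$ (exactly as the set $\mathcal{H}_n$ was produced in the proof of Theorem \ref{thinty}) to obtain a fixed interval $[m_1,M_1]\subseteq g(K_n)$ of good values for a second-category family of $g\in C(K_n)$, compose with an affine map $\psi_n$ carrying $[m_1,M_1]$ onto a short interval centred at $f_0(x_0)$, and glue the resulting functions to $f_0$ by Tietze's Extension Theorem, transporting second category back to $B(f_0,\varepsilon)$ via Corollary \ref{catlem2} and Lemma \ref{catlem}(ii).

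The main obstacle is the coordination between the anchor value and the good interval. Since $y\mapsto\dim_H f^{-1}(y)$ is \emph{not} upper semicontinuous, the value $f(x_0)$ attained at the fixed centre need not lie in the good interval $\psi_n([m_1,M_1])$ supplied by Theorem \ref{ft}(ii), and a naive passage to a limit of values $y_n$ with $\dim_H f^{-1}(y_n)\ge d_n-1$ can lose all of the dimension in the limit; this is exactly why Corollary \ref{sup} does not already give an attained maximum. The heart of the proof is therefore to force the value at $x_0$ into the good interval while keeping a second-category family: one arranges the construction on $K_n$ so that $f(K_n)\subseteq[f_0(x_0)-\omega,f_0(x_0)+\omega]$ and intersects the second-category family with the open condition $f(x_0)\in \psi_n((m_1,M_1))$. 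The only delicate point is to check that this open condition actually meets the family, which reduces to producing a single admissible $g$ with $g(x_0)\in(m_1,M_1)$; I expect to handle this by a local modification of $g$ near $x_0$ supported on a ball of negligible diameter, which changes each relevant level set only inside that ball and hence does not lower its Hausdorff dimension. Granting this, $\bigcap_N\mathcal{B}_N$ is co-meager and every $f$ in it has the level set $f^{-1}(f(x_0))$ of dimension exactly $\dim_{tH}K-1$.
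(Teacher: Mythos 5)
Your overall frame (anchor at $x_0\in\supp K$, target the level $f(x_0)$, work in shrinking neighbourhoods of $x_0$ with $d_n\nearrow\dim_{tH}K$) is the same as the paper's, and you correctly isolate the crux: the value $f(x_0)$ must be forced into the interval of good values supplied by Theorem \ref{ft}(ii). But your proposed resolution of that crux has a genuine gap. You claim that intersecting the second-category family with the open condition $f(x_0)\in\psi_n((m_1,M_1))$ ``reduces to producing a single admissible $g$ with $g(x_0)\in(m_1,M_1)$.'' This is not a valid category argument: a second-category set $\mathcal{H}$ with the Baire property is co-meager in \emph{some} non-empty open set $\mathcal{U}$, and $\mathcal{H}\cap\mathcal{V}$ is of second category only when the open set $\mathcal{V}$ meets $\mathcal{U}$; a single common point does not suffice (the intersection could be a singleton, hence nowhere dense). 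Since $\mathcal{H}$ is produced by an abstract Baire argument, you have no control over where it is co-meager, and a priori every $g$ in its co-meager part could satisfy $g(x_0)\notin(m_1,M_1)$. Even the single witness is problematic: a modification of $g$ supported on $U(x_0,\rho)$ replaces $g^{-1}(y)$ by a set containing $g^{-1}(y)\setminus U(x_0,\rho)$, and while for each fixed $y$ one has $\dim_H\bigl(g^{-1}(y)\setminus U(x_0,\rho)\bigr)\to\dim_H g^{-1}(y)$ as $\rho\to 0$, no single $\rho$ need work uniformly for the uncountably many $y\in[m_1,M_1]$, so admissibility of the modified $g$ is not guaranteed.

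The paper closes exactly this gap with a decoupling device that your proposal lacks. In Lemma \ref{lmax2} the auxiliary compacta are placed in annuli $C_n=B(x_0,1/n)\setminus U(x_0,r_n)$ that \emph{exclude} $x_0$ (your $K_n=B(x_0,r_n)$ contain it), so that $R(f)=(f|_{K_2},f(x_0))$ is a continuous open surjection onto the product $C(K_2)\times\mathbb{R}$. The Kuratowski--Ulam theorem then reduces the problem to showing that for the generic $g\in C(K_2)$ and the generic $y\in\mathbb{R}$ either $\dim_H g^{-1}(y)\ge d-1$ or $y\notin g(K_1)$; genericity in $y$ is obtained by planting a countable dense family of small balls $B_{i,j}$ inside $K_1$, each forcing a good subinterval inside any value interval contained in $g(K_1)$. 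Pulling back along $R$ makes $f(x_0)$ behave like a generic real independent of $f|_{K_2}$, which is precisely the coordination you could not achieve; the escape clause $f(x_0)\notin f(K_n)$ is then defeated infinitely often by Lemma \ref{lmax1}, using non-trivial connected components of the $K_n$ accumulating at $x_0$. Without this (or an equivalent) decoupling, the second-category claim for your sets $\mathcal{B}_N$ does not go through.
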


\begin{proof}

By Theorem \ref{ft} it is sufficient to prove  that for the generic
$f\in C(K)$ there exists a level set of Hausdorff dimension at least
$\dim_{tH}K-1$. Let us fix $x_0\in \supp K$. We will show that for
the generic $f\in C(K)$ we have $\dim_{H} f^{-1}(f(x_0))\geq
\dim_{tH}K-1$. The following lemma is the heart of the proof.

\begin{lemma} \label{lmax2} Let $K_1\subseteq K$ be compact metric spaces with $x_0\in K\setminus K_1$.
Let $d\in \mathbb{R}$ be such that $\dim_{tH}B(x,r)>d$ for all $x\in K_1$
and $r>0$. Then for the generic $f\in C(K)$ either
$\dim_{H}f^{-1}(f(x_0))\geq d-1$ or $f(x_0)\notin f(K_1)$.
\end{lemma}

\begin{proof}[Proof of Lemma \ref{lmax2}] If $d\leq 0$ then the
statement is vacuous, so we may assume $d>0$. We must prove that the
set
$$\mathcal{F}=\left\{f\in C(K): \dim_{H}f^{-1}(f(x_0))\geq d-1 \textrm{ or } f(x_0)\notin f(K_1) \right\}$$
is co-meager in $C(K)$. Let $K_2=B(K_1,\varepsilon_0)$ with such a
small $\varepsilon_0>0$ that $x_0\notin K_2$. Consider
$$\Gamma=\left\{(f,y)\in C(K_2)\times \mathbb{R}: \dim_{H}f^{-1}(y)\geq d-1
\textrm{ or } y\notin f(K_1)\right\}.$$
First assume that $\Gamma$ is co-meager in $C(K_2)\times \mathbb{R}$. Then
we prove that $\mathcal{F}\subseteq C(K)$ is also co-meager. Let $R\colon
C(K)\to C(K_2)\times \mathbb{R}$, $R(f)=(f|_{K_2},f(x_0))$. Clearly $R$ is
continuous, and Tietze's Extension Theorem implies that $R$ is
surjective and open. Thus Lemma \ref{catlem} implies that
$\mathcal{F}=R^{-1}(\Gamma)$ is co-meager.

Finally, we prove that $\Gamma$ is co-meager in $C(K_2)\times \mathbb{R}$.
Lemma \ref{borel} easily implies that $\Gamma$ is Borel, thus has
the Baire property. Hence it is enough to prove by the
Kuratowski-Ulam Theorem \cite[8.41. Thm.]{Ke} that for the generic
$f\in C(K_2)$ for the generic $y\in \mathbb{R}$ we have $(f,y)\in \Gamma$.
Let $\{z_n\}_{n\in \mathbb{N}^{+}}$ be a dense set in $K_1$ and for $i,j\in
\mathbb{N}^{+}$ let us define $B_{i,j}=B(z_i,1/j)$ if $1/j\leq
\varepsilon_{0}$, and $B_{i,j}=K_2$ otherwise. Then for all $i,j\in
\mathbb{N}^{+}$ we have $B_{i,j}\subseteq K_2$ and the conditions of the
lemma yield $\dim_{tH}B_{i,j}>d$. Let $R_{i,j}\colon C(K_2)\to
C(B_{i,j})$, $R_{i,j}(f)=f|_{B_{i,j}}$ and let
$$\mathcal{G}_{i,j}=\left\{f\in C(B_{i,j}): \exists I \textrm{ interval s.t. } \forall
y\in I ~ \dim_{H}f^{-1}(y)\geq d-1\right\}.$$
Set
$$\mathcal{G}=\bigcap_{i,j\in \mathbb{N}^{+}} R_{i,j}^{-1} (\mathcal{G}_{i,j}).$$
It follows from Theorem \ref{ft} that $\mathcal{G}_{i,j}$ is co-meager in
$C(B_{i,j})$ for every $i,j\in \mathbb{N}^{+}$. Corollary \ref{catlem2}
implies that $R_{i,j}^{-1} (\mathcal{G}_{i,j})$ is co-meager in $C(K_2)$, and
as a countable intersection of co-meager sets $\mathcal{G}$ is also
co-meager in $C(K_2)$. We fix $f\in \mathcal{G}$. It is sufficient to verify
that $\Gamma_{f}=\{y\in \mathbb{R}: (f,y)\in \Gamma\}$ is co-meager. Let
$U\subseteq \mathbb{R}$ be an arbitrary open interval. It is enough to
prove that $\Gamma_{f}\cap U$ contains an interval. If there exists
$y_0\in U$ such that $y_0 \notin f(K_1)$ then there is a $\delta>0$
such that $B(y_0,\delta)\cap f(K_1)=\emptyset$, so
$B(y_0,\delta)\cap U$ is an interval in $\Gamma_{f}\cap U$. Thus we
may assume $U\subseteq f(K_1)$. Then there exist $i_0,j_0\in
\mathbb{N}^{+}$ such that $B_0=B_{i_0,j_0}$ satisfies $f(B_0)\subseteq U$.
The definition of $\mathcal{G}$ implies that there is an interval
$I_{f|_{B_0}}\subseteq U$ such that for all $y\in I_{f|_{B_0}}$ we
have
$$\dim_{H}f^{-1}(y)\geq \dim_{H} (f|_{B_{0}})^{-1}(y)\geq d-1.$$
Hence $I_{f|_{B_0}} \subseteq \Gamma_{f}\cap U$, and this completes the proof.
\end{proof}

Now we return to the proof of Theorem \ref{maxthm}. It follows from Fact \ref{equiv}
that $\dim_{tH}K>0$. Since $\dim_{tH}B(x_0,1/n)=\dim_{tH}K$ for
all $n\in \mathbb{N}^{+}$,  the countable stability of the topological
Hausdorff dimension for closed sets implies the following. For all
$n\in \mathbb{N}^{+}$ there exist $r_n>0$  such that the sets
$C_n=B(x_0,1/n)\setminus U(x_0,r_n)$ satisfy $\dim_{tH} C_n>0$ and
$\dim_{tH} C_n \to \dim_{tH} K$ as $n\to \infty$. For all $n\in
\mathbb{N}^{+}$ we put
$$K_n=\left\{x\in C_n: \forall r>0,~ \dim_{tH}(C_n\cap B(x,r))\geq \dim_{tH}
C_n-1/n \right\}.$$
Clearly, the $K_n$'s are compact. First we prove that for all $n\in
\mathbb{N}^{+}$ we have $\dim_{tH} K_n=\dim_{tH}C_n>0$. The definition of
$K_n$ and the Lindel\"{o}f property of $C_n\setminus K_n$ imply that
there are closed balls $B_i$, $i\in \mathbb{N}$ in $C_n$ such that
$\dim_{tH}B_i\leq \dim_{tH} C_n-1/n$ and $\bigcup_{i\in \mathbb{N}}
B_i=C_n\setminus K_n$. Applying the countable stability of the
topological Hausdorff dimension for the closed sets $\{B_i: i\in
\mathbb{N}\}\cup \{K_n\}$ yields $\dim_{tH} K_n=\dim_{tH}C_n$.

Then Fact \ref{equiv} implies $\dim_{t}K_n>0$, and the $K_n$'s
satisfy the conditions of Lemma \ref{lmax1}. Applying Lemma
\ref{lmax1} for the sequence $\langle K_n \rangle _{n\in \mathbb{N}^{+}}$
and the compact set $K$, and applying Lemma \ref{lmax2} for all
$K_n\subseteq K$ with $d_n=\dim_{tH}C_n-2/n$ simultaneously imply
that for the generic $f\in C(K)$ we have $f(x_0) \in f(K_n)$
for infinitely many $n\in \mathbb{N}^{+}$, and for every $n\in \mathbb{N}^{+}$
either $\dim_{tH}f^{-1}(f(x_0))\geq d_n-1$ or $f(x_0)\notin f(K_n)$.
Hence there is a subsequence $\langle n_i \rangle_{i \in \mathbb{N}}$ (that
depends on $f$) such that $\dim_{tH}f^{-1}(f(x_0))\geq d_{n_i}-1$
for all $i\in \mathbb{N}$, that is
$$\dim_{tH}f^{-1}(f(x_0))\geq \lim_{i\to \infty}
\left(\dim_{tH}C_{n_i}-2/n_{i}-1\right)=\dim_{tH}K-1.$$
This concludes the proof.
\end{proof}

\begin{remark} \label{rr} Note that we proved the following stronger statement.
Let $K$ be a compact metric space with $\dim_{t}K>0$. Set for all
$x\in K$
$$ \mathcal{F}_{x}=\left\{f\in C(K):\dim_{H} f^{-1}(f(x))=\dim_{tH}K-1\right\}.$$
Then $\mathcal{F}_x$ is co-meager in $C(K)$ for every $x\in \supp K$.
\end{remark}

The following example shows that the sets $\mathcal{F}_x$, $x\in \supp K$
depend on $x$ indeed in general.

\begin{example} Let $K$ be a self-similar compact metric space with
$\dim_{t}K>1$. It is well-known and easy to prove that for the generic
$f\in C(K)$ the maximum is attained at a unique point, say $x_f$. By
Theorem \ref{<} for the generic $f\in C(K)$ we have
$\dim_{H}f^{-1}(f(x_f))=0< \dim_{t} K-1\leq \dim_{tH}K-1$, thus
$f\notin \mathcal{F}_{x_f}$. Clearly, $\supp K=K$, therefore $\bigcap_{x\in
\supp K} \mathcal{F}_x=\bigcap_{x\in K} \mathcal{F}_x$ is of first category in $C(K)$.
\end{example}

The following theorem shows that we cannot strengthen Theorem
\ref{maxthm} in general, since there exists a compact metric space $K$ such
that the generic $f\in C(K)$ has a unique level set of maximal Hausdorff
dimension. Moreover, $K$ will be the attractor of an iterated function system,
so it will be `homogeneous' to some extent.

\begin{theorem} \label{ifs} There exists a compact set $K\subseteq \mathbb{R}^{2}$ such
that $K$ is an attractor of an iterated function system and the
generic $f\in C(K)$ has a unique level set of Hausdorff dimension
$\dim_{tH}K-1$.
\end{theorem}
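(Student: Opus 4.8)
The plan is to construct $K$ explicitly as the attractor of an IFS in $\mathbb{R}^2$, designed so that exactly one place in $K$ carries the full topological Hausdorff dimension in a "localized" sense, while everywhere else the dimension is strictly smaller. Concretely, I would build $K$ as a countable union of shrinking scaled copies of some fixed building block $L$ (with $\dim_{tH} L = \dim_{tH} K$) all accumulating at a single distinguished point $p$, together with the point $p$ itself; the copies away from $p$ should be arranged so that any ball not containing $p$ meets only finitely many of them, forcing $\dim_{tH}$ of such a ball to be realized by a single copy, whereas every ball around $p$ sees infinitely many copies and hence attains the full dimension. The iterated function system maps would be similarities shrinking toward $p$ (e.g.\ one contraction fixing $p$ and pulling the whole picture inward, plus maps placing finitely many generating pieces), so that $K$ is genuinely a self-map attractor $K = \bigcup_i \Psi_i(K)$.

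Next I would analyze the level sets of the generic $f \in C(K)$. By Remark \ref{rr}, for every $x \in \supp K$ the set $\mathcal{F}_x$ is co-meager, and by Theorem \ref{ft}(i) generically $\dim_H f^{-1}(y) \le \dim_{tH} K - 1$ for all $y$. The design should guarantee that $\supp K = \{p\}$: every ball around a non-accumulation point lies inside finitely many copies and so has topological Hausdorff dimension strictly below $\dim_{tH} K$, while only neighborhoods of $p$ attain it. Then the key is to show that generically the \emph{only} level set of maximal dimension is $f^{-1}(f(p))$. The maximal-dimension level set must arise from a point where local topological Hausdorff dimension is full; since that happens only at $p$, I would argue that for the generic $f$, any level set of dimension $\dim_{tH} K - 1$ is forced to contain $p$ (or to be $f^{-1}(f(p))$), and that the generic $f$ is injective enough near the other copies to keep each individual copy's contribution strictly below the threshold. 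This combines the localization of $\supp K$ with a category argument: the set of $f$ having a second maximal level set away from $p$ is meager.

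The main obstacle I anticipate is the \emph{uniqueness} of the maximal level set, rather than its existence. Existence follows directly from Theorem \ref{maxthm}. For uniqueness I must rule out, for the generic $f$, the possibility that some level value $y \neq f(p)$ also achieves $\dim_H f^{-1}(y) = \dim_{tH} K - 1$. The delicate point is that a single level set $f^{-1}(y)$ could in principle pick up high dimension by intersecting \emph{many} of the shrinking copies simultaneously at the same value $y$; I must engineer the geometry and exploit genericity so that this does not happen—roughly, that the generic $f$ spreads the images $f(L_i)$ of the distinct copies so their high-dimensional level sets occur at distinct values, with only the accumulation at $p$ producing a coincidence. Making this quantitative will require controlling, via a Baire-category argument over the countably many copies, the overlap of the "good" value-intervals $I_{f,d}$ supplied by Theorem \ref{ft}(ii) for the separate copies, ensuring that generically these intervals are pairwise disjoint except where they are forced to coincide at $f(p)$.

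Finally I would assemble these pieces: verify $K$ is an IFS attractor, compute $\dim_{tH} K$ and check $\supp K = \{p\}$ using countable stability (Theorem \ref{prop}(iv)) and the monotonicity and Lipschitz-invariance properties, then combine Theorem \ref{maxthm} (existence) with the uniqueness category argument to conclude that generically $f^{-1}(f(p))$ is the unique level set of Hausdorff dimension $\dim_{tH} K - 1$.
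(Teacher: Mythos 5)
There is a genuine gap, and it sits exactly at your main design decision: you cannot take the pieces accumulating at $p$ to be scaled copies of a \emph{single} block $L$ with $\dim_{tH}L=\dim_{tH}K$. Topological Hausdorff dimension is bi-Lipschitz invariant (Theorem \ref{prop}), so every copy $L_i$ would satisfy $\dim_{tH}L_i=\dim_{tH}K$, and by Remark \ref{rsupp} each $L_i$ contains a point $x_i$ all of whose balls relative to $L_i$ have full dimension; by monotonicity the balls of $K$ centred at $x_i$ then also have dimension $\dim_{tH}K$, so $x_i\in\supp K$. Hence $\supp K\neq\{p\}$, contrary to what your argument needs. Worse, this destroys uniqueness outright: by Remark \ref{rr} the sets $\mathcal{F}_{x_i}$ are co-meager, and for two fixed distinct points $x_i,x_j$ the set $\{f:f(x_i)\neq f(x_j)\}$ is open and dense, so the generic $f$ would have at least two distinct level sets of maximal Hausdorff dimension. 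Your fallback for uniqueness --- arranging that the ``good'' intervals $I_{f,d}$ coming from the separate copies are pairwise disjoint --- cannot repair this, because a single copy of full dimension already produces, generically, a maximal-dimension level set on its own.

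The paper's construction avoids this by making the pieces have \emph{strictly increasing} dimensions: $K_n=I_n\times C_{\alpha_n}^{r_n}$ with $\alpha_n\searrow 1/3$, so $\dim_{tH}K_n=1+\dim_H C_{\alpha_n}$ increases strictly to $\dim_{tH}K=1+\dim_H C$ and the supremum is attained only ``in the limit'' at the accumulation point $x_\infty$; this is what forces $\supp K=\{x_\infty\}$. Uniqueness is then clean and needs no control of the intervals $I_{f,d}$: a level set avoiding $x_\infty$ is compact, hence contained in a finite union $\widehat K_{n_0}=\bigcup_{i\le n_0}K_i$ with $\dim_{tH}\widehat K_{n_0}-1=\dim_H C_{\alpha_{n_0}}<\dim_H C$, and one concludes by a pigeonhole over $n_0$, Corollary \ref{catlem2} and Theorem \ref{ft} applied to $\widehat K_{n_0}$. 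Note finally that realizing such a $K$ as an IFS attractor is itself delicate precisely because the pieces are pairwise non-similar: the paper attaches a totally disconnected base block $S\times C$ (with $S$ the positive-measure Smith--Volterra--Cantor set) whose elementary pieces can be mapped onto the various $K_n$ by contractions, so the IFS consists of genuinely non-similarity Lipschitz maps rather than the similarities shrinking toward $p$ that you envisage.
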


\begin{proof} Let $S$ and $C$ be the Smith-Volterra-Cantor set and
the middle-thirds Cantor set, respectively. Let
\begin{align}
&\psi _{1}\colon C\to C \cap \left[0, 1/3\right], ~
\psi_{1}(x)=x/3, \notag \\
&\psi _{2}\colon C\to C \cap \left[2/3, 1\right],~
\psi_{2}(x)=x/3+2/3         \label{psidef}
\end{align}
be the natural similarities of $C$.

Let us define $\alpha_n<1$ $(n\in \mathbb{N}^{+})$ such that $\alpha_n
\searrow 1/3$ as $n\to \infty$. Let $C_n=C_{\alpha_n}$, $n\in
\mathbb{N}^{+}$ be the middle-$\alpha_{n}$ Cantor sets. Then clearly
$\dim_{H} C_{n}\nearrow \dim_{H} C$ as $n\to \infty$. It is easy to
verify that the natural homeomorphisms $\phi _{n}\colon C\to C_{n}$,
$n\in \mathbb{N}^{+}$ are Lipschitz maps. For $r>0$ we denote by
$C_{n}^{r}$ the set that is similar to $C_{n}$, furthermore
$C_{n}^{r}\subseteq [0,r]$ and $\diam C_{n}^{r}=r$. We define
positive numbers $r_{n}$, $n\in \mathbb{N}^{+}$ such that the following
conditions hold for every $n\in \mathbb{N}^{+}$.
\begin{enumerate}[(i)]
\item  \label{*i} There are Lipschitz maps with Lipschitz constant at most $
 1/2$ which map the $n$th level elementary pieces of $S$ onto
$[0,r_{n}]$.
\item \label{*ii} There are Lipschitz maps with Lipschitz
constant at most $
  1/2$ which map the $n$th level elementary pieces of $C$ onto
$C_{n}^{r_{n}}$.
\item \label{*iii} $\sum _{i=n}^{\infty}
r_i\leq 1/2^{2n+2}$.
\end{enumerate}
The $n$th level elementary pieces of $S$ are isometric. They are
of positive Lebesgue measure, since $S$ is of positive measure. It
is well-known that every measurable set with positive measure can be
mapped onto $[0,1]$ by a Lipschitz map \cite[Lemma 3.10.]{BBE},
hence  $\eqref{*i}$ can be satisfied if $r_n$ is small enough.
Moreover, $\eqref{*ii}$ follows from the Lipschitz property of
$\phi _{n}$ for small enough $r_{n}$, and $\eqref{*iii}$ is straightforward.

\placedrawing[h]{
\begin{picture}(110,52)
\thinlines
\drawshadebox{6.8}{9.0}{11.3}{13.19}{14.8}{16.39}{}{0.6}
\drawshadebox{6.8}{9.0}{11.3}{6.8}{8.39}{10.0}{}{0.6}
\drawshadebox{6.8}{9.0}{11.3}{26.0}{27.6}{29.2}{}{0.6}
\drawshadebox{6.8}{9.0}{11.3}{32.4}{34.0}{35.59}{}{0.6}
\drawshadebox{13.1}{15.35}{17.6}{13.19}{14.8}{16.39}{}{0.6}
\drawshadebox{13.1}{15.35}{17.6}{6.8}{8.39}{10.0}{}{0.6}
\drawshadebox{13.1}{15.35}{17.6}{32.4}{34.0}{35.59}{}{0.6}
\drawshadebox{13.1}{15.35}{17.6}{26.0}{27.6}{29.2}{}{0.6}
\drawshadebox{24.79}{27.04}{29.29}{13.19}{14.8}{16.39}{}{0.6}
\drawshadebox{24.79}{27.04}{29.29}{6.8}{8.39}{10.0}{}{0.6}
\drawshadebox{24.79}{27.04}{29.29}{32.4}{34.0}{35.59}{}{0.6}
\drawshadebox{24.79}{27.04}{29.29}{26.0}{27.6}{29.2}{}{0.6}
\drawshadebox{31.1}{33.34}{35.59}{13.19}{14.8}{16.39}{}{0.6}
\drawshadebox{31.1}{33.34}{35.59}{6.8}{8.39}{10.0}{}{0.6}
\drawshadebox{31.1}{33.34}{35.59}{32.4}{34.0}{35.59}{}{0.6}
\drawshadebox{31.1}{33.34}{35.59}{26.0}{27.6}{29.2}{}{0.6}
\drawshadebox{56.0}{66.8}{77.59}{6.8}{8.0}{9.19}{}{0.6}
\drawshadebox{56.0}{66.8}{77.59}{11.6}{12.8}{14.0}{}{0.6}
\drawshadebox{56.0}{66.8}{77.59}{21.2}{22.39}{23.6}{}{0.6}
\drawshadebox{56.0}{66.8}{77.59}{26.0}{27.2}{28.39}{}{0.6}
\drawshadebox{77.59}{83.0}{88.4}{6.8}{7.4}{8.0}{}{0.6}
\drawshadebox{77.59}{83.0}{88.4}{9.19}{9.8}{10.39}{}{0.6}
\drawshadebox{77.59}{83.0}{88.4}{14.0}{14.6}{15.19}{}{0.6}
\drawshadebox{77.59}{83.0}{88.4}{16.39}{17.0}{17.6}{}{0.6}
\drawshadebox{88.4}{91.09}{93.8}{6.8}{7.09}{7.4}{}{0.6}
\drawshadebox{88.4}{91.09}{93.8}{8.0}{8.3}{8.6}{}{0.6}
\drawshadebox{88.4}{91.09}{93.8}{10.39}{10.69}{11.0}{}{0.6}
\drawshadebox{88.4}{91.09}{93.8}{11.6}{11.89}{12.19}{}{0.6}
\drawthickdot{99.19}{6.8}
\drawcenteredtext{21.2}{21.2}{$K_0$}
\drawcenteredtext{66.8}{17.6}{$K_1$}
\drawcenteredtext{83.0}{12.05}{$K_2$}
\drawcenteredtext{100}{8.5}{$x_{\infty}$}
\drawvector{6.8}{4.0}{44.0}{0}{1}
\drawvector{4.0}{6.8}{102.0}{1}{0}
\drawdot{95.1}{8}
\drawdot{96.4}{8}
\drawdot{95.1}{9.3}
\end{picture}
}
{Illustration to the construction of $K$}{fig:IFS}

Let $K_{0}=S\times C$, $x_{\infty}=\left(2+\sum _{i=1}^{\infty}
r_{i},0\right)$ and for all $n\in \mathbb{N}^{+}$ let
\begin{align*}  I_{n}&=\textstyle{\left[2+\sum _{i=1}^{n-1} r_{i},2+\sum _{i=1}^{n}
r_{i}\right]}, \\
K_{n}&=I_{n}\times C_{n}^{r_{n}}, \\
K&= \bigcup _{n=0}^{\infty} K_{n}\cup \{x_{\infty}\}, \\
\widetilde{K}_{n}&= \bigcup_{i=n}^{\infty}K_i \cup \{x_{\infty}\}, \\
\widehat{K}_{n}&=\bigcup _{i=0}^{n}K_{i}.
\end{align*}
Clearly, all the sets defined above are compact.

First we prove that $K$ is an attractor of an IFS. Recall that the
$\varphi_i$'s and $\psi_j$'s are the natural homeomorphisms of $S$
and $C$, respectively. For the more precise definition see
\eqref{fidef} and \eqref{psidef} again. Let us define for $i,j\in
\{1,2\}$ the maps $\Psi_{i,j}:K\to K_0$ such that
\begin{equation*} \Psi _{i,j}(x)=
\begin{cases} (0,0) & \textrm{ if } x\in K\setminus K_{0}, \\
\left(\varphi _{i}(x),\psi _{j}(x)\right) & \textrm{ if } x\in K_0.
\end{cases}
\end{equation*}
Clearly, the $\Psi _{i,j}$'s are Lipschitz maps with $\Lip(\Psi
_{i,j})\leq 1/2$, and $\bigcup_{i,j\in \{1,2\}}\Psi_{i,j}(K)=K_0$.
For all $n\in \mathbb{N}^{+}$ and $(i,j)\in
\{(1,2),(2,1),(2,2)\}$ let us define the sets $K_{i,j,n}$ to be the top left,
the top right and the
bottom right $n$th level `elementary pieces' of the bottom left
$(n-1)$st `elementary piece' of $K_0$, that is,
$$K_{i,j,n}= \left(\varphi _{1}^{n-1}\circ \varphi_i \right)(S) \times
\left( \psi _{1}^{n-1}\circ \psi_j \right)(C).$$
These are clearly disjoint subsets of $K_0$.
It follows from $\eqref{*i}$ and
$\eqref{*ii}$ that for all $n\in \mathbb{N}^{+}$ and $(i,j)\in
\{(1,2),(2,1),(2,2)\}$ there exist surjective Lipschitz maps
$$\varphi_{i,n}\colon \left(\varphi _{1}^{n-1} \circ \varphi_i \right)(S)\to
I_{n} ~ \textrm{ and } ~ \psi_{j,n}\colon \left(\psi
_{1}^{n-1} \circ \psi_j \right)(C)\to C_{n}^{r_{n}}$$
with Lipschitz constant at most $1/2$. Let $\Psi\colon K\to
K\setminus K_0$ be the following map.
\begin{equation*} \Psi(x)=
\begin{cases} x_{\infty} & \textrm{ if } x\in K\setminus K_{0} \textrm{ or }
x=(0,0), \\
\left(\varphi_{i,n}(x),\psi_{j,n}(x)\right) & \textrm{ if } x\in
K_{i,j,n}.
\end{cases}
\end{equation*}
The $K_{i,j,n}$'s, $K\setminus K_0$ and $\{(0,0)\}$ are disjoint
sets with union $K$, so $\Psi$ is well-defined. Clearly $\Psi$ maps
$K_{i,j,n}$ onto $K_n$, and hence $\Psi (K)=K\setminus K_0$. Thus $K=\bigcup
_{i,j\in \{1,2\}} \Psi _{i,j}(K)\cup \Psi (K)$. Therefore, it is enough
to prove that $\Psi$ is a Lipschitz map with $\Lip(\Psi)\leq 1/2$,
that is for all $x,z\in K$
\begin{equation} \label{psi} |\Psi (x)-\Psi (z)|\leq \frac{|x-z|}{2}.  \end{equation}
If $x,z\in K\setminus K_{0}$ then $\Psi (x)=\Psi (z)=x_{\infty}$,
thus \eqref{psi} follows.
\\ If $x\in K_{0}$ and $z\in K\setminus K_{0}$, then clearly $|x-z|\geq 1$. On the other
hand,
$\eqref{*iii}$ implies
\begin{align*} |\Psi (x)-\Psi (z)|&\leq \diam (K\setminus K_0)\leq
\sqrt{\left(\sum_{i=1}^{\infty} r_i\right)^2+(r_1)^2} \\
&<2\sum_{i=1}^{\infty}r_i \leq 1/8,
\end{align*}
therefore \eqref{psi} follows.
\\ If $x=(x_1,x_2)\in K_{0}$ and $z=(z_1,z_2)\in K_{0}$ then we may
assume that
\begin{equation} \label{maxx} \max \{|z_1|,|z_2|\}\leq \max\{|x_1|,|x_2|\}.
\end{equation}
If $x=(0,0)$ then $z=(0,0)$ and  we are done. We may assume $x\in K_{i,j,n}$,
where $n\in \mathbb{N}^{+}$ and $(i,j)\in \{(1,2),(2,1),(2,2)\}$. If $z\in
K_{i,j,n}$ then \eqref{psi} follows, since $\Psi$ is Lipschitz on
$K_{i,j,n}$ with Lipschitz constant at most $1/2$. Hence we may
assume $z\in K\setminus K_{i,j,n}$. Then \eqref{maxx} implies
$\Psi(x),\Psi(z)\in \widetilde {K}_{n}$. By the definition of $
\widetilde {K}_{n}$ and $\eqref{*iii}$
$$|\Psi (x)-\Psi (z)|\leq \diam
\widetilde{K}_{n}<2\sum _{i=n}^{\infty} r_{i}\leq 1/2^{2n+1}.$$
The minimum distance between distinct $n$th level elementary
pieces of $S$ and $C$ is $1/2^{2n}$ and $1/3^{n}$, respectively.
Since $K_0=S\times C$,
\begin{align*} |x-z|&\geq \dist \left(K_{i,j,n},K\setminus K_{i,j,n}
\right) \\
&\geq \dist \left(K_{i,j,n},K_{0}\setminus K_{i,j,n} \right) \geq
1/2^{2n}.
\end{align*}
These imply \eqref{psi}, and hence $K$ is an attractor of an IFS.

Finally, we prove that the generic $f\in C(K)$ has a unique level
set of Hausdorff dimension $\dim_{tH} K-1$.

By Theorem \ref{maxthm}  the generic $f\in C(K)$ has at least one
level set of Hausdorff dimension $\dim_{tH} K-1$. Hence it is enough
to show that for the generic $f\in C(K)$ for all $y\neq
f(x_{\infty})$ we have $\dim_{H}f^{-1}(y)<\dim_{tH}K-1$. From Fact
\ref{equiv} follows $\dim_{tH} K_{0}=0$, clearly $\dim_{tH}
\{x_{\infty}\}=0$ and Theorem \ref{prod} implies $\dim_{tH} K_{n}
-1=\dim_{H} C_{n}$. This, together with the countable stability of
the topological Hausdorff dimension for closed sets and the
definition of $C_n$ yield
$$\dim_{tH} K-1=\sup _{n\in \mathbb{N}^{+}} \dim_{tH} K_{n}-1=
\sup_{n\in \mathbb{N}^{+}}\dim_{H} C_{n}=\dim_{H} C.$$
 Assume to the contrary that there exists
$\mathcal{F}\subseteq C(K)$ such that $\mathcal{F}$ is of second category and for
every $f\in \mathcal{F}$ there exists $y_{f}\neq f(x_{\infty})$ such that
$\dim_{H}f^{-1}(y_{f})=\dim_{H} C$. Then $f^{-1}(y_{f})\subseteq
K\setminus \{x_{\infty}\}$, and by the compactness of $f^{-1}(y_f)$
there exists an $n_{f}\in \mathbb{N}^{+}$ such that $f^{-1}(y_{f})\subseteq
\widehat{K}_{n_f}$. Set
$$\mathcal{F}_{n}=\left\{f\in \mathcal{F}: f^{-1}(y_{f})\subseteq
\widehat{K}_{n}\right\}.$$
Since $\mathcal{F}=\bigcup _{n=1}^{\infty} \mathcal{F}_{n}$, there exists $n_{0}\in \mathbb{N}^{+}$ such that $\mathcal{F}_{n_{0}}$
is of second category in $C(K)$. We obtain from Corollary
\ref{catlem2} $(i)$ that
$$\widehat{\mathcal{F}}_{n_0}=\left\{f|_{\widehat{K}_{n_0}}: f\in
\mathcal{F}_{n_{0}}\right\}$$
is of second category in $C\left(\widehat{K}_{n_0}\right)$. The
definition of $\widehat{\mathcal{F}}_{n_0}$ implies that for every $f\in
\widehat{\mathcal{F}}_{n_0}$ we have $\dim_{H} f^{-1}(y_{f})=\dim_{H} C$.
By Theorem \ref{ft}  for the generic $f\in
C\left(\widehat{K}_{n_0}\right)$ every level set is of Hausdorff
dimension at most
$$\dim_{tH}
\widehat{K}_{n_0}-1=\sup_{1\leq n\leq n_0}\dim_{tH} K_n
-1=\dim_{H}C_{n_0}<\dim_{H}C,$$
a contradiction. This concludes the theorem.
\end{proof}

\begin{question} Does there exist an attractor of an \emph{injective} iterated function system $K$
such that the generic $f\in C(K)$ has a unique level set of
Hausdorff dimension $\dim_{tH}K-1$? \end{question}

\section{The dimension of the graph of the generic continuous function}

The graph of the generic $f\in C([0,1])$ is of Hausdorff dimension
one, this is a result of R. D. Mauldin and S. C. Williams \cite[Thm. 2.]{MW}. We
generalize the cited theorem for arbitrary compact metric spaces.
Let $K$ be a compact metric space, then for the generic $f\in C(K)$
the graph of $f$ is of Hausdorff dimension $\dim_{H}K$. We prove an
analogous theorem for the topological Hausdorff dimension, for the
generic $f\in C(K)$ the graph of $f$ is of topological Hausdorff
dimension $\dim_{tH}K$.

\begin{definition} If $f\in C(K)$ let us define
$$\widetilde{f}\colon K\to \graph(f), \quad \widetilde{f}(x)=(x,f(x)).$$
\end{definition}

Clearly $\widetilde{f}$ is continuous and one-to-one, so it is a
homeomorphism between $K$ and $\graph(f)$.

\begin{theorem} \label{graph0} If $K$ is a compact metric space then
for the generic $f\in C(K)$
$$\dim_{H} \graph(f)=\dim_{H}
K.$$  \end{theorem}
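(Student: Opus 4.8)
The plan is to prove the two inequalities $\dim_H \graph(f) \ge \dim_H K$ and $\dim_H \graph(f) \le \dim_H K$ separately, with the lower bound being trivial and the upper bound carrying all the content. For the lower bound, note that the projection $\pr_K \colon \graph(f) \to K$ onto the first coordinate is $1$-Lipschitz and surjective; since Hausdorff dimension cannot increase under a Lipschitz map, we get $\dim_H K = \dim_H \pr_K(\graph(f)) \le \dim_H \graph(f)$ for \emph{every} $f \in C(K)$, not merely the generic one. So the entire difficulty lies in showing that for the generic $f$ the graph does not have strictly larger dimension than $K$.

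For the upper bound I would set up a Baire category argument directly in $C(K)$. Fix $s > \dim_H K$; it suffices to show that for each such $s$ (it is enough to run through a sequence $s_m \searrow \dim_H K$) the set $\mathcal{G}_s = \{ f \in C(K) : \dim_H \graph(f) < s \}$ is co-meager, since then $\bigcap_m \mathcal{G}_{s_m}$ is co-meager and forces $\dim_H \graph(f) \le \dim_H K$. Because $s > \dim_H K$ we have $\mathcal{H}^s(K) = 0$, so for every $\delta > 0$ there is a countable cover $K \subseteq \bigcup_i U_i$ with $\sum_i (\diam U_i)^s$ as small as we like and all diameters below $\delta$. The guiding intuition is that a function which is nearly constant on each small piece $U_i$ has a graph whose covering sum is only mildly worse than that of $K$ itself: if $f$ varies by at most roughly $\diam U_i$ on $U_i$, then the piece of graph over $U_i$ has diameter comparable to $\diam U_i$, so the graph-cover sum stays controlled. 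The natural approximating functions are therefore those that are close to locally constant on a fine partition (equivalently, functions of small oscillation on each cell of a fine cover), and these are dense in $C(K)$.

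Concretely, I would define, for positive integers $m$ and $k$, the open sets $\mathcal{G}_{m,k} = \{ f \in C(K) : \mathcal{H}^{s_m}_{1/k}(\graph(f)) < 1/k \}$ and argue that each $\mathcal{G}_{m,k}$ is open and dense, so that $\mathcal{G} = \bigcap_{m,k} \mathcal{G}_{m,k}$ is co-meager and every $f \in \mathcal{G}$ satisfies $\mathcal{H}^{s_m}(\graph(f)) = 0$, hence $\dim_H \graph(f) \le s_m$ for all $m$. \emph{Openness} follows because a cover witnessing $\mathcal{H}^{s_m}_{1/k}(\graph(f)) < 1/k$ by finitely many sets of diameter $< 1/k$ (one may assume finiteness by compactness of $\graph(f)$, slightly enlarging the sets to open ones) persists under small uniform perturbations of $f$, since a sup-norm change of size $\eta$ moves the graph by at most $\eta$ in the second coordinate and hence keeps it inside a slightly thickened cover. \emph{Density} is the crux: given $f_0$ and $\varepsilon > 0$, I would use uniform continuity to pick a fine open cover $K = \bigcup_{i=1}^N V_i$ on which $f_0$ oscillates by less than $\varepsilon$, and then perturb $f_0$ within $\varepsilon$ to a function $g$ that is genuinely close to constant on each $V_i$ (for instance a finite-valued or piecewise-constant approximant obtained via a partition of unity or Tietze extension), arranging $\diam \widetilde g(V_i) \le C \diam V_i$ for all $i$ while also controlling the total $\sum_i (\diam V_i)^{s_m}$ using $\mathcal{H}^{s_m}(K) = 0$.

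The main obstacle is the \emph{density} step, specifically reconciling two competing demands on the cover: the cells must be fine enough (small $\diam V_i$) that the graph pieces have small diameter and fit under the threshold $1/k$, yet the cover must simultaneously be chosen so that $\sum_i (\diam V_i)^{s_m}$ is small, which requires exploiting $\mathcal{H}^{s_m}(K) = 0$ at a prescribed scale. The clean way to do this is to first fix an efficient cover $\{U_i\}$ of $K$ with $\sum_i (\diam U_i)^{s_m} < \rho$ and all $\diam U_i < \delta$ (possible since $s_m > \dim_H K$), then refine it to an open cover and build the perturbation $g$ so that on each $U_i$ the values of $g$ lie in an interval of length comparable to $\diam U_i$; the graph over $U_i$ then sits inside a set of diameter at most $\sqrt{2}\,\diam U_i$ (in the product metric defined earlier), giving $\sum_i (\diam \widetilde g(U_i))^{s_m} \le 2^{s_m/2} \rho$, which is $< 1/k$ once $\rho$ is small. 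Ensuring that $g$ can be taken within $\varepsilon$ of $f_0$ in sup norm while being this flat on each cell is exactly where the $\varepsilon$-oscillation choice of the cover and Tietze extension combine, and making the two scale choices ($\delta$ versus $\rho$) compatible is the delicate bookkeeping at the heart of the argument.
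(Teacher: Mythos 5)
Your overall architecture matches the paper's: the lower bound via the Lipschitz projection $\graph(f)\to K$ holds for every $f$, and the upper bound is obtained from a dense $G_\delta$ set of the form $\bigcap_{m,k}\{f: \mathcal{H}^{s_m}_{1/k}(\graph(f))<1/k\}$, with openness coming from compactness of the graph. The divergence is in the density step, and this is where your write-up has a genuine (though repairable) quantifier-ordering problem. As written, you first fix the efficient cover $\{U_i\}$ with $\sum_i(\diam U_i)^{s_m}<\rho$ and \emph{then} ask for a perturbation $g$ of $f_0$ whose oscillation on each $U_i$ is at most (a universal constant times) $\diam U_i$ — your $\sqrt{2}\,\diam U_i$ bound presupposes constant $1$. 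This is not achievable in general: already for $K=[0,1]$ and $f_0(x)=10x$, no function with oscillation $\le\diam U_i$ on the cells of a fine cover is uniformly close to $f_0$. The flatness constant must be allowed to depend on $f_0$ and $\varepsilon$ (it is essentially the Lipschitz constant of an $\varepsilon$-approximant of $f_0$), and once it does, your final bound becomes $(1+C^2)^{s_m/2}\rho$ with $C$ determined by the perturbation — so the efficient cover, and hence $\rho$, must be chosen \emph{after} $C$ is fixed, not before. Decoupling the two covers (one coarse cover to build $g$, one fine efficient cover chosen afterwards to estimate the graph) repairs the argument, but you have not done this; you explicitly leave the reconciliation as "delicate bookkeeping."

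The paper avoids the issue entirely with a shortcut you came close to but did not take: Lipschitz functions are dense in $C(K)$ (the paper invokes Stone--Weierstrass on the subalgebra generated by the distance functions $x\mapsto d(x_0,x)$), and if $f$ is Lipschitz then $\widetilde f(x)=(x,f(x))$ is Lipschitz, so $\dim_H\graph(f)\le\dim_H K$ outright and $f$ lies in \emph{every} set $\mathcal{F}_n$. No covering computation for the graph is needed at the density step at all; the only covering argument is hidden inside the standard fact that Lipschitz maps do not raise Hausdorff dimension. Your route can be completed, but it reproves this fact by hand in a way that obscures the one place where care is required. (The paper also proves the statement in the relativized form $\dim_H\graph(f|_C)=\dim_H C$ for a fixed compact $C\subseteq K$, which it needs later for the topological Hausdorff dimension of the graph; your argument specializes to $C=K$, which is all the stated theorem requires.)
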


Theorem \ref{graph0} follows from the following more general theorem
applied with $C=K$. We  need this slight generalization in order to
prove Theorem \ref{graph2}.

\begin{theorem} \label{graph1} If $C \subseteq K$ are compact metric spaces then
for the generic $f\in C(K)$
$$\dim_{H} \graph(f|_C) = \dim_{H} C.$$
\end{theorem}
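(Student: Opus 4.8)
The plan is to prove both inequalities $\dim_H \graph(f|_C) \ge \dim_H C$ and $\dim_H \graph(f|_C) \le \dim_H C$ for the generic $f \in C(K)$. The lower bound is immediate and holds for \emph{every} $f$: since $\widetilde{f}|_C \colon C \to \graph(f|_C)$ is a continuous bijection whose inverse is the restriction of the projection $\pr_K$, and projections are $1$-Lipschitz, the map $(\widetilde{f}|_C)^{-1}$ is Lipschitz, hence $\dim_H \graph(f|_C) \ge \dim_H C$. So the entire difficulty is the upper bound, which must hold only generically; indeed for a fixed smooth or highly oscillatory $f$ the graph can have strictly larger Hausdorff dimension, so genericity is essential.

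**The upper bound via a Baire-category scheme.** The natural approach is to fix a sequence $s_m \searrow \dim_H C$ with $s_m > \dim_H C$ and show that for each $m$ the set
$$\mathcal{A}_m = \{ f \in C(K) : \mathcal{H}^{s_m}(\graph(f|_C)) = 0 \}$$
(or more precisely a slightly stronger approximate version of it) is co-meager; intersecting over $m$ gives $\dim_H \graph(f|_C) \le \dim_H C$ for the generic $f$. To make these sets open-and-dense-like, I would instead work with explicit covering quantities: for $\delta > 0$ and $\eta > 0$ define
$$\mathcal{A}_{m,\delta,\eta} = \{ f \in C(K) : \mathcal{H}^{s_m}_{\delta}(\graph(f|_C)) < \eta \},$$
which is open in $C(K)$ because a cover by finitely many sets of diameter $\le \delta$ witnessing $\mathcal{H}^{s_m}_\delta(\graph(f|_C)) < \eta$ persists under small sup-norm perturbations of $f$ (shrinking the vertical extent of each piece of the graph). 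The generic set is then $\bigcap_{m} \bigcap_{\eta} \bigcup_{\delta} \mathcal{A}_{m,\delta,\eta}$ arranged so that the relevant intersection is co-meager; the crux reduces to proving \emph{density} of the sets $\mathcal{A}_{m,\delta,\eta}$.

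**The density step — the main obstacle.** The heart of the matter, and the step I expect to be hardest, is showing density: given $f_0 \in C(K)$ and $\varepsilon > 0$, I must produce $f \in B(f_0,\varepsilon)$ whose graph over $C$ is efficiently coverable in dimension $s_m$. The idea is to exploit that $\dim_H C < s_m$, so $C$ itself admits a cover $\{U_i\}$ by sets with $\sum_i (\diam U_i)^{s_m}$ small. By uniform continuity of $f_0$ and compactness, refine this cover so that on each $U_i$ the oscillation of $f_0$ is tiny. Then, using Tietze's Extension Theorem, replace $f_0$ by a function $f$ that is \emph{locally constant at scale comparable to $\diam U_i$} on each piece of $C$ while staying within $\varepsilon$ of $f_0$ — for instance by flattening $f_0$ on a neighborhood of each $U_i$. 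Over each $U_i$ the graph of such $f$ then sits in a product set of diameter comparable to $\diam U_i$ in \emph{both} coordinates, so $\graph(f|_{U_i})$ is covered by a single set (or a bounded number of sets) of diameter $\lesssim \diam U_i$. Summing yields $\sum_i (\diam U_i)^{s_m}$ small, giving the desired bound on $\mathcal{H}^{s_m}_\delta(\graph(f|_C))$. The delicate points are: controlling the vertical oscillation of the perturbed $f$ uniformly on each cover element so that the graph genuinely has comparable diameters in both directions, and ensuring the Tietze extension over the neighborhoods in $K$ does not destroy the estimate or push $f$ outside $B(f_0,\varepsilon)$; handling the passage from the open-cover quantity to the $G_\delta$ structure needed for Baire category is then routine bookkeeping, while measurability of the relevant sets follows as in Lemma~\ref{borel}.
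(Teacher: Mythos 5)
Your overall scheme coincides with the paper's: the lower bound via the Lipschitz projection $(\widetilde{f})^{-1}$ is identical, and the upper bound is organized in both cases as a dense $G_\delta$ argument built from open sets of the form $\{f : \mathcal{H}^{s}_{\delta}(\graph(f|_C)) < \eta\}$. The difference, and the problem, lies in the density step, which you correctly identify as the crux but do not actually carry out. The flattening you propose --- perturb $f_0$ within $\varepsilon$ so that on each element $U_i$ of an efficient cover of $C$ the oscillation of $f$ is comparable to $\diam U_i$ --- is not achievable in general. A cover witnessing $\mathcal{H}^{s}_\delta(C)<\eta$ consists of overlapping sets of wildly varying diameters, and along a chain $U_{i_1},\dots,U_{i_M}$ of pairwise overlapping cover elements the conditions $\diam f(U_{i_j})\le c\,\diam U_{i_j}$ force $\diam f\bigl(\bigcup_j U_{i_j}\bigr)\le c\sum_j \diam U_{i_j}$. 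Already for $C=K=[0,1]$ and $s>1$, take $N$ intervals of length $1/N$ (so $\sum_i(\diam U_i)^s=N^{1-s}\to 0$): any $f$ with $\diam f(U_i)\le c/N$ for all $i$ then satisfies $\diam f([0,1])\le c$, so if $f_0$ has oscillation much larger than $c+2\varepsilon$ no such $f$ lies in $B(f_0,\varepsilon)$. Hence the implicit constant $c$ must be allowed to be large but \emph{uniform over all $i$ and over all covers at all scales} --- and that is precisely the statement that $f$ is Lipschitz with constant $c$. Your local Tietze surgery does not produce this uniformity.

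This is exactly how the paper closes the gap: it shows that the Lipschitz functions form a subalgebra of $C(K)$ that separates points and vanishes nowhere, hence is dense by the Stone--Weierstrass theorem; and for a Lipschitz $f$ the map $\widetilde{f}$ is Lipschitz with $\Lip(\widetilde{f})\le\Lip(f)+1$, so $\dim_{H}\widetilde{f}(C)\le\dim_{H}C$ outright, with no cover bookkeeping and no extension step. Your argument becomes correct once the local flattening is replaced by this global Lipschitz approximation; as written, the density step has a genuine gap.
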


\begin{proof}[Proof of Theorem \ref{graph1}]
First note that $\graph(f|_C) = \widetilde{f} (C)$.
For every $f\in C(K)$ the map $\widetilde{f}^{-1}$ is a
projection from $\widetilde{f}(C)$ onto $C$. Since  the Hausdorff
dimension cannot increase under a Lipschitz map,
$\dim_{H}\widetilde{f}(C)\geq \dim_{H} C$. For the opposite
direction it is enough to prove that
$$\mathcal{F}= \left\{f\in C(K): \dim_{H} \widetilde{f}(C)\leq \dim_{H} C\right\}$$
is a dense $G_{\delta}$ set in $C(K)$. We may assume $\dim_{H}
C<\infty$. First we show that $\mathcal{F}$ is a $G_{\delta}$ set. Let us
define for all $n\in \mathbb{N}^{+}$
$$\mathcal{F}_{n}=\left\{f\in C(K):\mathcal{H}^{\dim_{H}
C+1/n}_{1/n}\Big(\widetilde{f}(C)\Big)<1/n\right\}.$$
As $\widetilde{f}(C)$ is compact, it is straightforward that the $\mathcal{F}_{n}$'s are open and clearly
$\mathcal{F}=\bigcap_{n\in \mathbb{N}^{+}}\mathcal{F}_{n}$. Thus $\mathcal{F}$ is a $G_{\delta}$ set.

Finally, we show that $\mathcal{F}$ is dense in $C(K)$. If $f\in C(K)$ is
Lipschitz, then clearly $\widetilde{f}$ is Lipschitz with
$\Lip(\widetilde{f} )\leq \Lip(f)+1$, and hence $\dim_{H}
\widetilde{f}(C) \leq \dim_{H} C$. Therefore, it is enough to prove
that $\mathcal{G}=\{f\in C(K): f \textrm{ is Lipschitz}\}$ is dense in
$C(K)$. This fact is well-known but one can also see it directly,
since it is easy to show that $cf\in \mathcal{G}$, $f+g\in \mathcal{G}$ and $fg\in
\mathcal{G}$
 for all $f,g\in \mathcal{G}$ and $c\in \mathbb{R}$. Therefore,
$\mathcal{G}$ forms a subalgebra in $C(K)$. Finally, we may assume $\#K\geq
2$, and the Lipschitz functions $\{\varphi_{x_0}\}_{x_0\in K}$,
$\varphi_{x_0}\colon K \to \mathbb{R},$ $\varphi_{x_0}(x)=d_{K}(x_0,x)$
show that $\mathcal{G}$ separates points of $K$ and $\mathcal{G}$ vanishes at no
point of $K$. Hence the Stone-Weierstrass Theorem \cite[12.9.]{CA}
implies that $\mathcal{G}$ is dense. This completes the proof. \end{proof}

\begin{theorem}  \label{graph2} If $K$ is a compact metric space then
for the generic $f\in C(K)$
$$\dim_{tH} \graph (f)=\dim_{tH} K.$$  \end{theorem}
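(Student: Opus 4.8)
The plan is to prove the two inequalities $\dim_{tH}\graph(f)\le\dim_{tH}K$ and $\dim_{tH}\graph(f)\ge\dim_{tH}K$ separately for the generic $f\in C(K)$.

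**The upper bound.** Since $\widetilde{f}$ is a homeomorphism between $K$ and $\graph(f)$, I would first try to compare the topological Hausdorff dimensions of the two spaces using the Lipschitz-invariance property (Theorem \ref{prop}(iii)). The subtlety is that $\widetilde{f}^{-1}=\pr_K|_{\graph(f)}$ is Lipschitz (with constant $1$), but $\widetilde{f}$ itself is only Lipschitz when $f$ is. So for a \emph{Lipschitz} $f$ the map $\widetilde{f}^{-1}\colon\graph(f)\to K$ is a Lipschitz homeomorphism, whence Theorem \ref{prop}(iii) gives $\dim_{tH}K\le\dim_{tH}\graph(f)$ — the wrong direction for the upper bound. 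The cleaner route to the upper bound is to mimic the proof of Theorem \ref{graph1}: take a basis $\mathcal{U}$ of $K$ witnessing $\dim_{tH}K$, so that each $\dim_H\partial U\le\dim_{tH}K-1$. For Lipschitz $f$ the sets $\widetilde{f}(U)$ form a basis of $\graph(f)$ whose boundaries are Lipschitz images of the $\partial U$, hence of Hausdorff dimension at most $\dim_{tH}K-1$; this shows $\dim_{tH}\graph(f)\le\dim_{tH}K$ for every Lipschitz $f$. Since the Lipschitz functions are dense (Stone--Weierstrass, as in Theorem \ref{graph1}), I would then upgrade this to a generic statement by exhibiting a suitable dense $G_\delta$ set, arguing that the condition $\dim_{tH}\graph(f)\le\dim_{tH}K$ holds on a co-meager set.

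**The lower bound.** For the reverse inequality I would again exploit the projection $\pr_K\colon\graph(f)\to K$, which is $1$-Lipschitz and a homeomorphism, so Theorem \ref{prop}(iii) yields $\dim_{tH}\graph(f)\ge\dim_{tH}K$ for \emph{every} $f\in C(K)$, not merely the generic one. Thus the lower bound is automatic and costs nothing, since $\widetilde{f}^{-1}$ is a Lipschitz homeomorphism from $\graph(f)$ onto $K$ regardless of $f$.

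**The main obstacle.** The genuinely delicate part is the upper bound, and specifically converting the clean statement ``$\dim_{tH}\graph(f)\le\dim_{tH}K$ for Lipschitz $f$'' into a \emph{generic} statement, since (unlike in Theorem \ref{graph1}) the defining condition is about topological Hausdorff dimension rather than Hausdorff dimension, and it is not obvious that $\{f:\dim_{tH}\graph(f)\le\dim_{tH}K\}$ is $G_\delta$ or even Borel. I expect the resolution to route through Theorem \ref{graph1} itself: the key quantitative input from \cite{BBE} should be a formula bounding $\dim_{tH}$ of a product- or graph-type set in terms of the $\dim_H$ of level-set-like slices, so that controlling $\dim_H\graph(f|_C)$ for the relevant closed subsets $C$ (which Theorem \ref{graph1} does, generically and simultaneously over a countable family of $C$'s via Corollary \ref{catlem2}) forces $\dim_{tH}\graph(f)\le\dim_{tH}K$. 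Assembling the right countable family of subspaces and verifying that their generic graph-dimension bounds propagate to the boundaries of a basis of $\graph(f)$ is where the real work lies.
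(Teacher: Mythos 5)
Your overall strategy coincides with the paper's: the lower bound via the $1$-Lipschitz homeomorphism $\widetilde{f}^{-1}$ (the restriction of the projection onto $K$) together with Theorem \ref{prop}(iii) holds for \emph{every} $f$, exactly as in the paper, and the upper bound is obtained by feeding boundaries of a basis into Theorem \ref{graph1}. However, you stop short of the one step that actually finishes the upper bound, and you guess that it requires ``a formula bounding $\dim_{tH}$ of a product- or graph-type set in terms of the $\dim_H$ of level-set-like slices.'' No such formula is needed. The assembly is immediate from two facts already at hand: first, by Theorem \ref{min}, since $K$ is compact there is a basis $\mathcal{U}$ of $K$ with $\dim_{H}\partial U\leq \dim_{tH}K-1$ for \emph{every} $U\in\mathcal{U}$ (the infimum is attained, so one does not need to juggle a sequence of bases for $d\searrow\dim_{tH}K$), and by second countability $\mathcal{U}$ may be taken countable; second, $\widetilde{f}$ is a homeomorphism for every $f\in C(K)$, not only for Lipschitz $f$, so $\left\{\widetilde{f}(U):U\in\mathcal{U}\right\}$ is a basis of $\graph(f)$ and $\partial\widetilde{f}(U)=\widetilde{f}(\partial U)=\graph\left(f|_{\partial U}\right)$. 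Applying Theorem \ref{graph1} with $C=\partial U$ for each of the countably many $U\in\mathcal{U}$ and intersecting the resulting co-meager sets yields a co-meager $\mathcal{F}$ on which $\dim_{H}\partial\widetilde{f}(U)=\dim_{H}\partial U\leq\dim_{tH}K-1$ for all $U\in\mathcal{U}$; the definition of $\dim_{tH}$ then gives $\dim_{tH}\graph(f)\leq\dim_{tH}K$ directly. In particular, your detour through Lipschitz functions and your worry about whether $\{f:\dim_{tH}\graph(f)\leq\dim_{tH}K\}$ is Borel or $G_\delta$ are both unnecessary: genericity enters only through Theorem \ref{graph1}, one boundary at a time, and no Baire-property analysis of a $\dim_{tH}$-defined set is ever required.
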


\begin{proof} For every $f\in C(K)$ the map $\widetilde{f}^{-1}$ is an injective projection from
$\graph(f)$ onto $K$, hence it is a Lipschitz homeomorphism. Thus
Theorem \ref{prop} implies that $\dim_{tH} \graph(f)\geq \dim_{tH}
K$. For the opposite direction choose a basis $\mathcal{U}$ of $K$ such that
$\dim_{H} \partial U\leq \dim_{tH} K-1$ for all $U\in \mathcal{U}$, we can
do this by Theorem \ref{min}. We may assume that $\mathcal{U}$ is countable.
Suppose $U\in \mathcal{U}$ is arbitrary. By applying Theorem \ref{graph1}
for $C=\partial U$ we infer that there exists a co-meager set
$\mathcal{F}_{U}\subseteq C(K)$ such that for all $f\in \mathcal{F}_{U}$ we have
$\dim_{H} \widetilde{f}(\partial U)=\dim_{H} (\partial U)\leq
\dim_{tH} K-1$. The basis $\mathcal{U}$ is countable, and hence
 $\mathcal{F}=\bigcap_{U\in
\mathcal{U}} \mathcal{F}_{U}$ is co-meager in $C(K)$. Assume $f\in \mathcal{F}$, it is
enough to prove that $\dim_{tH} \graph(f)\leq \dim_{tH} K$. Since
$\widetilde{f}$ is homeomorphism we obtain that
$\mathcal{V}=\left\{\widetilde{f}(U): U\in \mathcal{U}\right\}$ is a basis of
$\graph(f)$ and $\partial \widetilde{f}(U)=\widetilde{f}(\partial
U)$ for all $U\in \mathcal{U}$. That is,
$$\dim_{H}\partial
V=\dim_{H}\partial \widetilde{f}(U)=\dim_{H} \widetilde{f}(\partial
U)=\dim_{H}\partial U\leq \dim_{tH} K-1$$
for all $V=\widetilde{f}(U)\in \mathcal{V}$.
Thus $\dim_{tH} \graph(f)\leq \dim_{tH} K$, and this completes the proof.
\end{proof}

\noindent \textbf{Acknowledgment.} We are indebted to an anonymous
referee for his valuable comments and for suggesting Example \ref{example}.


\begin{thebibliography}{99}

\bibitem{BBE} R. Balka, Z. Buczolich, and M. Elekes, A new fractal dimension: The topological Hausdorff
dimension, submitted.


\bibitem{CA} N. L. Carothers, \textit{Real analysis}, Cambridge
University Press, 2000.


\bibitem{Eng} R. Engelking, \textit{Dimension theory}, North-Holland
Publishing Company, 1978.


\bibitem{RE} R. Engelking, \textit{General topology}, Revised and
completed edition, Heldermann Verlag, 1989.


\bibitem{F} K. Falconer, \textit{Fractal geometry: Mathematical
foundations and applications}, Second Edition, John Wiley \& Sons,
2003.


\bibitem{HW} W. Hurewicz and H. Wallman,
\textit{Dimension theory}, Princeton University Press, 1948.

\bibitem{Ke} A. S. Kechris, \textit{Classical descriptive set
theory}, Springer-Verlag, 1995.



\bibitem{BK} B. Kirchheim, Hausdorff measure and level sets of typical continuous mappings
in Euclidean spaces, \textit{Trans. Amer. Math. Soc.}, \textbf{347}, (1995),
no. 5, 1763--1777.

\bibitem{Ma} P. Mattila, \textit{Geometry of sets and measures in
Euclidean spaces}, Cambridge Studies in Advanced Mathematics No.~44,
Cambridge University Press, 1995.


\bibitem{MM} P. Mattila and R. D. Mauldin, Measure
and dimension functions: Measurability and densities, \textit{Math. Proc.
Cambridge Phil. Soc.}, \textbf{121}, (1997), 81--100.

\bibitem{MW} R. D. Mauldin and S. C. Williams, On the Hausdorff dimension of some
graphs, \textit{Trans. Amer. Math. Soc.}, \textbf{298}, (1986), no. 2,
793--803.


\end{thebibliography}
\end{document}